\numberwithin{equation}{section}
\numberwithin{figure}{section}
\newcommand{\R}{\mathbb R}
\newcommand{\N}{\mathbb N}
\newcommand{\Q}{\mathbb Q}
\newcommand{\Z}{\mathbb Z}
\newcommand{\betrag}[1]{\left|#1\right|}
\DeclareMathOperator{\trop}{B}
\newcommand{\mn}{\mathcal{M}_{n}}
\newcommand{\mnp}{\mathcal{M}_{n+1}}
\DeclareMathOperator{\ft}{ft}
\DeclareMathOperator{\Zy}{Z}
\DeclareMathOperator{\rank}{r}
\newcommand{\mf}{\mathcal{B}}
\newcommand{\F}{\mathcal{F}}
\DeclareMathOperator{\Co}{C}
\DeclareMathOperator{\Sn}{S}
\newcommand{\curlyx}{\mathcal X}
\newcommand{\curlyy}{\mathcal Y}
\newcommand{\curlyb}{\mathcal B}
\newcommand{\curlyt}{\mathcal T}
\newcommand{\id}{\textnormal{id}}
\newcommand{\dist}{\textnormal{dist}}
\newcommand{\conv}{\textnormal{conv}}
\newcommand{\Star}{\textnormal{Star}}
\newcommand{\relint}{\textnormal{rel int}}
\newcommand{\gnrt}[1]{\left\langle #1 \right\rangle} 
\newcommand{\arxiv}[1]{%
  \href{http://arxiv.org/abs/#1}{arxiv:#1}%
}
\newcommand{\newl}{\indent\par}
\newtheorem {theorem}{Theorem}[section]
\newtheorem {proposition}[theorem]{Proposition}
\newtheorem {lemma}[theorem]{Lemma}
\newtheorem {corollary}[theorem]{Corollary}
\theoremstyle {definition}
\newtheorem {definition}[theorem]{Definition}
\newtheorem {notation}[theorem]{Notation}
\newtheorem {example}[theorem]{Example}
\theoremstyle {remark}
\newtheorem {remark}[theorem]{Remark}
\newtheorem*{acknowledgement}{Acknowledgement}
\renewcommand{\Im}{\textnormal{Im}}
\begin{document}
\title{Universal families of rational tropical curves}
\author {Georges Francois}
\address {Georges Francois, Fachbereich Mathematik, Technische Universit\"{a}t
  Kaiserslautern, Postfach 3049, 67653 Kaiserslautern, Germany}
\email {gfrancois@email.lu}
\author {Simon Hampe}
\address {Simon Hampe, Fachbereich Mathematik, Technische Universit\"{a}t
  Kaiserslautern, Postfach 3049, 67653 Kaiserslautern, Germany}
\email {hampe@mathematik.uni-kl.de}
\begin{abstract} We introduce the notion of families of $n$-marked smooth rational tropical curves over smooth tropical varieties and establish a one-to-one correspondence between (equivalence classes of) these families and morphisms
from smooth tropical varieties into the moduli space of $n$-marked abstract rational tropical curves $\mn$. \end{abstract}
\thanks{}
\subjclass{Primary 14T05, Secondary 14D22}
\keywords{Tropical geometry, universal family, rational curves, moduli space}
\maketitle

\section{Introduction}

The moduli spaces $\mn$ of $n$-marked abstract rational tropical curves have been well known for several years. An explicit description of the combinatorial structure of $\mn$ and its embedding as a tropical fan can be found in \cite{SS04} and \cite{GKM}. However, so far the moduli spaces $\mn$ have only been parameter spaces, i.e.\ in bijection to the set of tropical curves. In classical geometry or category theory, moduli spaces also carry a universal family which induces all possible families via pull-back along a unique morphism into $\mn$. This paper finds a tropical counterpart by giving a suitable definition of a family of tropical curves and proving that the forgetful map $\ft: \mnp \to \mn$ is then indeed a universal family.

After briefly recalling some known facts in section \ref{section_prelim}, we study the construction of a tropical fibre product in the case where all involved varieties are smooth. For this we define the notion of a locally surjective morphism which might be seen as a tropical analogue of flatness. We conclude that when one of the morphisms is locally surjective, the set-theoretic fibre product can indeed be considered as a tropical fibre product (theorem \ref{fibreproduct}). 

In section \ref{section_families} we define families of rational curves. We prove that the forgetful map of the moduli spaces $\mn$ can be made into such a family by constructing appropriate markings (proposition \ref{markingmn}). Finally, we use the fibre product of the previous section to see that each morphism into $\mn$ induces a family of curves (corollary \ref{families_cor_pullback}).

In section \ref{section_fibre_morphism} we establish the inverse operation, namely we prove that each family of $n$-marked curves also gives rise to a morphism into $\mn$. This leads to our main theorem  \ref{maintheorem} which gives a bijection between equivalence classes of families of $n$-marked curves over a smooth variety $B$ and morphisms $B\rightarrow\mn$.

In the last section we prove that there is a bijective pseudo-morphism, a piecewise linear map respecting the balancing condition, between two equivalent families. In case that the domain of one of the families is a smooth variety, this map is even an isomorphism (theorem \ref{equivalencethm}).

We would like to thank our advisor Andreas Gathmann for many helpful discussions and comments and the anonymous referee for many constructive comments on how to improve and generalise this paper.

\section{Preliminaries and notations}\label{section_prelim}
In this section we quickly review some results on tropical intersection theory and the moduli space $\mn$ of $n$-marked abstract rational tropical curves. 

\subsection{Tropical intersection theory:} A weighted polyhedral complex $\curlyx$ in a vector space $V=\R\otimes\Lambda$ associated to a lattice $\Lambda$ is a pure-dimensional rational polyhedral complex in $V$ all of whose maximal cells $\sigma\in\curlyx$ are equipped with an integer weight $\omega_{\curlyx}(\sigma)$. Each cell $\sigma\in\curlyx$ induces a linear subspace $V_{\sigma}$ of $V$ generated by differences of vectors in $\sigma$ and a sublattice $\Lambda_{\sigma}:=V_{\sigma}\cap \Lambda$ of $\Lambda$. If $\tau<\sigma$ is a codimension $1$ face of $\sigma$, then $u_{\sigma/\tau}$ denotes the (primitive) normal vector of $\sigma$ relative to $\tau$. 
A tropical cycle $X$ in $V$ is the equivalence class modulo refinement of a weighted polyhedral complex $\curlyx$ in $V$ that satisfies the balancing condition for each codimension $1$ cell $\tau\in\curlyx^{(\dim \curlyx -1)}:$
\[ \sum_{\sigma\in\curlyx: \sigma>\tau} \omega_{\curlyx}(\sigma)\cdot u_{\sigma/\tau} =0 \in V/V_{\tau}.\]  A tropical variety is a tropical cycle which has only positive weights. A representative $\curlyx$ of a tropical cycle $X$ is called a polyhedral structure of $X$. If $X$ has a polyhedral structure $\curlyx$ which is a fan, then we call $X$ a fan cycle and $\curlyx$ a fan structure of $X$. The support $|X|$ of a cycle $X$ is the union of all maximal cells of non-zero weight in a polyhedral structure of $X$. A tropical cycle $Y$ is a subcycle of a cycle $X$ if $|Y|\subseteq |X|$. The additive group of all $d$-dimensional subcycles of $X$ is denoted $\Zy_d(X)$, where the sum of two cycles is obtained by taking the union of polyhedral complexes and adding weights for appropriate polyhedral structures. A cycle $X$ is called irreducible if $\Zy_{\dim X}(X)=\Z\cdot X$. 
The star $\Star_X(p)$ of the cycle $X$ around the point $p$ is the tropical cycle whose support consists of vectors $v\in V$ such that $p+\epsilon v$ is in $X$ for small (positive) $\epsilon$ and whose weights are induced by the weights of $X$. If $\curlyx,\curlyx'$ are polyhedral structures of two cycles $X,X'$, then the crossproduct $X\times X'$ is given by the polyhedral structure $\curlyx\times\curlyx'$ with weight function $\omega_{\curlyx\times\curlyx'}(\sigma\times\sigma')=\omega_{\curlyx}(\sigma)\cdot\omega_{\curlyx'}(\sigma')$.
More details can be found in \cite{AR}*{section 2} which covers fan cycles, \cite{AR}*{section 5} which introduces abstract cycles (which are more general than cycles in vector spaces), and \cite{disshannes}*{section 1.1 and 1.2} whose notation we follow in this article.

A morphism $f:X\rightarrow Y$ of tropical cycles is a map from $|X|$ to $|Y|$ which is locally integer affine linear; that means it is locally the sum of an integer linear function and a translation by a real vector. One says that $f$ respects the weights if for suitable polyhedral structures $\curlyx,\curlyy$ and for of all maximal cells $\sigma\in\curlyx$ the weights of $\sigma$ and $f(\sigma)$ are equal. The morphism $f$ is an isomorphism if it respects the weights and has an inverse which is also a morphism. The linear part of the affine linear function that describes a morphism $f:X\rightarrow Y$ around a point $p$ in $X$ gives a morphism $\lambda_{f,p}: \Star_X(p) \to \Star_Y(f(p))$ between the stars.

A rational function on a tropical cycle $X$ is a piecewise integer affine linear function $\varphi:|X|\rightarrow \R$; that means there is a polyhedral structure $\curlyx$ of $X$ such that for all $\sigma\in\curlyx$ the restriction of $\varphi$ to $\sigma$ is the sum of an integer linear form $\varphi_{\sigma}\in\Lambda_{\sigma}^{\vee}$ and a real constant. The intersection product $\varphi\cdot X\in\Zy_{\dim X-1}(X)$ is given by the polyhedral structure $\varphi\cdot\curlyx:=\curlyx\setminus \curlyx^{(\dim X)}$ with the weight function 
\[
\curlyx^{(\dim X-1)}\rightarrow \Z, \ \ \ \tau \mapsto \sum_{\sigma\in\curlyx: \sigma>\tau} \omega_{\curlyx}(\sigma)\cdot \varphi_{\sigma}(v_{\sigma/\tau})-\varphi_{\tau}\left(\sum_{\sigma\in\curlyx: \sigma>\tau} \omega_{\curlyx}(\sigma)\cdot v_{\sigma/\tau}\right),
\]
where the $v_{\sigma/\tau}\in V$ are representatives of the normal vectors $u_{\sigma/\tau}$ (\cite[section 3]{AR},\cite[section 1.2]{disshannes}). Note that the support $|\varphi\cdot X|$ is contained in the domain of non-linearity $|\varphi|$ of $\varphi$. The pull-back of a rational function $\varphi$ on $Y$ along a morphism $f:X\rightarrow Y$ is defined as $f^*\varphi:=\varphi\circ f$ and is a rational function on $X$. If $C$ is a subcycle of $X$, then the projection formula states that $$\varphi\cdot f_* C=f_* f^*\varphi \cdot C,$$
where $f_*: \Zy_d(X)\rightarrow \Zy_d(Y)$ denotes the push-forward of cycles discussed in \cite[construction 2.24]{GKM}, \cite[sections 4 and 7]{AR} and \cite[section 1.3]{disshannes}.

\subsection{Matroid varieties:}Matroid varieties $\trop(M)$ which have been studied in \cites{stusolve, ak, troplin, mapoly} constitute an important class of tropical varieties. They have a canonical fan structure $\mf(M)$ which consists of cones \[\langle\F\rangle:=\left\{\sum_{i=1}^p \lambda_i V_{F_i}:\lambda_1,\ldots,\lambda_{p-1}\geq 0, \lambda_p\in\R\right\}\]
corresponding to chains $\F=(\emptyset\subsetneq F_1\subsetneq\ldots\subsetneq\F_{p-1}\subsetneq F_p=E(M))$ of flats of a (loopfree) matroid $M$ having ground set $E(M):=[n]$. Here $V_F=-\sum_{i\in F} e_i$, where $e_1,\ldots,e_n$ form the standard basis of $\R^n$ and all maximal cones of $\mf(M)$ have trivial weight $1$. The fan structure $\mf(M)$ was introduced in \cite{ak} and is often called the fine subdivision. Note that matroid varieties naturally come with a lineality space containing $\R\cdot (1,\ldots,1)$.

A tropical variety $X$ is smooth if it is locally a matroid variety modulo lineality space $\trop(M)/L$ (cf.\ \cite[section 6]{francoisrau}). This means that for each point $p$ in $X$, the star $\Star_X(p)$ is isomorphic to a matroid variety modulo lineality space. Crossproducts and stars of smooth varieties are again smooth varieties. Recall that $L^n_1$ denotes the curve in $\R^n$ which consists of edges $\R_{\leq 0}\cdot e_i,\ i=0,1,\ldots,n$ (all having trivial weight $1$), where $e_1,\ldots,e_n$ form the standard basis of $\R^n$ and $e_0=-(e_1+\ldots+e_n)$. Then smooth curves are exactly the curves which are locally isomorphic to $L^n_1$ for some $n$.

A main property of smooth varieties which will be crucial in the next section is that they admit an intersection product of cycles having the expected properties (\cite[theorem 6.4]{francoisrau} and \cite[section 3]{shaw}). Furthermore, if $f:X\rightarrow Y$ is a morphism of smooth varieties, then we can pull back any cycle $C\in\Zy_{\dim Y-r}(Y)$ to obtain a cycle $f^*(C)\in\Zy_{\dim X-r}( X)$ \cite[definition 8.1]{francoisrau}: More concretely the pull-back is given by
\[ f^*(C):= \pi_*(\Gamma_f\cdot (X\times C)),\]
where $\pi:X\times Y\rightarrow X$ is the projection to $X$, the graph $\Gamma_f\in\Zy_{\dim X}(X\times Y)$ is the push-forward of $X$ along the morphism $x\mapsto (x,f(x))$ and the intersection product is computed on the smooth variety $X\times Y$. 

In the case where only $Y$ is smooth, we can still pull back each point $p$ in $Y$ along $f$ \cite[remark 4.11]{francois}: The smoothness of $Y$ implies that there is a unique cocycle $\varphi\in\Co^{\dim Y}(Y)$ such that $\varphi\cdot Y= p$; therefore, one can define the pull-back of $p$
as $f^*p:=f^*\varphi \cdot X$. Cocycles are locally given by sums of products of rational functions; we can thus use the above formula for rational functions to compute intersection products of cocycles with tropical cycles \cite[definitions 3.13 and 3.24, proposition 3.28]{francois}. The ability to pull back points along morphisms with smooth target cycles will be an essential ingredient to define families of curves in definition \ref{family}.

\subsection{Moduli spaces:}In \cite[section 3]{GKM} the authors map an $n$-marked rational curve to the vector whose entries are pairwise distances of its leaves and use this to give the moduli space $\mn$ of $n$-marked abstract rational tropical curves the structure of a tropical fan of dimension $n-3$ in $Q_n:=\R^{\binom{n}{2}}/\Im(\phi_n)$, where $\phi_n$ maps $x\in\R^n$ to $(x_i+x_j)_{i<j}$. The edges of $\mn$ are generated by vectors $v_{I|n}:=v_I$ (with $I\subsetneq [n], 1<\betrag{I}<n-1$) corresponding to abstract curves with exactly one bounded edge of length $1$ separating the leaves with labels in $I$ from the leaves with labels in the complement of $I$. Furthermore, the relative interior of each $k$-dimensional cone of $\mn$ corresponds to curves with exactly $k$ bounded edges, whose combinatorial type (i.e.\ the graph without the metric) is the same. The forgetful map $\ft_0:=\ft:\mnp\rightarrow\mn$ forgetting the $0$-th marked end is the morphism of tropical fan cycles induced by the projection $\pi:\R^{\binom{n+1}{2}}\rightarrow\R^{\binom{n}{2}}$ \cite[proposition 3.12]{GKM}. Note that, in order to simplify the notations, we equip $\mnp$ with the markings $0,1,\ldots,n$, when we consider the forgetful map.

It was shown in \cite[section 4]{ak} and \cite[example 7.2]{francoisrau} that $\mn$ is even isomorphic to a matroid variety modulo lineality space (this was already hinted at in \cite{kapranov},\cite{SS04}, see also \cite[theorem 5.5.]{tevelev})
and thus admits an intersection product of cycles: if $\trop(K_{n-1})$ denotes the matroid variety corresponding to the matroid $M(K_{n-1})$ associated to the complete graph $K_{n-1}$ on $n-1$ vertices, then $\mn$ is isomorphic to $\trop(K_{n-1})/L$, with $L=\R\cdot(1,\ldots,1)$. Note that the ground set of $M(K_{n-1})$ is the set of edges of $K_{n-1}$, whereas its flats are exactly the sets of edges of vertex-disjoint unions of complete subgraphs of $K_{n-1}$. Concretely, the isomorphism is the restriction to $\trop(K_{n-1})/L$ of the isomorphism
\begin{eqnarray*}
f:  \R^{\binom{n-1}{2}}/ L &\rightarrow & \R^{\binom{n}{2}}/\Im(\phi_n) \\  (a_{i,j})_{i<j} &\mapsto & (b_{i,j})_{i<j} \ , \ \ \ \text{ with } b_{i,j}=\begin{cases} 0, & \text{ if } n\in\{i,j\} \\ 2\cdot a_{i,j}, & \text{ else}\end{cases}.
\end{eqnarray*} 
In this setting the forgetful map is thus induced by the projection $\pi:\R^{\binom{n}{2}}\rightarrow\R^{\binom{n-1}{2}}$.

\section{Tropical fibre products}\label{section_fibre_product}

The aim of this section is to construct a tropical fibre product in the case that all involved cycles are smooth and one of the morphisms is locally surjective: 

\begin{definition}
A morphism $f:X\rightarrow Y$ of tropical varieties is called \emph{locally surjective} if for every point $p$ in $X$, the induced linear map
$$\lambda_{f,p}: \Star_X(p) \to \Star_Y(f(p))$$
is surjective.
\end{definition}

\begin{lemma}\label{flatlemma}
 Let $f: X \to Y$ be a locally surjective morphism. Then the following holds:
\begin{itemize}
 \item Let $\mathcal{X},\mathcal{Y}$ be polyhedral structures of $X$ and $Y$ such that $f(\tau) \in \mathcal{Y}$ for all $\tau \in \mathcal{X}$ (cf.\ \cite[lemma 1.3.4]{disshannes}). For $\tau \in \mathcal{X}$ we have
$$f(U(\tau)) = U(f(\tau)), \text{ where } U(\tau):=\bigcup_{\sigma\in\mathcal{X}: \sigma>\tau} \relint(\sigma).$$ In particular, $f$ is an open map, i.e.\ maps open sets to open sets.
 \item  Let $\varphi$ be a rational function on $Y$. Then the domain of non-linearity of $\varphi \circ f$ is equal to the preimage of the domain of non-linearity of $\varphi$, i.e.\  $$\betrag{\varphi \circ f} = f^{-1}(\betrag{\varphi}).$$
\end{itemize}
\begin{proof}
The first part obviously follows from the local surjectivity of $f$. Note that the set of all possible $U(\tau)$ for all possible polyhedral structures of $X$ forms a topological basis of the standard euclidean topology on $\betrag{X}$. For the second part it suffices to prove that $\varphi$ is locally linear at $p \in Y$ if and only if $\varphi \circ f$ is locally linear at some point $q \in f^{-1}(p)$. But this is already clear from the first part.
\end{proof}
\end{lemma}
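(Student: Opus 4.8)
The plan is to prove the two bullet points separately and to reduce the second to the first. For the first part I would establish the set equality $f(U(\tau))=U(f(\tau))$ by two inclusions and then deduce openness. For the second part I would first reduce the equality of the two non-linearity loci to the pointwise statement that $\varphi\circ f$ is locally affine linear at $q$ if and only if $\varphi$ is locally affine linear at $f(q)$, and then read this equivalence off from the first part.

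For the inclusion $f(U(\tau))\subseteq U(f(\tau))$ no surjectivity is needed: if $x\in\relint(\sigma)$ with $\sigma>\tau$, then, since $f$ is affine on $\sigma$ and $f(\sigma)\in\mathcal{Y}$, the image $f(x)$ lies in $\relint(f(\sigma))$ (an affine map carries the relative interior of a polyhedron onto the relative interior of its image); moreover $f(\tau)\subseteq f(\sigma)$ are both cells of $\mathcal{Y}$, so $f(\tau)$ is a face of $f(\sigma)$, i.e.\ $f(\sigma)>f(\tau)$. The reverse inclusion is where local surjectivity enters. Fixing $p\in\relint(\tau)$, so that $f(p)\in\relint(f(\tau))$, let $\eta\in\mathcal{Y}$ with $\eta>f(\tau)$. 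Surjectivity of $\lambda_{f,p}$ produces a preimage of a point in the relative interior of the cone of $\eta$ in $\Star_Y(f(p))$; this preimage lies in the relative interior of some cone of $\Star_X(p)$, corresponding to a cell $\sigma>\tau$, and $\lambda_{f,p}$ sends that relative interior onto $\relint(\lambda_{f,p}(C_\sigma))$, the relative interior of the cone of $f(\sigma)$. Since distinct cones of a fan have disjoint relative interiors, this forces $f(\sigma)=\eta$, whence $\relint(\eta)=f(\relint(\sigma))\subseteq f(U(\tau))$; taking the union over all such $\eta$ gives $U(f(\tau))\subseteq f(U(\tau))$. Openness then follows because the sets $U(\tau)$, taken over all polyhedral structures of $X$, form a basis of the topology of $\betrag{X}$, while each $U(f(\tau))$ is open in $\betrag{Y}$.

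For the second part I would use that a piecewise affine function is locally affine linear at a point exactly when it is affine linear on the whole open star of the smallest cell containing that point. Choosing $\mathcal{Y}$ adapted to $\varphi$ and $\mathcal{X}$ with $f(\tau)\in\mathcal{Y}$ for all $\tau$ as above, and writing $q\in\relint(\tau)$ and $p:=f(q)\in\relint(f(\tau))$, the key observation is that $f$ is genuinely affine (a single affine map) on $U(\tau)$: it is locally affine by definition of a morphism, and two affine maps agreeing near $q$ on a cell agree on that whole cell, so the cellwise affine pieces glue across $U(\tau)$. By the first part this single affine map surjects $U(\tau)$ onto $U(f(\tau))$. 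Hence $\varphi$ is affine on $U(f(\tau))$ if and only if $\varphi\circ f$ is affine on $U(\tau)$: one direction is immediate because a composition of affine maps is affine, and the other is the descent of affineness along a surjective affine map. This yields the pointwise equivalence $q\notin\betrag{\varphi\circ f}\iff f(q)\notin\betrag{\varphi}$ for every $q$, i.e.\ $\betrag{\varphi\circ f}=f^{-1}(\betrag{\varphi})$.

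I expect the main obstacle to be exactly this last descent step: showing that if $\varphi\circ f$ is affine on $U(\tau)$ then $\varphi$ does not bend on $U(f(\tau))$. This is the only place where surjectivity of $f|_{U(\tau)}$ onto $U(f(\tau))$ — hence local surjectivity — is indispensable, since any bending of $\varphi$ across a wall of $U(f(\tau))$ must be witnessed upstairs by a bending of $\varphi\circ f$, and surjectivity guarantees that the offending wall, together with the cells on both of its sides, is actually reached by $f$. Without local surjectivity $f(U(\tau))$ could be a proper subset of $U(f(\tau))$, and $\varphi$ could bend in the unreached part while $\varphi\circ f$ stays affine, so the inclusion $f^{-1}(\betrag{\varphi})\subseteq\betrag{\varphi\circ f}$ would fail.
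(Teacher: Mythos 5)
Your first bullet is worked out correctly and follows the same route the paper (very tersely) takes: the inclusion $f(U(\tau))\subseteq U(f(\tau))$ from affineness of $f$ on cells, the reverse inclusion from surjectivity of $\lambda_{f,p}$ together with the fact that distinct cones of the star have disjoint relative interiors, and openness from the basis property of the sets $U(\tau)$. Your reduction of the second bullet to a pointwise equivalence, the observation that a morphism is given by a \emph{single} affine map $A$ on all of $U(\tau)$ (so that $\betrag{\varphi\circ f}\subseteq f^{-1}(\betrag{\varphi})$ is immediate), and your identification of the reverse inclusion as the crux are also sound and more detailed than the paper's proof.

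However, your justification of that crux is a genuine gap: ``descent of affineness along a surjective affine map'' is false at the level of generality you invoke it, and the mechanism you describe --- that the offending wall together with the cells on both of its sides is reached by $f$ --- does not suffice. Take $X=\R_{\geq 0}e_1\cup\R_{\geq 0}e_2\subseteq\R^2$ (both rays with weight $1$), $Y=\R$, $f(x_1,x_2)=x_1-x_2$ and $\varphi=\max(y,0)$. Then $f$ is locally surjective, the wall $\{0\}$ and both adjacent cells $\R_{\geq 0}$, $\R_{\leq 0}$ are reached (by the vertex and the two rays, which are even adjacent upstairs), and $f$ is a single affine map surjecting $U(\tau)$ onto $U(f(\tau))$ --- yet $\varphi\circ f$ equals the globally affine function $x_1$ on $\betrag{X}$, while $f^{-1}(\betrag{\varphi})=\{0\}$. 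The bend of $\varphi$ fails to be witnessed upstairs because the preimage directions $e_1,e_2$ of the two sides are linearly independent, so the composition can ``unfold''. This $X$ is of course not balanced, hence not a counterexample to the lemma itself --- but that is exactly the point: the inclusion $f^{-1}(\betrag{\varphi})\subseteq\betrag{\varphi\circ f}$ genuinely needs the balancing condition, which your argument (like, admittedly, the paper's two-line proof) never uses. In the one-dimensional local picture the repair is this: after translating $q$ to the origin, the star consists of rays $\R_{\geq 0}v_i$ with weights $\omega_i$ and $\sum_i\omega_i v_i=0$; local surjectivity forces both $\{i:A(v_i)>0\}$ and $\{i:A(v_i)<0\}$ to be nonempty (rays with $A(v_i)=0$ are excluded by local surjectivity at their interior points), and if a single linear $G$ satisfied $G=A$ on the positive rays and $G=0$ on the negative ones, then $0=G\bigl(\sum_i\omega_i v_i\bigr)=\sum_{A(v_i)>0}\omega_i A(v_i)>0$, a contradiction; the general case reduces to this by localizing at a codimension-one cell of $X$ mapping onto the wall. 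Without some such use of balancing, the descent step in your proof would simply fail.
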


\begin{lemma}
\label{settheoreticfibre}
Let $Y$ be a smooth variety and let $f:X\rightarrow Y$ be a locally surjective morphism. Then the intersection-theoretic fibre over each point $y$ in $Y$ has only positive weights and its support agrees with the set-theoretic fibre, that means  $$|f^*(y)|=f^{-1}\{y\}.$$ 
\end{lemma}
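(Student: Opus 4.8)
The plan is to compute directly with the definition $f^*(y)=f^*\varphi\cdot X$, where $\varphi\in\Co^{\dim Y}(Y)$ is the unique cocycle with $\varphi\cdot Y=y$, and to split the assertion into three parts: the inclusion $\betrag{f^*(y)}\subseteq f^{-1}\{y\}$ of supports, the reverse inclusion, and positivity of the weights. The last two I would treat together by localising at a point of the fibre and reducing to a purely fan-theoretic statement about surjective morphisms.

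For the inclusion $\betrag{f^*(y)}\subseteq f^{-1}\{y\}$ I would first determine the domain of non-linearity of $\varphi$ itself. Since $Y$ is smooth, the assignment sending a top-codimensional cocycle $\psi$ to the $0$-cycle $\psi\cdot Y$ is injective; restricting to a small neighbourhood of any point $q\neq y$ the class $\varphi\cdot Y=y$ vanishes, so $\varphi$ agrees there with the zero cocycle and is in particular locally linear. Hence $\betrag{\varphi}\subseteq\{y\}$. Writing $\varphi$ locally as a sum of products of rational functions and applying the second part of Lemma \ref{flatlemma} to each factor, the domain of non-linearity of $f^*\varphi=\varphi\circ f$ is contained in $f^{-1}\{y\}$. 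As the support of an intersection product lies in the domain of non-linearity, this yields $\betrag{f^*(y)}=\betrag{f^*\varphi\cdot X}\subseteq f^{-1}\{y\}$.

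For the reverse inclusion and positivity I would pass to stars, using that both intersection with cocycles and pull-back are local operations compatible with $\Star$. For $p\in f^{-1}\{y\}$ this gives $\Star_{f^*(y)}(p)=(\lambda_{f,p})^*(0)$, the pull-back of the vertex $0$ of the smooth fan $\Star_Y(y)$ along the linear map $\lambda_{f,p}\colon\Star_X(p)\to\Star_Y(y)$, which is surjective by local surjectivity. First I would record that the set-theoretic fibre is pure of codimension $\dim Y$: by the openness of $f$ from Lemma \ref{flatlemma}, every maximal cone of $\Star_X(p)$ maps onto a full-dimensional cone of $\Star_Y(y)$, so its intersection with $\lambda_{f,p}^{-1}(0)$ is a face of codimension $\dim Y$, and purity of $\Star_X(p)$ propagates to the fibre. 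Combined with the inclusion of the previous paragraph, it then suffices to show that $f^*(y)$ carries a \emph{positive} weight on every maximal cone of $f^{-1}\{y\}$.

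To obtain positivity I would cut the vertex of the smooth fan $B:=\Star_Y(y)$ out by \emph{convex} rational functions. Being a matroid variety modulo lineality, $B$ admits rational functions $\psi_1,\dots,\psi_{\dim Y}$, each a maximum of integer linear forms, with $\psi_1\cdots\psi_{\dim Y}\cdot B=0$; by injectivity of the cocycle-cycle correspondence this product represents the vertex cocycle, so $\Star_{f^*(y)}(p)=(\psi_1\circ\lambda_{f,p})\cdots(\psi_{\dim Y}\circ\lambda_{f,p})\cdot\Star_X(p)$. Each $\psi_i\circ\lambda_{f,p}$ is again convex, and intersecting an effective cycle with a convex rational function produces only non-negative weights, since in the weight formula every corner of a convex function contributes a non-negative kink; applying this successively keeps the cycle effective and, by the dimension count above, nonzero on the whole fibre. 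I expect this last step --- verifying that local surjectivity really forbids cancellation, i.e.\ that suitable convex cutting functions exist, that convexity is preserved under pull-back along $\lambda_{f,p}$, and that the successive intersections never annihilate a maximal fibre cone --- to be the main obstacle, and the place where the ``flatness'' character of local surjectivity is genuinely used.
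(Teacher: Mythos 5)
Your overall strategy coincides with the paper's: localise at a point of the fibre, reduce to a surjective linear map of fans with target a matroid variety modulo lineality, cut the vertex out by convex rational functions (their existence is precisely lemma \ref{max}, applied after decomposing $Y$ into a cross product of matroid varieties modulo one-dimensional lineality spaces), pull back along $f$ and exploit convexity. But the step you yourself single out as the main obstacle is a genuine gap, and the fallback you offer does not close it. Intersecting an effective cycle with a convex rational function only yields weights $\geq 0$, and purity of the set-theoretic fibre in the expected dimension is perfectly compatible with the intersection product carrying weight $0$ on some, or even all, maximal cells of $f^{-1}\{y\}$: a priori $\betrag{f^*(y)}$ is merely a subset of $f^{-1}\{y\}$, and no dimension count can force equality of an effective cycle's support with an ambient set of the same dimension. (In passing, your purity argument also quotes the wrong tool: it is local surjectivity of $\lambda_{f,q}$ at points $q$ in the relative interior of a maximal cell, not openness of $f$, that forces maximal cells to have full-dimensional images.)

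The missing ingredient is the exact non-cancellation statement \cite[lemma 1.2.25]{disshannes}: if $\varphi$ is convex and $C$ has only positive weights, then $\varphi\cdot C$ again has only positive weights and, crucially, $|\varphi\cdot C|$ \emph{equals} the domain of non-linearity of $\varphi_{\mid |C|}$, so convexity forbids cancellation exactly. The paper runs an induction over the factors: at each stage positivity is preserved and $|f^*\varphi_{i-1}\cdot(f^*\varphi_i\cdots f^*\varphi_{r}\cdot X)| = \betrag{(\varphi_{i-1}\circ f)_{\mid f^{-1}(|\varphi_i\cdots\varphi_{r}\cdot Y|)}}$, which the second part of lemma \ref{flatlemma} converts into $f^{-1}(|\varphi_{i-1}\cdots\varphi_{r}\cdot Y|)$; note that this requires the restriction of $f$ to the partial intersection product to remain locally surjective onto $\varphi_i\cdots\varphi_{r}\cdot Y$, which the paper secures by invoking the induction hypothesis at stars around other points together with the locality of intersecting with rational functions --- a point entirely absent from your sketch. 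This induction delivers both inclusions of supports simultaneously, which makes your separate first paragraph superfluous; that is just as well, since as stated it is representative-dependent: the ``domain of non-linearity'' of a cocycle is not well-defined, and the union of the non-linearity loci of the factors in a local presentation can be strictly larger than $\{y\}$, so the easy inclusion $\betrag{f^*(y)}\subseteq f^{-1}\{y\}$ should instead be derived from locality of the cocycle intersection (the class restricted to $Y\setminus\{y\}$ vanishes), as in \cite[remark 4.11]{francois}.
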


In order to prove this we need the following lemma:

\begin{lemma}
\label{max}
Let $M$ be a matroid of rank $r$ on the set $[m]$. Let $L:=\R\cdot (1,\ldots,1)$. Then 
$\max\{x_1,\ldots,x_m\}^{r-1}\cdot\trop(M)=L$. 
\end{lemma}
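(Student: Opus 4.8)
The plan is to reduce the statement to a single intersection computation and then induct on the rank $r$. Concretely, I would first establish the \emph{truncation identity}
\[
\max\{x_1,\ldots,x_m\}\cdot\trop(M)=\trop(T(M)),
\]
where $T(M)$ is the matroid of rank $r-1$ on $[m]$ obtained by truncating $M$ (its proper flats being the flats of $M$ of rank at most $r-2$ together with $[m]$). Granting this, the lemma follows by induction: the base case $r=1$ is immediate, since a loopfree rank-$1$ matroid has the single chain $\emptyset\subsetneq[m]$, whence $\trop(M)=\R\cdot V_{[m]}=L$; and for $r\ge 2$ the matroid $T(M)$ is again loopfree of rank $r-1$, so
\[
\max^{r-1}\cdot\trop(M)=\max^{r-2}\cdot\bigl(\max\cdot\trop(M)\bigr)=\max^{r-2}\cdot\trop(T(M))=L
\]
by the inductive hypothesis applied to $T(M)$.

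To prove the truncation identity I would begin by describing $\varphi:=\max\{x_1,\ldots,x_m\}$ on each cone of $\mf(M)$. On the maximal cone $\langle\F\rangle$ attached to a maximal chain $\emptyset\subsetneq F_1\subsetneq\cdots\subsetneq F_r=[m]$, writing a point as $\sum_i\lambda_i V_{F_i}$ gives $x_j=-\sum_{i:\,j\in F_i}\lambda_i$; since every coordinate indexed by the top layer $F_r\setminus F_{r-1}$ equals $-\lambda_r$ and all others are $\le-\lambda_r$ (because $\lambda_1,\ldots,\lambda_{r-1}\ge 0$), one finds $\varphi=-\lambda_r$ on $\langle\F\rangle$. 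Thus the linear part $\varphi_\sigma$ annihilates every $V_{F_i}$ coming from a \emph{proper} flat and sends $V_{[m]}$ to $-1$.

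The weight computation then takes place on a codimension-$1$ cone $\tau=\langle\G\rangle$, $\G=(\emptyset\subsetneq G_1\subsetneq\cdots\subsetneq G_{r-1}=[m])$, whose ranks skip exactly one value $s\in\{1,\ldots,r-1\}$; the maximal cones $\sigma>\tau$ are obtained by inserting a rank-$s$ flat $F$ with $G_{s-1}\subsetneq F\subsetneq G_s$. The key matroid input is that these interpolating flats partition $G_s\setminus G_{s-1}$ (they are the rank-$1$ flats of the rank-$2$ interval $[G_{s-1},G_s]$), which yields $\sum_F V_F=V_{G_s}+(k-1)V_{G_{s-1}}\in V_\tau$, where $k$ is their number; this is precisely the balancing of $\trop(M)$ at $\tau$ and confirms that $v_{\sigma/\tau}=V_F$ is the primitive normal vector. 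Feeding this into the weight formula, the first sum vanishes because each $F$ is a proper flat, so $\varphi_\sigma(V_F)=0$, and the weight of $\tau$ reduces to $-\varphi_\tau\bigl(V_{G_s}+(k-1)V_{G_{s-1}}\bigr)$. Since $G_{s-1}$ is never all of $[m]$ while $G_s=[m]$ exactly when $s=r-1$, this equals $1$ if $s=r-1$ and $0$ otherwise. Hence $\max\cdot\trop(M)$ carries weight $1$ precisely on the codimension-$1$ cones skipping rank $r-1$ — which are exactly the maximal cones of $\trop(T(M))$ — and weight $0$ elsewhere, proving the identity.

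I expect the main obstacle to lie in the bookkeeping of the weight formula, and in particular in verifying that the primitive normal vectors $u_{\sigma/\tau}$ really are represented by the $V_F$, so that no lattice-index corrections enter; this rests on checking that the sublattices spanned by the nested vectors $V_{G_i}$ are saturated, which follows from the flag structure. The second delicate point is the matroid-theoretic claim that the interpolating flats partition the layer $G_s\setminus G_{s-1}$, which is where the combinatorics of the lattice of flats is used.
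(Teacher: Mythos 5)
Your proposal is correct and takes essentially the same route as the paper's proof: both reduce the lemma to the truncation identity $\max\{x_1,\ldots,x_m\}\cdot\trop(M)=\trop(T(M))$, established on the fine subdivision $\mf(M)$ via the evaluation $\varphi(V_F)=-1$ iff $F=E(M)$ (and $0$ on proper flats) together with the balancing relation $\sum_F V_F=V_{F_{j+1}}+(k-1)\cdot V_{F_j}$ at a codimension-one chain, and then conclude by induction on the rank. You merely make explicit some details the paper leaves implicit (the computation $\varphi=-\lambda_r$ on maximal cones, the partition of $G_s\setminus G_{s-1}$ by the interpolating flats, and the primitivity of the $V_F$ as normal vectors), all of which check out.
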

\begin{proof}
We set $\varphi:=\max\{x_1,\ldots,x_m\}$ and denote by $T(M)$ the truncation of $M$, i.e.\ the matroid obtained from $M$ by removing all flats of rank $r-1$.
Let $\F:=(\emptyset=F_0\subsetneq F_1\ldots\subsetneq F_{r-2}\subsetneq F_{r-1}:=E(M))$ be a chain of flats with $\rank_M(F_i)=i$ for $i\leq j$ and $\rank_M(F_i)=i+1$ for $i\geq j+1$. Note that $\varphi$ is linear on the cones of $\mf(M)$ and satisfies $\varphi(V_F)=-1$ if $F=E(M)$, and $0$ otherwise. As
\[\sum_{F \text{ flat of $M$ with } F_j\subsetneq F \subsetneq F_{j+1}} V_F = V_{F_{j+1}}+(\betrag{F \text{ flat with } F_j\subsetneq F \subsetneq F_{j+1}}-1)\cdot V_{F_j},\]
it follows directly from the definition of intersecting with rational functions that $\varphi\cdot\mf(M)=\mf(T(M))$. Now a simple induction proves the claim.
\end{proof}

\begin{proof}[Proof of lemma \ref{settheoreticfibre}]
Let $y$ be a point in $Y$ and let $x$ be a point in $X$ with $f(x)=y$. As the intersection-theoretic computations are local, it suffices to show the claim for the induced morphism $\lambda_{f,x}$ on the respective stars; that means we can assume that $f$ is linear, $X$ is a fan cycle, $Y$ is a matroid variety modulo lineality space and $y=0$. Let $r$ be the dimension of $Y$.
We choose convex rational functions $\varphi_i$ such that $y=\varphi_1\cdots\varphi_{r}\cdot Y$. This can be done by decomposing $Y$ into a cross product of matroid varieties modulo $1$-dimensional lineality spaces (cf.\ \cite[section 2]{francoisrau}) and then using lemma \ref{max}. We show by induction that $f^*\varphi_i\cdots f^*\varphi_{r}\cdot X$ is a cycle having only positive weights and satisfying
\[|f^*\varphi_i\cdots f^*\varphi_{r}\cdot X|=f^{-1} (| \varphi_i\cdots\varphi_{r}\cdot Y|),\]
which implies the claim because $f^*(y)=f^*\varphi_1\cdots f^*\varphi_{r}\cdot X$: Since $f^*\varphi_{i-1}$ is convex and $f^*\varphi_i\cdots f^*\varphi_{r}\cdot X$ has only positive weights, it follows from \cite[lemma 1.2.25]{disshannes} that the positivity of the weights is preserved and that
\[|f^*\varphi_{i-1}\cdot f^*\varphi_i\cdots f^*\varphi_{r}\cdot X|=|(f^*\varphi_{i-1})_{\mid |f^*\varphi_i\cdots f^*\varphi_{r}\cdot X|}|,\]
where the right hand side is the domain of non-linearity of the restriction of the rational function $f^*\varphi_{i-1}$ to (the support of) $f^*\varphi_i\cdots f^*\varphi_{r}\cdot X$. By induction hypothesis, this is equal to the domain of non-linearity
\[|(\varphi_{i-1}\circ f)_{\mid f^{-1}(|\varphi_i\cdots \varphi_{r}\cdot Y|)}|,\]
which by lemma \ref{flatlemma} coincides with
\[f^{-1}(|{\varphi_{i-1}}_{\mid |\varphi_i\cdots \varphi_{r}\cdot Y|}|)=f^{-1}(|\varphi_{i-1}\cdot\varphi_i\cdots \varphi_{r}\cdot Y|).\]
Note that our induction hypothesis (for stars around different points) and the locality of intersecting with rational functions (cf.\ \cite[proposition 1.2.12]{disshannes}) ensure that the restriction of $f$ to $f^*\varphi_i\cdots f^*\varphi_{r}\cdot X$ satisfies the assumptions of lemma \ref{flatlemma}.
\end{proof}

\begin{remark}
Lemma \ref{settheoreticfibre} ensures that all set-theoretic fibres of a locally surjective morphism have the expected dimension. Therefore, local surjectivity might be seen as a tropical analogue of flatness.
\end{remark}

\begin{definition}
\label{abc}
Let $f:X\rightarrow Y$ and $f':X'\rightarrow Y$ be morphisms of smooth varieties. Assume that $f'$ is locally surjective. Recall that the diagonal $\Delta_Y\in\Zy_{\dim Y}(Y\times Y)$ is just the push-forward of $Y$ along the morphism $y\mapsto (y,y)$. Then we define the tropical \emph{fibre product}
\[ X\times_Y X':= (f\times f')^* (\Delta_Y) \in \Zy_{\dim X+\dim X'-\dim Y}(X\times X') \]
to be the pull-back of the diagonal $\Delta_Y$ along the morphism of smooth varieties $f\times f':X\times X'\rightarrow Y\times Y$. Let $\pi_X, \pi_{X'}$ be the projections from $X\times X'$ to $X$ and $X'$ respectively. As the support of the pull-back satisfies
\[ |(f\times f')^* (\Delta_Y)| \subseteq (f\times f')^{-1} (|\Delta_Y|)=\{(x,x')\in X\times X': f(x)=f'(x')\},\]
we obtain the following commutative diagram of tropical morphisms:
\[
\begin{CD}
X\times_Y X' @>\pi_X>> X\\
@VV\pi_{X'}V @VVfV\\
X' @>f'>> Y
\end{CD}
\] 
\end{definition}

\begin{remark}
We will see later in theorem \ref{fibreproduct} that the assumption that $f'$ is locally surjective is needed to make sure that $X\times_Y X'$ is indeed a fibre product. Therefore, we can only define it for this case. 
\end{remark}

\begin{proposition}
\label{fibreXf}
Using the notations and assumptions of definition \ref{abc} we have $$\pi_X^* (p)=\{p\}\times f'^*(f(p)),$$ for each point $p$ in $X$.
\end{proposition}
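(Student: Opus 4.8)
The plan is to compute both sides directly using the definitions of pull-back and fibre product, and reduce the claim to a compatibility between pulling back along $f\times f'$ and pulling back along $f$ and $f'$ separately.

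First I would unwind the right-hand side. Pulling back the point $f(p)$ along the locally surjective morphism $f'$ gives the cycle $f'^*(f(p))\in\Zy_{\dim X'-\dim Y}(X')$, which by lemma \ref{settheoreticfibre} has only positive weights and support equal to the set-theoretic fibre $f'^{-1}\{f(p)\}$. The crossproduct $\{p\}\times f'^*(f(p))$ is then a subcycle of $X\times X'$ supported on $\{p\}\times f'^{-1}\{f(p)\}=\{(p,x')\in X\times X':f'(x')=f(p)\}$, which is precisely $\pi_X^{-1}\{p\}\cap|X\times_Y X'|$ as a set. So the two supports already agree; the content of the proposition is the equality of the cycle structures (the weights).

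The main step is to re-express $\pi_X^*(p)$. Since $p$ is a point in the smooth variety $X$, there is a cocycle giving $p$ as an intersection product, and $\pi_X^*(p)$ is computed inside the smooth variety $X\times_Y X'$. I would instead factor the computation through $X\times X'$: the pull-back of $p$ along $\pi_X:X\times X'\to X$ is $\{p\}\times X'$ (pulling back a point along a projection just restricts the first coordinate, with weights of $X'$ preserved). The key identity I want is that restricting this to the fibre product commutes with the fibre-product construction, i.e.\ that
\[
\pi_X^*(p)=(f\times f')^*(\Delta_Y)\cdot(\{p\}\times X')=\{p\}\times\big((f\times f')^*(\Delta_Y)\big|_{\{p\}\times X'}\big).
\]
Here the first equality uses that $X\times_Y X'=(f\times f')^*(\Delta_Y)$ and that intersection products on the smooth ambient variety $X\times X'$ are associative and commute in the appropriate sense; the second uses that $f\times f'$ restricted to $\{p\}\times X'$ is essentially $f'$ shifted by the constant $f(p)$, so that intersecting with the (pulled-back) diagonal along $\{p\}\times X'$ is the same as intersecting with the diagonal along $X'$ via $f'$, translated into the fibre over $f(p)$. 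This last identification is exactly the statement that $(f\times f')^*(\Delta_Y)$ restricted to the slice $\{p\}\times X'$ equals $\{p\}\times f'^*(f(p))$, which is what lemma \ref{settheoreticfibre} and the definition of $f'^*(f(p))$ via a cocycle for $f(p)$ provide.

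The hard part will be justifying the slicing identity rigorously: that pulling back the diagonal along $f\times f'$ and then restricting to $\{p\}\times X'$ agrees, with correct weights, with pulling back the single point $f(p)$ along $f'$. I expect to handle this by choosing an explicit cocycle $\psi$ with $\psi\cdot Y=f(p)$ and a convex presentation of the diagonal (for instance via rational functions on $Y\times Y$ whose restriction to the slice $\{f(p)\}\times Y$ recovers $\psi$), so that the pull-back $(f\times f')^*(\Delta_Y)$ restricted to $\{p\}\times X'$ literally becomes $f'^*\psi\cdot X'=f'^*(f(p))$. The local surjectivity of $f'$, via lemma \ref{flatlemma}, guarantees that these restricted rational functions have the expected domains of non-linearity and that no spurious lower-dimensional contributions arise, so the weights match on the nose. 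Once this slicing identity is in place, combining it with $\pi_X^*(p)=(f\times f')^*(\Delta_Y)\cdot(\{p\}\times X')$ yields the claim.
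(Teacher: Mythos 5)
Your proposal is correct in substance, and its first half is exactly the paper's own opening move: the paper also takes the unique cocycle $\varphi\in\Co^{\dim X}(X)$ with $\varphi\cdot X=p$ (\cite[corollary 4.9]{francois}) and uses the projection formula together with commutativity of intersection products (\cite[proposition 3.28]{francois}) to rewrite $\pi_X^*(p)$ as an intersection against the slice $\{p\}\times X'$ computed on the smooth ambient space $X\times X'$; the paper's expression $(\pi_{X\times X'})_{\ast}\left(\Gamma_{f\times f'}\cdot(\{p\}\times X'\times\Delta_{Y})\right)$ is your $(f\times f')^*(\Delta_Y)\cdot(\{p\}\times X')$ with the graph not yet pushed down. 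Where you genuinely diverge is the slicing step: the paper stays inside the graph calculus, quoting \cite[theorem 6.4(9) and lemma 8.4]{francoisrau} for the identities $\Gamma_f\cdot(\{p\}\times Y)=\{(p,f(p))\}$ and $\Gamma_{(f',f')}\cdot(X'\times Y\times\{f(p)\})=\Gamma_{f'}\cdot(X'\times\{f(p)\})$, after which $\{p\}\times f'^*(f(p))$ drops out from the definition of the pull-back; you instead propose to present $\Delta_Y$ by a cocycle $\delta$ on $Y\times Y$ (available precisely because $Y$ is smooth --- this is the central construction of \cite{francoisrau}), restrict $(f\times f')^*\delta$ to the slice, and recognise the restriction of $\delta$ to $\{f(p)\}\times Y$ as the point-cutting cocycle via the uniqueness statement of \cite[corollary 4.9]{francois}. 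That route is viable and arguably more concrete, but the step you defer --- that the graph-defined cycle pull-back $(f\times f')^*(\Delta_Y)$ agrees with $(f\times f')^*\delta\cdot(X\times X')$ --- is exactly the content the paper's citations package, so carrying out your plan amounts to re-deriving those lemmas by the projection formula rather than quoting them; the two proofs buy essentially the same thing at the same price.

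One genuine correction: local surjectivity of $f'$ and lemma \ref{flatlemma} are irrelevant to this proposition. The cycle identity, weights included, is pure intersection theory (commutativity, cross-product compatibility, projection formula), and the paper's proof never invokes local surjectivity; it enters only in lemma \ref{settheoreticfibre}, where it identifies the support $|f'^*(f(p))|$ with the set-theoretic fibre and gives positivity of weights --- a separate statement from the one proved here. So your closing appeal to flatness-type arguments to ensure that ``no spurious lower-dimensional contributions arise'' should be dropped in favour of the intersection-theoretic compatibilities you already cite; as written it suggests the weights depend on local surjectivity, which they do not.
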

\begin{proof}
In this proof, by abuse of notation, $\pi_X,\pi_{X'},\pi_{X\times X'}$ denote projections from a product of $X,Y,X'$ to the respective cycle. Let $\varphi\in\Co^{\dim X}(X)$ be the (uniquely defined) cocycle such that $\varphi\cdot X=p$ \cite[corollary 4.9]{francois}. By the projection formula and commutativity of intersection products \cite[proposition 3.28]{francois} we have
\[\pi_X^* (p) = \pi_X^*\varphi\cdot (X\times_Y X') = (\pi_{X\times X'})_{\ast} \Gamma_{f\times f'}\cdot (\{p\}\times X'\times\Delta_{Y}).\]
Since we know by \cite[theorem 6.4(9) and lemma 8.4(1)]{francoisrau} that
\[\{p\}\times X'\times\Delta_{Y}=(\{p\}\times X'\times Y\times Y)\cdot(X\times X'\times\Delta_{Y})\] and $\Gamma_f\cdot (\{p\}\times Y)=\{(p,f(p)\}$, the above is equal to
\[\{p\}\times (\pi_{X'})_{\ast} ((\Gamma_{f'}\times\{f(p)\})\cdot(X'\times\Delta_{Y})).\]
Now it follows in an analogous way from \cite[theorem 6.4(9) and lemma 8.4(2)]{francoisrau} that the latter equals
\begin{align*}
&\{p\}\times (\pi_{X'})_{\ast} (\Gamma_{(f',f')}\cdot (X'\times Y\times \{f(p)\})) \\
=\;& \{p\}\times (\pi_{X'})_{\ast} (\Gamma_{f'}\cdot (X'\times \{f(p)\})) \\
=\;& \{p\} \times {f'}^*(f(p)).
\end{align*}
\end{proof}

We are now ready to state the main theorem of this section.
\begin{theorem}
\label{fibreproduct}
If $f:X\rightarrow Y$, $f':X'\rightarrow Y$ are morphisms of smooth tropical varieties and $f'$ is locally surjective, then the support of $X\times_Y X'$ is
\[|X\times_Y X'|=\{(x,x')\in X \times X': f(x)=f'(x')\}. \] 
In particular, $X\times_Y X'$ satisfies the universal property of fibre products.
\end{theorem}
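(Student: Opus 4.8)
The plan is to establish the two inclusions separately, using the fibre computation of Proposition \ref{fibreXf} together with the set-theoretic fibre identification of Lemma \ref{settheoreticfibre}. The inclusion $|X\times_Y X'|\subseteq\{(x,x'):f(x)=f'(x')\}$ is already recorded in Definition \ref{abc}, since the support of a pull-back is contained in the preimage of the support of the cycle being pulled back, and $|\Delta_Y|=\{(y,y)\}$. So the real content is the reverse inclusion: every pair $(x,x')$ with $f(x)=f'(x')$ actually lies in the support of $X\times_Y X'$, i.e.\ carries non-zero (in fact positive) weight.

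For the reverse inclusion I would argue fibrewise over $X$ via the projection $\pi_X$. Fix a point $x\in X$ and set $y:=f(x)$. By Proposition \ref{fibreXf} we have $\pi_X^*(x)=\{x\}\times f'^*(y)$. Now $Y$ is smooth and $f'$ is locally surjective, so Lemma \ref{settheoreticfibre} applies to $f'$ and gives $|f'^*(y)|=f'^{-1}\{y\}$ with all weights positive. Hence $|\pi_X^*(x)|=\{x\}\times f'^{-1}\{y\}$, which is precisely the set of pairs $(x,x')$ in the set-theoretic fibre product lying over $x$. Since this holds for every $x\in X$, taking the union over $x$ shows that the set-theoretic fibre product is contained in $\bigcup_{x\in X}|\pi_X^*(x)|$, and each such star-fibre sits inside $|X\times_Y X'|$ because the support of the pull-back of a point under $\pi_X$ is contained in the support of the cycle on which $\pi_X$ is defined. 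Combined with the first inclusion, this yields the claimed equality of supports, and the positivity from Lemma \ref{settheoreticfibre} confirms that $X\times_Y X'$ is a genuine tropical variety rather than a cycle with cancelling weights.

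The step I expect to be the main obstacle is justifying cleanly that $|\pi_X^*(x)|\subseteq|X\times_Y X'|$ and that ranging over all $x$ recovers the whole support, rather than just a dense or lower-dimensional piece. The dimension bookkeeping must be checked: $X\times_Y X'$ has dimension $\dim X+\dim X'-\dim Y$, and for generic $x$ the fibre $f'^{-1}\{y\}$ has dimension $\dim X'-\dim Y$ by the flatness-type statement following Lemma \ref{settheoreticfibre}, so the dimensions add up correctly and no component of $X\times_Y X'$ can fail to meet some fibre $\pi_X^*(x)$. Once the support equality is in hand, the universal property follows formally: the pull-back $(f\times f')^*$ is functorial and the commutative square of Definition \ref{abc} is initial among squares over $f,f'$ because its support is exactly the set-theoretic fibre product, on which any competing cone is forced to factor uniquely through the projections $\pi_X,\pi_{X'}$.
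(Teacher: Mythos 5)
Your proposal is correct and takes essentially the same route as the paper: the support equality is obtained exactly as there, by combining Proposition \ref{fibreXf} with Lemma \ref{settheoreticfibre} applied to $f'$ (so that $|\pi_X^*(x)|=\{x\}\times f'^{-1}\{f(x)\}\subseteq |X\times_Y X'|$, while the reverse inclusion is already in Definition \ref{abc}), and the universal property then follows by the same direct observation that $z\mapsto (g(z),g'(z))$ is the unique morphism compatible with the projections. One minor caveat: your side remark that positivity of the fibre weights already shows $X\times_Y X'$ is a tropical variety overstates what follows at this point --- in the paper this is a separate statement (Proposition \ref{weight1}) needing an additional local argument --- but since the theorem asserts only the support equality and the universal property, this does not affect your proof; likewise your worry about ``recovering the whole support'' is unfounded, as the forward inclusion from Definition \ref{abc} already handles that direction pointwise.
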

\begin{proof}
Combining lemma \ref{settheoreticfibre} and proposition \ref{fibreXf} we immediately obtain that the support of $X\times_Y X'$ is $\{(x,x')\in X \times X': f(x)=f'(x')\}$. For the second part, let
$Z$ be the domain of two tropical morphisms $g:Z\rightarrow X$, $g':Z\rightarrow X'$ such that $f\circ g=f'\circ g'$. Then it is clear that $z\mapsto G(z):=(g(z),g'(z))$ is the only morphism from $Z$ to $X\times_Y X'$ such that $\pi_X\circ G=g$ and $\pi_{X'}\circ G=g'$.
\end{proof}

\begin{remark}
Unfortunately, the tropical fibre product is not uniquely defined by the ``tropical universal property'': Changing the weights of $X\times_Y X'$ in such a way that it still satisfies the balancing condition produces a non-isomorphic cycle that still fulfils the ``tropical universal property''. This happens because a tropical morphism whose inverse is again a morphism is not necessarily an isomorphism. Therefore, one might try to give a slightly stronger definition of a tropical morphism, somehow respecting the weights, in order to fix this flaw. However, since this is far beyond the scope of this paper and we do not actually need the universal property, we do not look further into this.
\end{remark}

\begin{remark}
Let $\curlyx$ and $\curlyx'$ be polyhedral structures of $X$ and $X'$. For two cells $\sigma\in\curlyx$ and $\sigma'\in\curlyx'$ we define the cell 
$\sigma\times_Y \sigma':=\{(x,x')\in \sigma\times \sigma': f(x)=f'(x')\}$. 
By theorem \ref{fibreproduct} $$\curlyx\times_Y\curlyx':=\{\sigma\times_Y\sigma':\sigma\in\curlyx,\sigma'\in\curlyx'\}$$ is a polyhedral structure of $X\times_Y X'$.
\end{remark}

We prove in the next propositions that fibre products are tropical varieties (i.e.\ all weights are positive) and the projections $\pi_X:X\times_Y X'\rightarrow X$ are locally surjective.

\begin{proposition}
\label{weight1}
All maximal cells of $X\times_Y X'$ have positive weight. In particular, $X\times_Y X'$ is a tropical variety.
\end{proposition}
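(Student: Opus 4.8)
The plan is to read off the weight of each maximal cell of $X\times_Y X'$ from the positive weights of a point-fibre of the projection $\pi_X$, using proposition \ref{fibreXf}. Since weights are a local datum, I would fix a maximal cell $\Sigma$ of $X\times_Y X'$, choose a generic point $q\in\relint(\Sigma)$ and set $p:=\pi_X(q)\in X$. The first step is to show that $\pi_X$ maps $\Sigma$ onto a full-dimensional subset of $X$. By lemma \ref{settheoreticfibre} the set-theoretic fibre $\pi_X^{-1}\{p\}\cap|X\times_Y X'|$ equals $|\pi_X^*(p)|=\{p\}\times|f'^*(f(p))|$, which is pure of the expected dimension $\dim X'-\dim Y$. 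If $\pi_X(\Sigma)$ had dimension strictly less than $\dim X$, the fibre of $\pi_X|_\Sigma$ through $q$ would have dimension $\dim\Sigma-\dim\pi_X(\Sigma)>\dim X'-\dim Y$, contradicting this; hence the linear part $\lambda_{\pi_X,q}$ restricted to $\Sigma$ has full rank $\dim X$.

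Next I would compare the two weights. Since $p$ is generic, pulling it back amounts to a transversal cut: locally at $q$ the star $\Star_{X\times_Y X'}(q)$ is the vector space $V_\Sigma$ carrying the weight $\omega(\Sigma)$, and $\pi_X^*(p)$ is obtained by intersecting with the fibre direction, so its maximal cell through $q$ is $V_\Sigma\cap\ker(\lambda_{\pi_X,q})$ with weight $\omega(\Sigma)\cdot m_\Sigma$, where $m_\Sigma:=[\Lambda_{\pi_X(\Sigma)}:\lambda_{\pi_X,q}(\Lambda_\Sigma)]$ is a positive integer because $\lambda_{\pi_X,q}$ is of full rank onto the full-dimensional cell $\pi_X(\Sigma)$. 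This is the standard behaviour of the weight under pulling back a generic point along a morphism, and in particular the sign of the fibre weight agrees with the sign of $\omega(\Sigma)$.

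Finally I would invoke proposition \ref{fibreXf}, which identifies $\pi_X^*(p)=\{p\}\times f'^*(f(p))$. The weights of a cross product with the point $\{p\}$ are exactly the weights of $f'^*(f(p))$, and these are all positive by lemma \ref{settheoreticfibre}, since $f'$ is locally surjective and $Y$ is smooth. Hence the weight of the maximal cell of $\pi_X^*(p)$ through $q$ is positive, and the relation $\omega(\Sigma)\cdot m_\Sigma>0$ with $m_\Sigma>0$ forces $\omega(\Sigma)>0$. As $\Sigma$ was an arbitrary maximal cell, all weights of $X\times_Y X'$ are positive, so it is a tropical variety.

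The step I expect to require the most care is the weight-comparison of the second paragraph: one must ensure that for a sufficiently generic $p$ the cocycle cutting out $p$ meets $\Sigma$ transversally and that the resulting cell is genuinely a maximal cell of $\pi_X^*(p)$, so that its weight is computed by the lattice-index formula rather than vanishing or being merged with contributions from neighbouring cells. The full-rank statement established in the first paragraph is exactly what guarantees both the transversality and the positivity of $m_\Sigma$; the positivity of the fibre weights supplied by proposition \ref{fibreXf} together with lemma \ref{settheoreticfibre} then yields the claim.
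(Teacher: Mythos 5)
Your proof is correct, and its skeleton matches the paper's: both arguments obtain positivity of the point-fibre from proposition \ref{fibreXf} combined with lemma \ref{settheoreticfibre}, localise at an interior point of the maximal cell, and deduce the sign of $\omega(\Sigma)$ from the sign of the local fibre. Where you differ is in how the local weight comparison is carried out. The paper makes no genericity assumption: at an arbitrary interior point $p$ of the cell it writes the local pull-back as $\omega(\sigma)$ times $(\lambda_{\pi_X,p})^*\varphi_1 \cdots (\lambda_{\pi_X,p})^*\varphi_{\dim X} \cdot \R^n$, where the $\varphi_i$ are \emph{convex} rational functions cutting out the origin in $\Star_X(\pi_X(p))$, and then quotes \cite[lemma 1.2.25]{disshannes}, which guarantees that intersecting with pull-backs of convex functions preserves positivity of weights; non-vanishing of the local product is automatic since $p$ itself lies in the support of the fibre. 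This sidesteps entirely your first paragraph (the full-rank/transversality argument) and your lattice-index computation, and it works even when $\pi_X(p)$ lies in the lower-dimensional skeleton of $X$. Your route buys an explicit multiplicity, $\omega(\Sigma)\cdot\left[\Lambda_{\pi_X(\Sigma)}:\lambda_{\pi_X,q}(\Lambda_\Sigma)\right]$, at the cost of two points you should make airtight: (i) the index formula for pulling back a generic point along an integer linear surjection is true (one can verify it by taking $\varphi_i=\max\{x_i,0\}$ and using the weight formula for rational functions), but it is not available off the shelf in the paper or its references in the form you use it, and it tacitly uses that maximal cells of the smooth variety $X$ have weight $1$; (ii) your appeal to lemma \ref{settheoreticfibre} for the equality $\pi_X^{-1}\{p\}\cap|X\times_Y X'|=|\pi_X^*(p)|$ is misplaced, since that lemma requires local surjectivity of $\pi_X$, which is only established in proposition \ref{thirdaxiompi} --- but your dimension count needs only the inclusion $|X\times_Y X'|\subseteq\{(x,x'):f(x)=f'(x')\}$ from definition \ref{abc} together with $|\pi_X^*(p)|=\{p\}\times f'^{-1}(f(p))$ from proposition \ref{fibreXf} and lemma \ref{settheoreticfibre} applied to $f'$, so this is cosmetic. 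Note also that the linear part of $\pi_X$ is constant on $\relint(\Sigma)$, so the full-rank statement holds for every interior point and you may afterwards move $q$ to a generic position (so that $p$ avoids the lower skeleton of $X$) without circularity; your dimension-count substitute for transversality is exactly the explicit version of what the paper gets for free from convexity.
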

\begin{proof}
Let $\sigma$ be a maximal cell of $\curlyx\times_Y \curlyx'$, where $\curlyx,\curlyx'$ are polyhedral structures of $X,X'$. Let $p$ be a point in the interior of $\sigma$. We know by
Proposition \ref{fibreXf} and Lemma \ref{settheoreticfibre} that the pull-back $\pi_X^*(\pi_X(p))$ of the point $\pi_X(p)$ along the morphism $\pi_X:X\times_Y X'\rightarrow X$ has only positive weights. Set $n:= \dim X+\dim X'-\dim Y$. The locality of the pull-back operation implies that the pull-back of the origin along the morphism $\lambda_{\pi_X,p}: (\omega_{\curlyx\times_Y \curlyx'}(\sigma)\cdot\R^n) \rightarrow \Star_X(\pi_X(p))$ has only positive weights. As there are convex rational functions $\varphi_1,\ldots,\varphi_{\dim X}$ on the smooth cycle $\Star_X(\pi_X(p))$ that cut out the origin and $$(\lambda_{\pi_X,p})^*(0) = \omega_{\curlyx\times_Y \curlyx'}(\sigma) \cdot (\lambda_{\pi_X,p})^*\varphi_1 \cdots (\lambda_{\pi_X,p})^*\varphi_{\dim X} \cdot \R^n,$$ it follows from \cite[Lemma 1.2.25]{disshannes} that the weight $\omega_{\curlyx\times_Y \curlyx'}(\sigma)$ is positive.
\end{proof}

\begin{proposition}
\label{thirdaxiompi}
The projection morphism $\pi_X:X\times_Y X'\rightarrow X$ is locally surjective.
\begin{proof}
Let $p$ be a point contained in some cell $\sigma \times_Y \sigma'$ and let $q \in \alpha$ for some $\alpha \geq \sigma$. Consider $f(q)$ as an element of $\Star_{Y}(f(p))$. By the local surjectivity of $f'$, it has a preimage $v$ under $f'$ in some $\alpha' \geq \sigma'$; so the point $(q,v)$ is in $\Star_{X\times_Y X'}(p)$ and is obviously mapped to $q$ by $\pi_X$.
\end{proof}
\end{proposition}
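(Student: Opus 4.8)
The plan is to show directly that for every point $p=(x,x')$ in $X\times_Y X'$ the induced linear map $\lambda_{\pi_X,p}: \Star_{X\times_Y X'}(p)\to\Star_X(x)$ hits every direction of $\Star_X(x)$. The crucial preliminary step is to identify the star of the fibre product with a set-theoretic fibre product of stars. Writing $y:=f(x)=f'(x')$, I would combine Theorem \ref{fibreproduct}, which tells us that the support of $X\times_Y X'$ is $\{(a,a'): f(a)=f'(a')\}$, with the fact that $f$ and $f'$ are locally integer affine linear around $x$ and $x'$. For a direction $(v,v')$, the point $p+\epsilon(v,v')$ lies in this support for all small $\epsilon>0$ precisely when $v\in\Star_X(x)$, $v'\in\Star_{X'}(x')$ and $\lambda_{f,x}(v)=\lambda_{f',x'}(v')$ in $\Star_Y(y)$; the last equation comes from expanding $f(x+\epsilon v)=y+\epsilon\,\lambda_{f,x}(v)$ together with the analogous expression for $f'$ and comparing. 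Hence
\[ \Star_{X\times_Y X'}(p)=\{(v,v')\in\Star_X(x)\times\Star_{X'}(x'): \lambda_{f,x}(v)=\lambda_{f',x'}(v')\}, \]
and under this identification $\lambda_{\pi_X,p}$ is simply the projection $(v,v')\mapsto v$.

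Given this description, surjectivity is immediate from the local surjectivity of $f'$. I would take an arbitrary direction $v\in\Star_X(x)$ and set $w:=\lambda_{f,x}(v)\in\Star_Y(y)$. Since $f'$ is locally surjective, the map $\lambda_{f',x'}:\Star_{X'}(x')\to\Star_Y(y)$ is surjective, so there exists $v'\in\Star_{X'}(x')$ with $\lambda_{f',x'}(v')=w=\lambda_{f,x}(v)$. Then the pair $(v,v')$ lies in $\Star_{X\times_Y X'}(p)$ by the description above, and it satisfies $\lambda_{\pi_X,p}(v,v')=v$, which establishes the claim.

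The only real obstacle is the first step, the identification of $\Star_{X\times_Y X'}(p)$ with the fibre product of the two stars. This is exactly where Theorem \ref{fibreproduct} is essential: without the precise knowledge that the support of the fibre product equals the full set-theoretic fibre product, one could not guarantee that every compatible pair $(v,v')$ actually corresponds to a direction of the cycle $X\times_Y X'$. Once this identification is in place the argument reduces to a one-line lifting statement, and I would not expect any further difficulty; in particular no positivity or weight information is needed here, only the equality of supports.
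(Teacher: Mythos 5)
Your proof is correct and takes essentially the same route as the paper's: both identify, via the support equality of theorem \ref{fibreproduct}, the star $\Star_{X\times_Y X'}(p)$ with the set-theoretic fibre product of the stars, and then lift an arbitrary direction of $\Star_X(x)$ through the locally surjective map $\lambda_{f',x'}$. The paper merely phrases the lifting in terms of cells $\alpha\geq\sigma$, $\alpha'\geq\sigma'$ and leaves the star identification implicit, whereas you spell it out; the mathematical content is identical.
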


\section{Families of curves and the forgetful map}\label{section_families}
The aim of this section is to prove that every morphism from a smooth variety $X$ to $\mn$ gives rise to a family of curves. We start by defining families of curves over smooth varieties. 

\begin{definition}[Family of curves]
\label{family}
Let $n\geq 3$ and let $B$ be a smooth tropical variety. A locally surjective morphism $T \stackrel{g}{\to} B$ of tropical varieties is a \emph{prefamily} of $n$-marked tropical curves if it satisfies the following conditions:
\begin{enumerate}
\item For each point $b$ in $B$ the cycle $g^*(b)$ is a smooth rational tropical curve with exactly $n$ unbounded edges that are called the leaves of $g^*(b)$.
\item The linear part of $g$ at  any cell $\tau$ in (some and thus any polyhedral structure of) $T$ induces a surjective map $\lambda_{g\mid\tau}:\Lambda_{\tau}\rightarrow\Lambda_{g(\tau)}$ on the corresponding lattices.
\end{enumerate}  

A \emph{tropical marking} on a prefamily $T \stackrel{g}{\to} B$ is an open cover $\{U_\theta, \theta \in \Theta\}$ of $B$ together with a set of integer affine linear maps $s_i^\theta: U_\theta \to T, i=1,\dots,n$, such that the following holds:
\begin{enumerate}
 \item For all $\theta \in \Theta, i=1,\dots,n$, we have $g \circ s_i^\theta = \id_{U_\theta}$.
 \item For any $b \in U_\theta$ if $l_1,\dots,l_n$ denote the leaves of the fibre $g^*(b)$, then for each $i \in [n]$ there exists exactly one $j \in [n]$ such that $s_j^\theta(b) \in l_i^\circ$, where $l_i^\circ$ denotes the leaf without its vertex.
 \item For any $\theta \neq \zeta \in \Theta$ and $b \in U_\theta \cap U_\zeta$, the points $s_i^\theta(b)$ and $s_i^\zeta(b)$ mark the same leaf of $g^*(b)$. Note that we do not require them to coincide.
\end{enumerate}

A \emph{family} of $n$-marked tropical curves is then a prefamily with a marking.

We call two families $T \stackrel{g}{\to} B, T' \stackrel{g'}{\to} B$ \emph{equivalent} if for any $b$ in $B$ the fibres $g^*(b), g'^*(b)$ are isomorphic as $n$-marked tropical curves.
\end{definition}

\begin{example} \newl
\begin{itemize}
\item The morphism \[\pi: L^n_1\times\R\rightarrow\R, \ (x_1,\ldots,x_n,y)\mapsto y,\] together with the trivial marking $y\mapsto (e_i,y), \ i=0,1,\ldots, n$, is a family of $(n+1)$-marked curves.
\item We consider the tropical curves $X_1:=L^2_1$ and $X_2:=(\R\times \{0\}) + (\{0\}\times\R)$, where the latter is a sum of tropical cycles. Let us consider the morphisms \[\pi_i: L^n_1\times X_i\rightarrow\R,\  (x_1,\ldots,x_n,y_1,y_2)\mapsto y_2.\]
Although $\pi_i^*(p)=L^n_1\times\{p\}$ for all points $p$ in $\R$, $\pi_i$ is not a family of curves: e.g.\ for $i\in \{1,2\}$ and $p = ( (0,\dots,0),(-1,0)) \in L^n_1 \times X_i$ the map
$$\lambda_{\pi_i,p}: \Star_{L^n_1 \times X_i}(p) \cong L^n_1 \times \R \to \Star_{\R}(0) \cong \R$$
is just the constant zero map. Geometrically, we see that the set-theoretic fibre $\pi_i^{-1}\{0\}$ is $2$-dimensional. 
This illustrates the necessity of the local surjectivity without which $\pi,\pi_1,\pi_2$ would be equivalent families with completely different domains $L^n_1\times\R,L^n_1\times X_1,L^n_1\times X_2$ (compare to section \ref{section_equivalence}).
\end{itemize}
\end{example}

\begin{remark}\label{families_remark_second}
While the first condition in the definition of a prefamily is self-explanatory, the second requires some justification. We will see later that for all cells $\tau$ in (a polyhedral structure of) $T$ on which $g$ is not injective, condition (2) is already implied (cf.\ lemma \ref{lambdasurjective}). 
However, we will need condition (2) on all cells $\tau$, including those on which $g$ is injective, to show that the locally affine linear map $B\rightarrow \mn$ induced by the family $T\rightarrow B$ is an integer map and thus a tropical morphism (cf.\ definition \ref{deffibremorph}, proposition \ref{dgaffinelinear}). It is, in fact, not clear to us whether there exists an example of a locally surjective morphism with smooth curves as fibres, where this condition is not fulfilled or whether this condition can actually be dropped.
\end{remark}

We now want to show that the forgetful map $\ft:\mnp\rightarrow \mn$ is a family of $n$-marked curves. Therefore, we prove that it is locally surjective:

\begin{lemma}
\label{thirdaxiomft0}
For $n \geq 3$ and $v \in \mnp$, the map $\lambda_{\ft,v}$ is surjective. Hence the forgetful map is locally surjective.
\begin{proof}
 Let $\tau$ be the minimal cell of $\mnp$ containing $v$ and let $C$ be the curve corresponding to the point $v$. Let $w'$ be an element of $\Star_{\mn}(\ft(v))$. Then $w'$ corresponds to a curve which is obtained from the curve corresponding to $\ft(v)$ by resolving some higher-valent vertices. If we resolve the same vertices in $C$, we get a curve $C'$ corresponding to a point $v' \in \mnp$ such that $\ft(v') = w'$. In particular, the combinatorial type of $C'$ corresponds to a cell $\tau' \geq \tau$, so $v' \in \Star_{\mnp}(v)$.
\end{proof}
\end{lemma}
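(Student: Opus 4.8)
The plan is to reduce the statement to the combinatorial description of the cones of the moduli spaces and then exhibit an explicit preimage by a lifting construction. Recall that, since $\ft$ is induced by a global linear projection, the map $\lambda_{\ft,v}$ is simply the restriction of that projection to the fan $\Star_{\mnp}(v)$; proving local surjectivity at $v$ therefore amounts to showing that every point of $|\Star_{\mn}(\ft(v))|$ admits a preimage in $|\Star_{\mnp}(v)|$. The key fact I would invoke is the combinatorial meaning of stars in these moduli spaces: if $\tau$ is the minimal cone containing a point $w$, then the cones $\sigma \geq \tau$ correspond exactly to the combinatorial types obtained from the curve $C_w$ by resolving one or more of its higher-valent (valence $\geq 4$) vertices into lower-valent vertices joined by new bounded edges, and a point of $|\Star_{\mn}(w)|$ records such a resolution together with the small non-negative lengths of the newly created edges. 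Because $\lambda_{\ft,v}$ is linear, it suffices to treat an arbitrary point of the target star, cone by cone.

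With this dictionary in place, first I would fix $v \in \mnp$, let $\tau$ be its minimal cone and $C$ the corresponding $(n+1)$-marked curve, and take an arbitrary $w' \in |\Star_{\mn}(\ft(v))|$. By the dictionary, $w'$ is realised by a curve $D$ obtained from $\ft(C)$ — the $n$-marked curve gotten by deleting the $0$-th leaf and stabilising — by resolving a collection of its higher-valent vertices with prescribed new edge lengths. The lifting step is to perform the \emph{same} resolution on $C$: each higher-valent vertex of $\ft(C)$ corresponds to a vertex of $C$, so resolving the corresponding vertices of $C$ (while keeping the $0$-th leaf attached to its original location, now possibly on one of the split pieces) yields an $(n+1)$-marked curve $C'$ whose combinatorial type refines that of $C$, hence corresponds to a cone $\tau' \geq \tau$; the associated direction $w \in |\Star_{\mnp}(v)|$ is the candidate preimage. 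By construction, forgetting the $0$-th leaf of $C'$ returns $D$, so $\lambda_{\ft,v}(w) = w'$, giving the surjectivity and hence local surjectivity of $\ft$.

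The hard part will be making the correspondence between the vertices of $C$ and those of $\ft(C)$ precise and checking that it is compatible with the resolutions, precisely at the vertex where the $0$-th leaf is attached. Two cases must be distinguished: if the $0$-th leaf sits at a vertex of valence $\geq 4$, then forgetting it lowers the valence but does not delete the vertex, and the correspondence is immediate; if it sits at a trivalent vertex, stabilisation removes that vertex when passing to $\ft(C)$, and I must check that a resolution of the merged edge of $\ft(C)$ can still be lifted by choosing where to reattach the $0$-th leaf so that $\ft(C') = D$. In every case there is freedom in which split piece carries the $0$-th leaf, and I would argue that any consistent choice produces a curve $C'$ forgetting to $D$, so that no obstruction arises. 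A final routine verification is that the edge lengths prescribed by $w'$ can be copied verbatim onto $C'$, so that $w$ maps to $w'$ exactly rather than to a positive multiple or a nearby direction; this is immediate because the forgetful projection acts as the identity on the coordinates recording those edges.
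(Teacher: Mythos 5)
Your proposal is correct and follows essentially the same route as the paper's proof: lift a point $w'$ of $\Star_{\mn}(\ft(v))$ by performing the same vertex resolutions on the $(n+1)$-marked curve $C$ corresponding to $v$, obtaining a curve $C'$ whose combinatorial type refines the minimal cell $\tau$, hence a preimage in $\Star_{\mnp}(v)$. If anything, you are more careful than the paper, which passes silently over the case distinction at the vertex carrying the $0$-th leaf and over matching the edge lengths exactly.
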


We compute the fibres of the forgetful map in the following proposition.

\begin{proposition}
\label{forgetful}
Let $\ft:\mnp\rightarrow\mn$ be the forgetful map. Then for each point $p$ in $\mn$, the fibre $\ft^*(p)$ is a smooth rational curve having $n$ unbounded edges. 
\end{proposition}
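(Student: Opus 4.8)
The plan is to compute the fibre $\ft^*(p)$ explicitly by reducing to a local computation at a point and using the description of $\mnp$ and $\mn$ as combinatorial fan structures. Since the forgetful map is locally surjective (lemma \ref{thirdaxiomft0}) and $\mn$ is smooth (being isomorphic to a matroid variety modulo lineality space), lemma \ref{settheoreticfibre} applies: the support of $\ft^*(p)$ equals the set-theoretic fibre $\ft^{-1}\{p\}$ and all weights are positive. So the first task is to identify $\ft^{-1}\{p\}$ set-theoretically, and the second is to check the weights are all $1$ and that the resulting cycle is a smooth rational curve with $n$ unbounded edges.

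First I would fix the point $p\in\mn$, corresponding to an $n$-marked abstract rational curve $C$ with some fixed combinatorial type, and describe $\ft^{-1}\{p\}$ directly: a point of $\mnp$ over $p$ is an $(n+1)$-marked curve (with markings $0,1,\dots,n$) whose image under forgetting the $0$-th leaf is $C$. Such a curve is obtained by attaching the extra leaf $0$ somewhere onto $C$. The attachment position is exactly the data of where on $C$ the new leaf sits: it can be attached at any point along any edge (bounded or unbounded) of $C$, or at a vertex. This naturally gives $\ft^{-1}\{p\}$ the structure of the curve $C$ itself, as a topological space, where moving the attachment point along the edges of $C$ traces out the fibre. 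I would make this precise by recalling from the moduli-space description that attaching leaf $0$ at a point of the interior of an edge of $C$ produces a curve with one extra bounded edge, while attaching it at a vertex produces a higher-valent vertex, and then matching this up with the generators $v_{I|n+1}$ of the cones of $\mnp$.

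Next I would verify the local structure. Around each point of the fibre, the star of $\ft^*(p)$ should look like $L^n_1$ (for interior-of-edge attachment the fibre is locally one-dimensional and smooth; at a vertex of $C$ of valence $k$ the attachment point can move out along any of the $k$ incident edges, giving locally a copy of $L^{k-1}_1$, which is smooth). I would confirm the number of unbounded edges: the unbounded edges of the fibre correspond precisely to attaching leaf $0$ out to infinity along each of the $n$ leaves $l_1,\dots,l_n$ of $C$, giving exactly $n$ unbounded rays, so the fibre has $n$ unbounded edges as claimed. The weight-one statement follows either from proposition \ref{weight1} together with the fibre-product interpretation, or can be read off directly from lemma \ref{settheoreticfibre} once the local combinatorics confirm each maximal cone of the fibre arises with multiplicity one.

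The main obstacle I expect is the careful combinatorial bookkeeping at the vertices of $C$: one must correctly identify how the cones of $\mnp$ lying over $p$ assemble, check that the balancing condition at the vertices of the fibre matches the balancing of $L^k_1$, and confirm that no unexpected extra weights or cones appear. In particular, verifying that the fibre is genuinely \emph{smooth} (locally isomorphic to some $L^m_1$) rather than merely a rational graph requires translating the matroid-variety description of $\mn$ and $\mnp$ into the attachment picture and checking the local isomorphism explicitly at each valence. The unbounded-edge count and the global identification of the fibre with $C$ itself are comparatively routine once the local pictures at vertices and edge-interiors are pinned down.
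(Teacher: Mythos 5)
Your set-theoretic analysis follows the same route as the paper: the paper likewise combines the local surjectivity of $\ft$ (lemma \ref{thirdaxiomft0}) with lemma \ref{settheoreticfibre} to obtain $|\ft^*(p)|=\ft^{-1}\{p\}$, and it simply cites \cite[proposition 2.1.21]{disshannes} for the fact that this set is the support of a smooth rational \emph{irreducible} curve $C_p$ with $n$ unbounded ends --- essentially the ``attachment of the $0$-th leaf'' picture you propose to verify by hand. The genuine gap is in your final step, the claim that all weights equal $1$. Neither of the two mechanisms you offer can deliver this: lemma \ref{settheoreticfibre} and proposition \ref{weight1} both yield only \emph{positivity} of the weights, and the phrase ``each maximal cone of the fibre arises with multiplicity one'' is precisely the statement requiring proof. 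The weights of $\ft^*(p)$ are defined intersection-theoretically (by pulling back a cocycle cutting out $p$), and nothing in the set-theoretic or local combinatorial identification of the fibre determines their values. This is not cosmetic: if the weights were some $\lambda>1$, the fibre would be $\lambda\cdot C_p$, which is \emph{not} smooth, since isomorphisms must respect weights and $\lambda\cdot L^k_1\not\cong L^k_1$ for $\lambda\neq 1$. So the smoothness assertion itself hinges on the weight computation you leave open.

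The paper closes this gap with three ingredients absent from your proposal: (i) irreducibility of $C_p$, which forces $\ft^*(p)=\lambda_p\cdot C_p$ for a single integer $\lambda_p$ --- this is why the support identification suffices to reduce the problem to one number per fibre; (ii) rational equivalence: any two points of $\mn$ are rationally equivalent and pull-back along $\ft$ is compatible with rational equivalence, so $\lambda_p=\lambda_0$ for all $p$; and (iii) an explicit computation showing $\lambda_0=1$, carried out via the isomorphism $\mnp\cong\trop(K_n)/L$ and lemma \ref{max}: the fibre over the origin is $\varphi^{n-3}\cdot\trop(K_n)$ modulo $L$ for $\varphi=\max\{x_{i,j}:0<i<j\leq n-1\}$, and an induction through matroid truncations shows that the edge $\R_{\geq 0}\cdot v_{\{0,n\}}$ carries weight $1$. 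If you wanted to avoid rational equivalence, you would instead have to compute the intersection-theoretic weight locally at a cell of \emph{every} fibre, i.e.\ redo a computation of type (iii) in the star around an arbitrary point of $\mnp$ --- your sketch sets up neither alternative.
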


Our proof makes use of the following lemma.
\begin{lemma}
The edge $\R_{\geq 0}\cdot v_{\{0,n\}}$ has trivial weight $1$ in the fibre $\ft^*(0)$.
\end{lemma}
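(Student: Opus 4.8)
The plan is to compute the relevant fibre $\ft^*(0)$ explicitly at the point $v_{\{0,n\}}$ and read off the weight of the edge $\R_{\geq 0}\cdot v_{\{0,n\}}$ by a local computation. Since intersection products and pull-backs are local (they depend only on the stars), and the statement concerns the weight of a single edge, I would first replace everything by the corresponding stars: it suffices to understand $\ft^*(0)$ in a neighbourhood of a point in the relative interior of $\R_{\geq 0}\cdot v_{\{0,n\}}$, i.e.\ to compute $\Star_{\ft^*(0)}(\lambda v_{\{0,n\}})$ for small $\lambda>0$.

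Concretely, I would proceed as follows. First recall from the preliminaries that $0\in\mn$ is cut out by a cocycle $\varphi\in\Co^{\dim\mn}(\mn)$ with $\varphi\cdot\mn=0$, and that $\ft^*(0)=\ft^*\varphi\cdot\mnp$. The curve $C$ corresponding to the point $v_{\{0,n\}}\in\mnp$ has exactly one bounded edge, separating the ends $\{0,n\}$ from the rest; forgetting the $0$-th end yields the $n$-marked curve $\ft(v_{\{0,n\}})$ which has no bounded edge, i.e.\ $\ft(v_{\{0,n\}})=0\in\mn$. Thus $\R_{\geq 0}\cdot v_{\{0,n\}}$ indeed lies in the set-theoretic fibre $\ft^{-1}\{0\}$, which by lemma \ref{settheoreticfibre} (together with lemma \ref{thirdaxiomft0}) equals $|\ft^*(0)|$, so the edge genuinely appears in the fibre. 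To get its weight, I would use the explicit fan description from \cite{GKM}: the forgetful map is induced by the coordinate projection $\pi:\R^{\binom{n+1}{2}}\to\R^{\binom{n}{2}}$, and near $v_{\{0,n\}}$ the only way to move within the fibre while keeping all pairwise $n$-point distances (those not involving the end $0$) fixed is to vary the length of the bounded edge separating $\{0,n\}$. I would verify that $\lambda_{\ft,v_{\{0,n\}}}$ maps the one-dimensional cone $\R_{\geq 0}\cdot v_{\{0,n\}}$ to the origin, confirming that this edge is a one-dimensional piece of the fibre.

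The weight itself I would extract by the standard recipe for intersecting with rational functions: choosing the convex functions $\varphi_i$ that cut out the origin in $\mn$ (via the decomposition into matroid varieties modulo lineality and lemma \ref{max}), I would pull them back along $\ft$ and compute the weight of the cell $\R_{\geq 0}\cdot v_{\{0,n\}}$ in the successive intersection products. Because $\ft$ is induced by a coordinate projection that is, on the relevant cone, an \emph{isomorphism} onto its image lattice in every direction transverse to the edge $\R_{\geq 0}\cdot v_{\{0,n\}}$, the normal vectors and lattice indices that enter the weight formula all contribute a factor $1$; the primitive generator $v_{\{0,n\}}$ is itself a lattice generator, so no extra multiplicity is introduced. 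Assembling these, the weight comes out to $1$.

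I expect the main obstacle to be the bookkeeping in the last step: identifying precisely which cones of $\mnp$ adjacent to $\R_{\geq 0}\cdot v_{\{0,n\}}$ contribute to the intersection-theoretic computation, and checking that the lattice $\Lambda_{\R_{\geq 0}\cdot v_{\{0,n\}}}$ and the images under $\lambda_{\ft}$ of the transverse lattice directions are matched up so that all lattice indices equal $1$. Equivalently, one must rule out that the projection $\pi$ introduces an index greater than $1$ when restricted to the relevant sublattices of $\Z^{\binom{n+1}{2}}$. An alternative, perhaps cleaner, route would be to identify $\Star_{\ft^*(0)}$ around this edge directly as the star of a smooth curve $L^n_1$ (which proposition \ref{forgetful} asserts the whole fibre is), where every edge has weight $1$ by definition; but since this lemma is presumably a stepping stone toward proving proposition \ref{forgetful}, I would instead argue the weight-$1$ claim by the direct local lattice computation sketched above, so as not to argue in a circle.
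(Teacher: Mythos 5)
Your overall strategy is the same as the paper's---reduce to the matroid model, use the convex functions from lemma \ref{max} to cut out the fibre, and compute the weight of the edge via the intersection formula for rational functions---and your precaution against circularity (not invoking proposition \ref{forgetful}) is exactly right. But the step you defer as ``bookkeeping'' is in fact the entire content of the lemma, and the heuristic you offer for it does not work. The weight of a cell $\tau$ in $\varphi_1\cdots\varphi_k\cdot X$ is \emph{not} governed by lattice indices of the projection $\pi$ restricted to directions transverse to the edge; it is the alternating sum $\sum_{\sigma>\tau}\omega(\sigma)\varphi_\sigma(v_{\sigma/\tau})-\varphi_\tau(\sum_\sigma\omega(\sigma)v_{\sigma/\tau})$ over \emph{all} cones adjacent to a codimension-one face, and at every intermediate stage of the $(n-3)$-fold intersection many cones of the fine subdivision $\mf(K_n)$ are adjacent to the relevant face. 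Whether the result is $1$ depends on the values of the pulled-back function $\varphi=\max\{x_{i,j}:0<i<j\leq n-1\}$ on the rays $V_F$ and on a combinatorial identity among the flats of $M(K_n)$ lying between consecutive flats of a chain (the identity displayed in the proof of lemma \ref{max}); it cannot be read off from primitivity of $v_{\{0,n\}}$ or from $\pi$ being ``an isomorphism in transverse directions.'' The paper carries this out explicitly: it identifies the edge with the cone $\langle\emptyset\subsetneq G\subsetneq E(K_n)\rangle$, where $G$ is the flat given by the complete subgraph on $\{1,\dots,n-1\}$, observes that $\varphi(V_F)=-1$ precisely for $F\in\{G,E(K_n)\}$ and $0$ otherwise, and then proves by induction on $k$ that the cone associated to a chain $\emptyset\subsetneq F_1\subsetneq\ldots\subsetneq F_{n-3-k}\subsetneq G\subsetneq E(K_n)$ keeps weight $1$ in $\varphi^k\cdot\mf(K_n)$. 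Note that $\varphi$ now takes the value $-1$ on \emph{two} rays of each relevant chain, so the situation genuinely differs from lemma \ref{max} and the induction has to be redone rather than quoted.

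A second, smaller gap: you pull the $\varphi_i$ back ``along $\ft$'' but never explain how to handle the lineality-space quotient, nor which cone of the matroid fan your edge corresponds to. The paper resolves the first point by lifting to the projection $\tilde{\pi}:\trop(K_n)\rightarrow\trop(K_{n-1})$ and invoking the identity $\pi^*(0)=(\tilde{\pi}^*(L))/L$ from \cite[proposition 8.5]{francoisrau}, which is what makes lemma \ref{max} applicable upstairs; and it resolves the second by the explicit identification $\R_{\geq 0}\cdot v_{\{0,n\}}=f(\langle\emptyset\subsetneq G\subsetneq E(K_n)\rangle)$. Without these two ingredients and the induction above, your sketch establishes only what you already get from lemma \ref{settheoreticfibre}---that the edge appears in the fibre with \emph{some} positive weight---not that the weight is $1$.
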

\begin{proof}
Using the isomorphism $f:\trop(K_n)/L\rightarrow \mnp$ introduced in section 2 we have to compute the fibre over the origin of the projection $\pi:\trop(K_n)/L\rightarrow \trop(K_{n-1})/L$ which forgets the coordinates $x_{0,i}$. Note that we gave $K_n$ and $K_{n-1}$ the respective vertex sets $\{0,1,\ldots,n-1\}$ and $\{1,\ldots,n-1\}$ and that by abuse of notation we denoted both lineality spaces by $L$. If $\tilde{\pi}:\trop(K_n)\rightarrow\trop(K_{n-1})$ is the ``naturally lifted'' projection, then \cite[proposition 8.5]{francoisrau} states that $\pi^*(0)=(\tilde{\pi}^*(L))/L$. This enables us to use lemma \ref{max} to conclude that $\tilde{\pi}^*(L)=\varphi^{n-3}\cdot\trop(K_n)$, where $\varphi:=\max\{x_{i,j}:0<i<j\leq n-1\}$. Let $G$ be the flat of $M(K_n)$ corresponding to the complete subgraph with vertex set $\{1,\ldots,n-1\}$. It is easy to see that $\varphi$ is linear on the cones of $\mf(K_n)$ and that $\varphi(V_F)=-1$ if $F\in\{G,E(K_n)\}$, and $\varphi(V_F)=0$ otherwise.
A straightforward induction shows that the cone associated to $\F:=(\emptyset\subsetneq F_1\subsetneq\ldots\subsetneq F_{n-3-k}\subsetneq G\subsetneq E(K_n))$, where $\rank(F_i)=i$, has weight $1$ in $\varphi^k\cdot\mf(K_n)$. It follows that $\R_{\geq 0}\cdot v_{\{0,n\}}=f(\langle \emptyset \subsetneq G \subsetneq E(K_n)\rangle)$ has weight $1$ in $\ft^*(0)$.
\end{proof}

\begin{proof}[Proof of proposition \ref{forgetful}]
We know from \cite[proposition 2.1.21]{disshannes} that for each $p$ in $\mn$ there is a smooth rational irreducible curve $C_p$ which has $n$ unbounded ends and whose support $\betrag{C_p}$ is equal to the set-theoretic fibre $\ft^{-1}\{p\}$. The edges of $C_0$ are simply $\R_{\geq 0}\cdot v_{\{0,i\}}$, with $i\in [n]$. The local surjectivity of the forgetful map implies that $$\betrag{\ft^*(p)}=\ft^{-1}\{p\}=|C_p|.$$ Therefore, the irreducibility of $C_p$ allows us to conclude that $\ft^*(p)=\lambda_p\cdot C_p$ for some integer $\lambda_p$. Since any two points in $\mn$ are rationally equivalent \cite[theorem 9.5]{francoisrau} and the forgetful map is compatible with rational equivalence \cite[remark 9.2]{francoisrau}, we conclude that $\ft^*(p)$ and $\ft^*(0)$ are rationally equivalent and thus $\lambda_p=\lambda_0$. This finishes the proof as $\lambda_0=1$ by the previous lemma. 
\end{proof}

As the forgetful map clearly fulfils the second axiom on a prefamily, the following corollary is a direct consequence of proposition \ref{forgetful} and lemma \ref{thirdaxiomft0}.

\begin{corollary}
The forgetful map $\ft:\mnp\rightarrow \mn$ is a prefamily of $n$-marked tropical curves.
\end{corollary}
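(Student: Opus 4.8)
The plan is to go through Definition \ref{family} of a prefamily line by line and confirm that $\ft:\mnp\rightarrow\mn$ satisfies each requirement, three of which are already available from the preceding results. First I would record that the base $\mn$ is a smooth tropical variety — by the preliminaries it is isomorphic to the matroid variety modulo lineality space $\trop(K_{n-1})/L$ — and that $\ft$ is a morphism of tropical fan cycles, induced by the projection forgetting the $0$-th end (\cite{GKM}). The local surjectivity demanded of the morphism in Definition \ref{family} is exactly Lemma \ref{thirdaxiomft0}, and the first defining condition of a prefamily — that for each $p$ the cycle $\ft^*(p)$ is a smooth rational tropical curve with exactly $n$ unbounded edges — is precisely the content of Proposition \ref{forgetful}.

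This leaves only the second condition, namely surjectivity of the lattice map $\lambda_{\ft\mid\tau}\colon\Lambda_\tau\rightarrow\Lambda_{\ft(\tau)}$ for every cell $\tau$ of $\mnp$. Here the plan is to exploit that $\ft$ is induced by the coordinate projection $\pi$ forgetting the $0$-th end. Each cone $\tau$ of the fine subdivision of $\mnp$ is spanned by the primitive ray generators $v_{I|n+1}$ indexed by the bounded edges of its combinatorial type, and these generators form a lattice basis of $\Lambda_\tau$. Under $\ft$ each generator $v_{I|n+1}$ is sent either to a primitive ray generator $v_{I'|n}$ of $\mn$, when forgetting the $0$-th leaf leaves a valid split of $[n]$, or to $0$, when the corresponding edge degenerates to a leaf and is contracted. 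The non-zero images are exactly the primitive ray generators of $\mn$ spanning the image cone $\ft(\tau)$, and they again form a lattice basis of $\Lambda_{\ft(\tau)}$. Since $\pi$ carries a lattice basis of $\Lambda_\tau$ onto this basis (together with some zeros), we get $\pi(\Lambda_\tau)=\Lambda_{\ft(\tau)}$, which is the required surjectivity; combining this with Lemma \ref{thirdaxiomft0} and Proposition \ref{forgetful} then yields the corollary.

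The one genuinely substantive point — and the step I expect to be the main obstacle — is this last lattice statement: upgrading surjectivity on the real spans $V_\tau\rightarrow V_{\ft(\tau)}$, which is immediate from local surjectivity, to surjectivity on the integral lattices $\Lambda_\tau\rightarrow\Lambda_{\ft(\tau)}$. Its resolution rests on two structural facts about the moduli spaces: the unimodularity of the fine subdivision, i.e.\ that the primitive ray generators of each cone of $\mn$ (respectively $\mnp$) form a lattice basis of its sublattice, and the combinatorial description of $\ft$ showing that it carries primitive ray generators to primitive ray generators or to zero. Once these two facts are in place, condition (2) becomes the trivial observation that a linear map carrying a lattice basis to a set whose non-zero members form a lattice basis of the target is surjective onto that target lattice.
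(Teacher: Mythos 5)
Your proposal is correct and follows essentially the same route as the paper: the paper's proof simply combines Proposition \ref{forgetful} (condition (1)) with Lemma \ref{thirdaxiomft0} (local surjectivity) and dismisses condition (2) as clearly satisfied by the forgetful map, which is exactly your skeleton. Your explicit verification of condition (2) --- that $\ft$ sends the ray generators $v_{I\mid n+1}$ of each cone to ray generators $v_{I'\mid n}$ or to $0$, combined with the (true, standard) unimodularity of the cones of the combinatorial-type fan structure with respect to the lattice used for $Q_n$ --- is a legitimate filling-in of the step the paper calls ``clear''; just note that this cone structure is the one from \cite{GKM} rather than what this paper calls the fine subdivision $\mf(M)$, which is a strictly finer structure for $n\geq 5$.
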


We now want to define a marking on the forgetful map. To do that, we need a basis of the ambient space $Q_n$ of $\mn$. In \cite[section 2]{psiclasses} the authors construct a generating set in the way that we will shortly describe and it is easy to see (e.g.\ by induction on $n$, using the forgetful map) that it becomes a basis if we remove an arbitrary element.

For any $k \in \{1,\dots,n\}$, we set $$V_{k,n}:=V_k := \{v_I; k \notin I, \betrag{I} = 2\}.$$
For any $I_0 \subseteq \{1,\dots,n\}$ with $v_{I_0} \in V_k$ we define
$$V_{k,n}^{I_0}:=V_k^{I_0} := V_k \setminus \{v_{I_0}\}.$$

\begin{lemma}\label{families_lemma_basis}
 Let $v_I \in \mn, I \subseteq [n]$ and assume that $k \notin I$. Then we have
$$v_I = \begin{cases}
         &\sum_{J \subseteq I, v_J \in V_k^{I_0}} v_J, \textnormal{ if } I_0 \nsubseteq I \\
	 - &\sum_{J \nsubseteq I, v_J \in V_k^{I_0}} v_J, \textnormal{ otherwise}
        \end{cases}.$$
\begin{proof}
It was shown in \cite[lemma 2.4, lemma 2.7]{psiclasses} that $\sum_{w \in V_k} w = 0$ and that $v_I = \sum_{v_S \in V_k, S \subseteq I} v_S$. This implies the above equation.
\end{proof}
\end{lemma}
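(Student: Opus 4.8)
The plan is to reduce the claimed identity to the two facts quoted from \cite{psiclasses}, namely that $\sum_{w \in V_k} w = 0$ and that $v_I = \sum_{v_S \in V_k,\, S \subseteq I} v_S$ whenever $k \notin I$. The whole statement is just a bookkeeping consequence of these two relations once we pass from the generating set $V_k$ to the basis $V_k^{I_0} = V_k \setminus \{v_{I_0}\}$, so the only real content is tracking which summands survive when we substitute the relation $\sum_{w \in V_k} w = 0$ into the expansion $v_I = \sum_{v_S \in V_k,\, S \subseteq I} v_S$. I expect no genuine obstacle here; the work is entirely in carefully splitting sums according to whether $I_0 \subseteq I$.

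First I would treat the case $I_0 \nsubseteq I$. Here the hypothesis tells us that $v_{I_0}$ does not appear among the terms $v_S$ with $S \subseteq I$, because $v_{I_0} \in V_k$ but $I_0 \nsubseteq I$. Hence every $v_S$ occurring in the expansion $v_I = \sum_{v_S \in V_k,\, S \subseteq I} v_S$ already lies in $V_k^{I_0}$, and the sum is unchanged if we write it as $\sum_{J \subseteq I,\, v_J \in V_k^{I_0}} v_J$. This immediately gives the first branch of the formula.

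For the case $I_0 \subseteq I$, I would start from the same expansion $v_I = \sum_{v_S \in V_k,\, S \subseteq I} v_S$, which now does contain the term $v_{I_0}$. The idea is to eliminate $v_{I_0}$ using $\sum_{w \in V_k} w = 0$, i.e.\ $v_{I_0} = -\sum_{v_J \in V_k,\, J \neq I_0} v_J$. Substituting this into the expansion and collecting terms, the contributions $v_J$ with $J \subseteq I$ (other than $v_{I_0}$) cancel in pairs, leaving exactly $v_I = -\sum_{J \nsubseteq I,\, v_J \in V_k^{I_0}} v_J$. Concretely, splitting $V_k^{I_0}$ into the parts with $J \subseteq I$ and $J \nsubseteq I$ and comparing with $\sum_{w \in V_k} w = 0$ yields the second branch. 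The verification that the $J \subseteq I$ terms cancel is a routine combinatorial computation, and this step — organizing the cancellation — is the only place requiring minor care, so I would single it out as the part to write most explicitly.

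Finally I would remark that both cases together cover every $v_I$ with $k \notin I$, so the displayed piecewise formula holds as stated. Since the two input relations are valid for every admissible $k$ and $I_0$, the argument needs no further hypotheses beyond $k \notin I$ and $v_{I_0} \in V_k$, which are already assumed in the setup.
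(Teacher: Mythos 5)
Your proposal is correct and follows exactly the paper's argument: the paper likewise derives the formula from the two facts $\sum_{w \in V_k} w = 0$ and $v_I = \sum_{v_S \in V_k,\, S \subseteq I} v_S$ of \cite{psiclasses}, merely leaving the case split and cancellation you spell out as implicit bookkeeping. Your expanded verification of both cases is accurate, so there is nothing to correct.
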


For the following proposition, for each $i = 1,\dots,n$ we fix an arbitrary $I_0(i)$ with $v_{I_0(i)} \in V_{i,n}$ and write $W_{i,n} := V_{i,n}^{I_0(i)}$ for simplicity.

\begin{proposition}
\label{markingmn}
 There exists a tropical marking $s_i^\theta$ on the forgetful map such that, as a marked curve, the fibre over each point $p$ in $\mn$ is exactly the curve represented by that point. In particular, $(\mnp \stackrel{\ft}{\to} \mn, s_i^\theta)$ is a family of $n$-marked rational tropical curves.
\begin{proof}
Again, \cite[proposition 2.1.21]{disshannes} tells us that the fibre over each point is exactly the curve represented by that point (without markings).

The idea of the construction is the following: We define the marking on the basis curves $v_I$ by placing the mark on the $i$-th leaf with a fixed distance $\alpha$ from the vertex of the leaf. However, this cannot work globally: Linearity of the map implies that for some element $v_J$ not in the basis, the mark now actually moves \emph{towards} the vertex when moving outwards along the ray $\gnrt{v_J}$. Since the mark has to stay on the relative interior of the leaf, this means that the map is only feasible on the open subset of points that have distance less than $\alpha$ from the origin. We then cover $\mn$ by these subsets for appropriate $\alpha$ and obtain a marking.

For $\alpha\in \N_{>0}$ we define 
$$U_\alpha := \left\{\sum_{v_I \in \mn} \lambda_I v_I; \lambda_I \geq 0; \sum \lambda_I < \alpha\right\} \cap \betrag{\mn}.$$
Clearly $\{U_\alpha, \alpha \in \N_{>0}\}$ is a cover of $\mn$. Now pick any $\alpha \in \N_{>0}, i \in \{1,\dots,n\}$. We define 
$$s_i^\alpha: U_\alpha \to \mnp, v \mapsto \alpha \cdot v_{\{0,i\}} + A_i(v),$$
where $A_i: Q_n \to Q_{n+1}$ is the linear map defined by $A_i(v_I) = v_{I \mid n+1}$ for all $v_I \in W_{i,n}$. Note that in this proof the $v_I$ represent curves with markings in $\{1,\ldots,n\}$ and thus live in $Q_n$, whereas the $v_{I \mid n+1}$ correspond to curves with markings in $\{0,1,\ldots,n\}$ and thus live in $Q_{n+1}$. We have to show that this defines indeed a map into $\mnp$ and that it is a tropical marking. 

For this, we choose any $v_I \in \mn$ and assume without restriction that $i \notin I$, since $v_I = v_{I^c}$. By lemma \ref{families_lemma_basis} we have
$$v_I = \begin{cases}
         &\sum_{J \subseteq I, v_J \in W_{i,n}} v_J, \textnormal{ if } I_0 \nsubseteq I \\
	 - &\sum_{J \nsubseteq I, v_J \in W_{i,n}} v_J, \textnormal{ otherwise}
        \end{cases},$$
and similarly in $\mnp$:
\begin{align*}
v_{I \mid n+1} &= \left\{
		  \begin{aligned}
                    &\sum_{J \subseteq I, v_J \in W_{i,n+1}} v_J = \sum_{J \subseteq I, v_J \in W_{i,n}} v_{J\mid n+1}, &\textnormal{ if } I_0 \nsubseteq I \\
		   -&\sum_{J \nsubseteq I, v_J \in W_{i,n+1}} v_J = - \sum_{J \nsubseteq I, v_J \in W_{i,n}} v_{J\mid n+1} - \sum_{j \neq 0,i} v_{\{0,j\}}, &\textnormal{ otherwise}
		    \end{aligned}
                   \right. \\
		&= \left\{
		   \begin{aligned}
		    A_i(v_I), &\textnormal{ if } I_0 \nsubseteq I \\
		    A_i(v_I) + v_{\{0,i\}}, &\textnormal{ otherwise (since } \sum_{j=1}^n v_{\{0,j\}} = 0)
		    \end{aligned} \ \ .
		   \right.
\end{align*}
Summarising we obtain for $\lambda \in [0,\alpha)$:
$$s_i^\alpha(\lambda v_I) = \begin{cases}
                      \alpha v_{\{0,i\}} + \lambda v_{I \mid n+1}, &\textnormal{ if } I_0 \nsubseteq I \\
		      (\alpha-\lambda) v_{\{0,i\}} + \lambda v_{I \mid n+1}, &\textnormal{ otherwise}
                     \end{cases}.$$  

Now for an arbitrary $v = \sum \lambda_I v_I \in U_\alpha$ (where we can assume that all the $v_I$ with $\lambda_I \neq 0$ lie in the same maximal cone in $\mn$) we have 
$$s_i^\alpha(v) = \sum \lambda_I v_{I \mid n+1} + \underbrace{(\alpha - \sum_{I_0 \subseteq I} \lambda_I)}_{> 0} v_{\{0,i\}}.$$
In particular this is a vector in a leaf of the fibre of $v$ which as a set can be described as $\{\sum \lambda_I v_{I \mid n+1} + \gamma v_{\{0,i\}}, \gamma \geq 0\}$, and for different $i$ this marks a different leaf. Also it is clear that for different $\alpha, \alpha'$ and $v \in U_\alpha \cap U_{\alpha'}$, $s_i^\alpha$ and $s_i^{\alpha'}$ mark the same leaf. Hence the $s_i^\alpha$ define a tropical marking.
\end{proof}
\end{proposition}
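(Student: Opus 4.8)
The plan is to construct explicit markings on the forgetful map $\ft:\mnp\to\mn$ and verify they satisfy the three axioms of a tropical marking from definition \ref{family}. The guiding idea, as sketched in the proposition's statement, is that a marking must place the $i$-th marked point somewhere in the relative interior of the $i$-th leaf of each fibre. On the fibre over a point $v\in\mn$, the $i$-th leaf is the ray emanating in direction $v_{\{0,i\}}$ (this follows from the fibre computation in proposition \ref{forgetful} and its supporting lemma, which identify the edges of the fibre over the origin as $\R_{\geq 0}\cdot v_{\{0,i\}}$). So I would define the section by translating the basepoint of the fibre out by a fixed amount $\alpha$ along $v_{\{0,i\}}$, while lifting the $\mn$-direction $v$ to $\mnp$ via a fixed linear map $A_i$ built from the basis $W_{i,n}$ of lemma \ref{families_lemma_basis}.

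First I would fix, for each $i\in[n]$, a basis $W_{i,n}=V_{i,n}^{I_0(i)}$ of $Q_n$ and define the linear map $A_i:Q_n\to Q_{n+1}$ on basis elements by $A_i(v_I)=v_{I\mid n+1}$. The heart of the computation is to express the lift $v_{I\mid n+1}$ of an arbitrary ray generator $v_I$ in terms of $A_i(v_I)$: using lemma \ref{families_lemma_basis} in both $\mn$ and $\mnp$ together with the relation $\sum_{j=1}^n v_{\{0,j\}}=0$, one obtains that $v_{I\mid n+1}$ equals either $A_i(v_I)$ or $A_i(v_I)+v_{\{0,i\}}$, according to whether $I_0(i)\subseteq I$. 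This dichotomy is the key algebraic fact. From it I would read off the formula for $s_i^\alpha(\lambda v_I)$ along a single ray, and then, because the map is linear on each maximal cone, assemble the general formula $s_i^\alpha(v)=\sum\lambda_I v_{I\mid n+1}+(\alpha-\sum_{I_0\subseteq I}\lambda_I)\,v_{\{0,i\}}$ for $v=\sum\lambda_I v_I$.

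The crucial point is that the coefficient $\alpha-\sum_{I_0\subseteq I}\lambda_I$ of $v_{\{0,i\}}$ must be \emph{strictly positive}, so that $s_i^\alpha(v)$ lands in the relative interior $l_i^\circ$ of the $i$-th leaf rather than at its vertex. This is exactly what forces the restriction to the open sets $U_\alpha=\{\sum\lambda_I v_I:\lambda_I\geq 0,\ \sum\lambda_I<\alpha\}\cap|\mn|$, which form an open cover of $\mn$ indexed by $\alpha\in\N_{>0}$. On $U_\alpha$ the coefficient is bounded below by $\alpha-\sum\lambda_I>0$, so positivity holds. With this in hand I would check the three marking axioms: axiom (1), that $\ft\circ s_i^\alpha=\id_{U_\alpha}$, is immediate since $\ft$ is induced by the projection forgetting the $0$-th marking and $\ft(v_{I\mid n+1})=v_I$, $\ft(v_{\{0,i\}})=0$; axiom (2), that distinct $i$ mark distinct leaves, follows because the leaf containing $s_i^\alpha(v)$ is precisely the ray in direction $v_{\{0,i\}}$ and these directions are distinct for distinct $i$; and axiom (3), compatibility on overlaps $U_\alpha\cap U_{\alpha'}$, follows because both $s_i^\alpha(v)$ and $s_i^{\alpha'}(v)$ differ only in their $v_{\{0,i\}}$-coefficient and hence lie on the same leaf.

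I expect the main obstacle to be the bookkeeping in the dichotomy computation of $v_{I\mid n+1}$ versus $A_i(v_I)$ — in particular, correctly tracking the extra terms $\sum_{j\neq 0,i}v_{\{0,j\}}$ that appear in the ``$I_0\subseteq I$'' case and recognising, via $\sum_{j=1}^n v_{\{0,j\}}=0$, that they collapse into a single $v_{\{0,i\}}$ correction. Everything downstream (the positivity argument and the three axioms) is then essentially formal, so the real content is getting this lifting identity exactly right.
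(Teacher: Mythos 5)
Your proposal is correct and follows essentially the same route as the paper: the same open cover $U_\alpha$, the same linear lift $A_i$ on the basis $W_{i,n}$, the same dichotomy $v_{I\mid n+1}=A_i(v_I)$ versus $A_i(v_I)+v_{\{0,i\}}$ depending on whether $I_0(i)\subseteq I$, and the same positivity argument for the $v_{\{0,i\}}$-coefficient to verify the marking axioms. The only cosmetic difference is that the paper cites an external result to identify the unmarked fibre with the curve represented by the point, while you derive the leaf description from proposition \ref{forgetful}; the substance is identical.
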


We will now prove that any two markings on the forgetful map only differ by a permutation on $\{1,\dots,n\}$.

\begin{proposition}\label{families_prop_equivalence}
For any two families of tropical curves of the form $$(\mnp \stackrel{\ft_0}{\to} \mn, (s_i^{\theta})), (\mnp \stackrel{\ft_0}{\to} \mn, (r_i^{\zeta})),$$ there exist isomorphisms $\phi: \mn \to \mn$ and $\psi: \mnp \to \mnp$ such that $\ft_0 \circ \psi = \phi \circ \ft_0$ and such that for any $b$ in $\mn$, $\psi$ identifies equally marked leaves of $\ft_0^*(b)$ and $\ft_0^*(\phi(b))$ in the two families. Furthermore, $\phi,\psi$ are induced by permutations on the coordinates of $\R^{\binom{n}{2}}$ and $\R^{\binom{n+1}{2}}$ respectively.
\begin{proof}
 We can assume without restriction that both markings $(s_i^{\theta}), (r_i^{\theta})$ are defined on the same open subsets $U_\theta$. Since they are tropical markings, if we choose $\theta$ such that $0 \in U_\theta$, we must have for all $i$ that $$s_i^{\theta}(0) = \lambda_i^\theta v_{\{0,\sigma_1(i)\}};\; r_i^{\theta}(0) = \rho_i^\theta v_{\{0,\sigma_2(i)\}}$$ 
for some permutations $\sigma_1,\sigma_2\in \Sn_n , \lambda_i^\theta,\rho_i^\theta > 0.$ Note that by definition of a marking, $\sigma_1,\sigma_2$ are independent of the choice of $\theta$.

We can extend $\sigma_1,\sigma_2$ to bijections $\bar{\sigma}_1,\bar{\sigma}_2$ on $\{0,1,\dots ,n\}$ by setting $\bar{\sigma}_1(0) = \bar{\sigma}_2(0) = 0$. 
These bijections induce automorphisms of $\R^{\binom{n+1}{2}}$ and $\R^{\binom{n}{2}}$ given by
$$e_{\{i,j\}} \mapsto e_{\{(\bar{\sigma}_2 \circ \bar{\sigma}_1^{-1})(i), \bar{\sigma}_2\circ \bar{\sigma}_1^{-1})(j)\}},$$
which map $\Im(\phi)$ to $\Im(\phi)$  and thus give rise to automorphisms
\[
 \psi: \mnp \to \mnp,\ \ \ \
 \phi: \mn \to \mn.
\]
Since the $0$-mark which is discarded by $\ft_0$ is not affected by $\sigma_1,\sigma_2$ we conclude that $\ft_0 \circ \phi = \psi \circ \ft_0$. We will now prove compatibility with markings for ray vectors $v_I$:

Let $v_I \in U_\zeta \subseteq |\mn|$ with $i \notin I$ and assume $\phi^{-1}(v_I) = v_{(\sigma_1 \circ \sigma_2^{-1})(I)} \in U_\theta \subseteq |\mn|$. Then we have
$$r_i^\zeta(v_I) = v_{I \mid n+1} + \lambda \cdot v_{\{0, \sigma_2(i)\}}$$
for some $\lambda$ and
\begin{align*}
 (\psi \circ s_i^\theta \circ \phi^{-1})(v_I) 
	    &= (\psi \circ s_i^\theta)(v_ {(\sigma_1 \circ \sigma_2^{-1})(I)}) \\
	    &= \phi(v_{(\sigma_1 \circ \sigma_2^{-1})(I) \mid n+1} + \rho \cdot v_{\{0,\sigma_1(i)\}}) \textnormal{ for some } \rho\\
	    &= v_{(\sigma_2 \circ \sigma_1^{-1} \circ \sigma_1 \circ \sigma_2^{-1})(I) \mid n+1} + \rho \cdot v_{\{0, (\sigma_2 \circ \sigma_1^{-1} \circ \sigma_1)(i)\}} \\
	    &= v_{I \mid n+1} + \rho \cdot v_{\{0,\sigma_2(i)\}}
\end{align*}
which lies on the same leaf as $r_i^\zeta(v_I)$. For an arbitrary vector $v = \sum \alpha_I v_I$ the same argument can be applied by linearity of $\phi$.
\end{proof}
\end{proposition}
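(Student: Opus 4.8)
The plan is to show that two markings on a single forgetful family can differ only by a relabelling of the $n$ leaves, and to realise that relabelling by a coordinate permutation on the ambient spaces $Q_n$ and $Q_{n+1}$. The permutation data will be read off from the fibre over the origin, and the whole point will be to argue that this data in fact governs the leaf-labelling over all of $\mn$.

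First I would extract the relabelling from $\ft_0^*(0)$. This fibre is the star curve whose leaves are the rays $\R_{\geq 0}\cdot v_{\{0,j\}}$, $j\in[n]$, where placing the forgotten $0$-mark at infinity along the $j$-th leaf produces $v_{\{0,j\}}$. Since by the second marking axiom each section lands in the relative interior of a leaf and distinct labels mark distinct leaves, for any chart $U_\theta$ with $0\in U_\theta$ we must have $s_i^\theta(0)=\lambda_i^\theta\,v_{\{0,\sigma_1(i)\}}$ and $r_i^\theta(0)=\rho_i^\theta\,v_{\{0,\sigma_2(i)\}}$ for positive scalars and permutations $\sigma_1,\sigma_2\in\Sn_n$. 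The third marking axiom forces $\sigma_1,\sigma_2$ to be independent of the chart, so they are genuine invariants of the two markings.

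Next I would build the isomorphisms. Extending $\sigma_1,\sigma_2$ to bijections $\bar\sigma_1,\bar\sigma_2$ of $\{0,1,\dots,n\}$ by fixing $0$, the coordinate permutation $\tau:=\bar\sigma_2\circ\bar\sigma_1^{-1}$ acts on $\R^{\binom{n+1}{2}}$ via $e_{\{i,j\}}\mapsto e_{\{\tau(i),\tau(j)\}}$. This is a lattice automorphism that sends $\Im(\phi_{n+1})$ to itself and permutes the ray generators by $v_I\mapsto v_{\tau(I)}$; hence it descends to $Q_{n+1}$ and restricts to a tropical automorphism $\psi:\mnp\to\mnp$ (it maps cones of the fan structure to cones, preserving weights), and the analogous construction on $\{1,\dots,n\}$ gives $\phi:\mn\to\mn$ with $\phi^{-1}(v_I)=v_{(\sigma_1\circ\sigma_2^{-1})(I)}$. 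Because $\tau$ fixes the index $0$ that $\ft_0$ discards, the coordinate projection defining the forgetful map commutes with $\tau$, giving $\ft_0\circ\psi=\phi\circ\ft_0$. Moreover $\psi$ sends $v_{\{0,\sigma_1(i)\}}$ to $v_{\{0,\sigma_2(i)\}}$, so over the origin it maps the leaf carrying label $i$ in the first family exactly to the leaf carrying label $i$ in the second.

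It remains to upgrade this leaf-matching from the origin to every point of $\mn$, which I expect to be the main obstacle. The key observation is that each section is an affine-linear map tracking a single leaf of the fibres, and the $n$ leaves of every fibre carry intrinsic labels $1,\dots,n$ that are globally constant on $\mn$; combined with marking axioms (2) and (3) this shows that the leaf marked by label $i$ over a ray point $v_I$ is still the one in direction $v_{\{0,\sigma_2(i)\}}$, so that $r_i^\zeta(v_I)=v_{I\mid n+1}+\lambda\,v_{\{0,\sigma_2(i)\}}$ for some $\lambda$ (using that $\ft_0$ kills the leaf-directions $v_{\{0,j\}}$ while sending $v_{I\mid n+1}$ to $v_I$). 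Then a direct computation of $(\psi\circ s_i^\theta\circ\phi^{-1})(v_I)$, feeding in $\phi^{-1}(v_I)=v_{(\sigma_1\circ\sigma_2^{-1})(I)}$ and the index-permutation performed by $\psi$, shows the two composites land on the same leaf of $\ft_0^*(v_I)$. Since the $v_I$ generate $Q_n$ and every map in sight is linear, the matching extends by linearity to arbitrary points. The delicate step is precisely the claim that a section cannot silently switch the leaf it marks as the base point moves between cones, i.e. that the permutations read off at the origin control the leaf-labelling over the entire moduli space.
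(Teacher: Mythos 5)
Your proposal is correct and follows essentially the same route as the paper's proof: reading off the permutations $\sigma_1,\sigma_2$ from the fibre $\ft_0^*(0)$, inducing $\phi$ and $\psi$ from the coordinate permutation $\bar{\sigma}_2\circ\bar{\sigma}_1^{-1}$ (which fixes the index $0$, giving $\ft_0\circ\psi=\phi\circ\ft_0$), verifying the leaf-matching on ray vectors $v_I$ by the same computation, and extending by linearity. Your explicit treatment of the ``delicate step'' -- that the intrinsic leaf label marked by a section is locally constant by affine linearity and globally constant by marking axiom (3), so the permutations read off at the origin govern all of $\mn$ -- is in fact spelled out more carefully than in the paper, which simply asserts $r_i^\zeta(v_I)=v_{I\mid n+1}+\lambda\cdot v_{\{0,\sigma_2(i)\}}$.
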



We now apply our theory to assign a family of $n$-marked curves to each morphism from a smooth cycle to $\mn$. Let us first introduce some notation.

\begin{notation}
\label{Xf}
Let $X$ be a smooth variety and $f:X\rightarrow\mn$ a morphism. Then we denote by $X^f$ the fibre product \[X^f:=X\times_{\mn} \mnp\in\Zy_{\dim X+1}(X\times\mnp).\]
\end{notation}

We conclude in the following corollary that the projection $\pi_X:X^f\rightarrow X$ is a family of $n$-marked curves.

\begin{corollary}\label{families_cor_pullback}
For each morphism of smooth varieties $X \stackrel{f}{\to} \mn$, we obtain a family of $n$-marked rational curves as
$$(X^f  \stackrel{\pi_X}{\to} X, t_i^\alpha),$$
where $t_i^\alpha: f^{-1}(U_\alpha) \to X^f, x \mapsto (x, s_i^\alpha \circ f (x))$ and $s_i^\alpha$ is the marking on the universal family from proposition \ref{markingmn}.
\end{corollary}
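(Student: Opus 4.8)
The plan is to verify directly that $(X^f \stackrel{\pi_X}{\to} X, t_i^\alpha)$ meets every requirement of Definition \ref{family}, assembling the facts already proved about the fibre product $X^f = X\times_{\mn}\mnp$ and about the forgetful family $(\mnp\stackrel{\ft}{\to}\mn, s_i^\alpha)$. First I would record the structural prerequisites: $X$ is smooth by hypothesis, $X^f$ is a tropical variety by Proposition \ref{weight1}, and $\pi_X$ is locally surjective by Proposition \ref{thirdaxiompi}. Hence $\pi_X\colon X^f\to X$ is a locally surjective morphism of tropical varieties over a smooth base, which is the ambient setting required for a prefamily.

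Next I would check the two prefamily axioms. Applying Proposition \ref{fibreXf} with $f'=\ft$ gives $\pi_X^*(b)=\{b\}\times\ft^*(f(b))$ for every point $b$ in $X$, and Proposition \ref{forgetful} identifies $\ft^*(f(b))$ as a smooth rational curve with exactly $n$ unbounded edges; since the factor $\{b\}$ is a point, axiom (1) follows. For axiom (2) I would choose polyhedral structures of $X$, $\mnp$ and $\mn$ along which $f$ and $\ft$ map cells to cells (Lemma \ref{flatlemma}), so that the cells of $X^f$ have the form $\tau=\sigma\times_{\mn}\sigma'$. Given a lattice vector $w\in\Lambda_{\pi_X(\tau)}$, its image under the linear part of $f$ lies in $\Lambda_{\ft(\sigma')}$, because $f\circ\pi_X=\ft\circ\pi_{\mnp}$ on $\tau$; since the forgetful map is itself a prefamily, its axiom (2) produces a lattice vector $v'\in\Lambda_{\sigma'}$ whose $\ft$-image equals the $f$-image of $w$, and then $(w,v')$ is a lattice vector of $\tau$ projecting onto $w$. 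This shows that $\lambda_{\pi_X\mid\tau}$ is surjective on lattices.

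It remains to treat the marking. The sets $f^{-1}(U_\alpha)$ form an open cover of $X$ because $f$ is continuous and the $U_\alpha$ cover $\mn$, and each $t_i^\alpha\colon x\mapsto(x,s_i^\alpha(f(x)))$ is integer affine linear; it lands in $X^f$ since $\ft\circ s_i^\alpha=\id$ forces $\ft(s_i^\alpha(f(x)))=f(x)$, and $\pi_X\circ t_i^\alpha=\id$ is immediate, giving marking axiom (1). Because the leaves of $\pi_X^*(b)=\{b\}\times\ft^*(f(b))$ are exactly $\{b\}$ times the leaves of $\ft^*(f(b))$, marking axioms (2) and (3) for the $t_i^\alpha$ at a point $b$ reduce word for word to the corresponding statements for the marking $s_i^\alpha$ of the universal family (Proposition \ref{markingmn}) at the point $f(b)$; in particular the bijection between marks and leaves and the compatibility on overlaps $f^{-1}(U_\alpha)\cap f^{-1}(U_{\alpha'})$ are inherited.

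I expect prefamily axiom (2) to be the only step requiring genuine argument: local surjectivity of $\pi_X$ supplies only surjectivity of the star maps over $\R$, so the integrality of the lift must be extracted separately. The delicate point is that the pair $(w,v')$ really lies in the lattice $\Lambda_\tau$ of the fibre-product cell, and this is where one uses that, by local surjectivity, $\sigma\times_{\mn}\sigma'$ has the expected direction space $\{(s,s'):\lambda_{f}(s)=\lambda_{\ft}(s')\}$ (cf.\ the remark following Lemma \ref{settheoreticfibre}) together with axiom (2) for $\ft$ on all cells, including those where $\pi_X$ is injective (cf.\ Remark \ref{families_remark_second}). Everything else is a direct transfer of the already-established properties of the fibre product and the universal family.
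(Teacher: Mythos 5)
Your proposal is correct and follows essentially the same route as the paper's proof, which likewise assembles Propositions \ref{weight1}, \ref{thirdaxiompi}, \ref{fibreXf} and \ref{forgetful} and then simply declares the second prefamily axiom and the marking properties ``obvious.'' Your only departure is to spell out those declared-obvious steps --- lifting lattice vectors through axiom (2) for $\ft$ using the direction space $\{(s,s'):\lambda_f(s)=\lambda_{\ft}(s')\}$ of a cell $\sigma\times_{\mn}\sigma'$, and reducing the marking axioms to those of $s_i^\alpha$ via $\pi_X^*(b)=\{b\}\times\ft^*(f(b))$ --- and these verifications are sound.
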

\begin{proof}
The cycle $X^f$ is a tropical variety by proposition \ref{weight1} and $\pi_X$ is locally surjective by proposition \ref{thirdaxiompi}. Each fibre $\pi_X^* (p)=\{p\}\times \ft^*(f(p))$ is a smooth rational curve with $n$ leaves by propositions \ref{fibreXf} and \ref{forgetful}. It is obvious that $\pi_X$ satisfies the second prefamily axiom and that $t_i^\alpha$ is indeed a marking.
\end{proof}

\begin{example}
We finish the section by introducing an alternative way of constructing the moduli spaces $\mn$. Let us briefly recall the notion of tropical modifications introduced in \cite[section 3.3]{mikh} and used in this construction. The modification of a cycle $X$ in $V$ along the rational function $\varphi$ on $X$ is the cycle $$\Gamma_{\varphi,X}:=\max\{\pi_X^*\varphi,y\}\cdot X\times \R,$$ where $\pi_X: X\times \R \rightarrow X$ is the projection to $X$ and $y$ is the coordinate describing $\R$. In other words, the modification is the graph of $\varphi$ made balanced by adding cells in the direction $(0,-1)\in V\times \R$. If $Y=\varphi\cdot X$ one says, by slight abuse of notation, that $\Gamma_{\varphi,X}$ is the modification of $X$ along $Y$.

We prove in the following proposition that $\mathcal{M}_{n+2}$ is the modification of
the fibre product $\mnp\times_{\mn} \mnp$ along the codimension $1$ subcycle $\Delta_{\mnp}$. This leads to an alternative procedure of constructing $\mn$ which is of course very similar to construction of the classical moduli spaces $\overline{M}_{0,n}$ in \cite[section 1.4]{KV}.

Our proof uses the fact that $\mn$ is isomorphic to $\trop(K_{n-1})/L$ and the connection between tropical modifications and the matroid-theoretic concepts of deletions and contractions.
If $S$ is the set of flats of a matroid $M$ and $e\in E(M)$, then the set of flats of the deletion $M\setminus e$ is $\{F\setminus \{e\}:F\in S\}$, whereas the set of flats of the contraction $M/e$ is $\{F: F\cup \{e\} \in S\}$. In the case that $e$ is not a coloop, the matroid variety $\trop(M)$ is the modification of $\trop(M\setminus e)$ along $\trop(M / e)$ 
(cf.\ \cite[proposition 2.24]{shaw} or \cite[proposition 3.10]{francoisrau}).
\end{example}

\begin{proposition}
Let $\pi_{\mnp}:\mnp^{\ft}\rightarrow\mnp$ be the family of $n$-marked curves induced by the forgetful map $\ft:\mnp\rightarrow \mn$. Then the modification of $\mnp^{\ft}$ along its codimension $1$ subcycle $\Delta_{\mnp}$ is the moduli space of $(n+2)$-marked abstract rational curves $\mathcal{M}_{n+2}$.
\end{proposition}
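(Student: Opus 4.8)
The plan is to realise all three moduli spaces as matroid fans and to apply the deletion--contraction description of tropical modifications recalled just above to a single, well-chosen edge. Write $\mn\cong\trop(K_{n-1})/L$, $\mnp\cong\trop(K_n)/L$ and $\mathcal{M}_{n+2}\cong\trop(K_{n+1})/L$ with $L=\R\cdot(1,\dots,1)$, where the vertex sets are chosen compatibly as $\{1,\dots,n-1\}$, $\{0,1,\dots,n-1\}$ and $\{0,0',1,\dots,n-1\}$ and the marking $n$ plays the role of the distinguished end in each case. With these labellings the two ends forgotten by the two projections $\mathcal{M}_{n+2}\to\mnp$ are $0$ and $0'$, and the forgetful map $\ft$ is induced by deleting the corresponding vertex, exactly as in section \ref{section_prelim}. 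I single out the edge $e:=\{0,0'\}$ of $K_{n+1}$. Since $n\geq 3$, the graph $K_{n+1}\setminus e$ is still $2$-connected, so $e$ is not a coloop and the deletion--contraction statement gives that $\trop(K_{n+1})$ is the modification of $\trop(K_{n+1}\setminus e)$ along $\trop(K_{n+1}/e)$; passing to the quotient by $L$ this says that $\mathcal{M}_{n+2}$ is the modification of $\trop(K_{n+1}\setminus e)/L$ along $\trop(K_{n+1}/e)/L$. It therefore remains to identify the deletion with the fibre product and the contraction with the diagonal.

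For the deletion, observe that $K_{n+1}\setminus e$ is exactly the two copies of $K_n$ (on $\{0,1,\dots,n-1\}$ and on $\{0',1,\dots,n-1\}$) glued along their common $K_{n-1}$ on $\{1,\dots,n-1\}$; its two coordinate projections onto the edges of the two $K_n$'s are the two forgetful maps, and they share precisely the coordinates $x_{i,j}$ with $i,j\in\{1,\dots,n-1\}$. I claim that $\trop(K_{n+1}\setminus e)/L=\mnp\times_{\mn}\mnp$ as cycles. By theorem \ref{fibreproduct} the support of the right hand side is $\{(x,x'):\ft(x)=\ft(x')\}$, so it suffices to match supports. The inclusion ``$\subseteq$'' is immediate: every circuit of either $K_n$ is a circuit of $K_{n+1}\setminus e$, so the restriction of a point of $\trop(K_{n+1}\setminus e)$ to each copy lies in $\trop(K_n)$, and the two restrictions agree on the shared coordinates and hence have the same image in $\mn$. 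The reverse inclusion is the technical point: one must verify that the circuit conditions coming from the \emph{mixed} cycles of $K_{n+1}\setminus e$ (those through both $0$ and $0'$, the shortest being the $4$-cycles $0,i,0',j$) are automatically implied by the circuit conditions of the two $K_n$'s. For a $4$-cycle with edge coordinates $x_{0,i},x_{0',i},x_{0',j},x_{0,j}$ this is a direct check: if one of these four were a strict minimum, then, using the shared coordinate $x_{i,j}$, one of the two triangle conditions, namely that $\min(x_{0,i},x_{0,j},x_{i,j})$ resp.\ $\min(x_{0',i},x_{0',j},x_{i,j})$ be attained at least twice, would be violated; the general mixed cycle reduces to this situation, reflecting the fact that the common $K_{n-1}$ is a modular flat of each $K_n$. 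Finally the weights agree: $\trop(K_{n+1}\setminus e)/L$ is an irreducible matroid variety of weight $1$ while $\mnp\times_{\mn}\mnp$ has only positive weights by proposition \ref{weight1}, so the two differ by a positive multiple which equals $1$ because the generic fibre of $\pi_{\mnp}$ is $\{p\}\times\ft^*(\ft(p))$ and $\ft^*(\ft(p))$ has weight $1$ by proposition \ref{forgetful}.

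For the contraction, contracting $e=\{0,0'\}$ identifies the vertices $0$ and $0'$ and thereby turns $x_{0,i}$ and $x_{0',i}$ into a parallel pair for every $i\in\{1,\dots,n-1\}$; hence $M(K_{n+1}/e)$ is the graphic matroid of $K_n$ with each edge at the merged vertex doubled. As the generators $V_F=-\sum_{k\in F}e_k$ of a matroid fan have equal entries in any two parallel coordinates, $\trop(K_{n+1}/e)$ lies in the diagonal $\{x_{0,i}=x_{0',i}\ \text{for all }i\}$ and, after deleting one coordinate of each parallel pair, is isomorphic to $\trop(K_n)$. Under the identification of the previous paragraph this diagonal is exactly $\Delta_{\mnp}=\{(x,x)\}\subseteq\mnp\times_{\mn}\mnp$, and both carry weight $1$. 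Combining the two identifications with the first paragraph, the deletion--contraction modification becomes the modification of $\mnp\times_{\mn}\mnp$ along $\Delta_{\mnp}$, and it equals $\mathcal{M}_{n+2}$.

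I expect the main obstacle to be the reverse inclusion in the identification of the deletion with the fibre product, i.e.\ showing that the set-theoretic fibre product is contained in $\trop(K_{n+1}\setminus e)/L$: while the easy inclusion and the weight computation are routine, ruling out the mixed-cycle circuit conditions as independent constraints is the heart of the matter. The identification of the contraction with the diagonal, by contrast, is essentially formal once one observes the parallel-element structure produced by contracting $e$.
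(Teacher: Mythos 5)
Your proposal is correct and follows the same global strategy as the paper's proof: realise $\mathcal{M}_{n+2}$, $\mnp^{\ft}$ and $\Delta_{\mnp}$ matroidally, single out the edge joining the two ``extra'' vertices (your $\{0,0'\}$ is the paper's $(0,n)$ in the labelling $\{0,1,\dots,n\}$), and match deletion with the fibre product and contraction with the diagonal; your contraction step is essentially verbatim the paper's (the paper phrases your parallel-pair observation as: $F\cup\{(0,n)\}$ being a flat of $K_{n+1}$ forces $(0,i)\in F$ iff $(i,n)\in F$, so $f(V_F)$ lies in the diagonal, and then compares two matroid varieties modulo lineality of equal dimension). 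The one genuine divergence is how the support identity for the deletion is established. The paper avoids your ``mixed cycle'' analysis entirely: it factors the coordinate projection as $f\circ\pi_{(0,n)}=(\pi_n,\pi_0)$ and computes $f_*\trop(M(K_{n+1})\setminus(0,n))=(\pi_n,\pi_0)_*\trop(K_{n+1})$, so the support is the image of $\trop(K_{n+1})$ under the pair of forgetful projections, which fills the set-theoretic fibre product for the geometrically evident reason that two $(n+1)$-marked curves agreeing after forgetting one end each can be simultaneously lifted to an $(n+2)$-marked curve. Your direct circuit verification is a sound and more self-contained substitute: the chord reduction does work, since every mixed cycle of length at least five has a chord avoiding the non-edge $\{0,0'\}$, and even your square $0,i,0',j$ has the chord $ij$, so everything collapses to triangle conditions in the two copies of $K_n$ --- only note that with the paper's convention $V_F=-\sum_{i\in F}e_i$ the circuit condition reads ``maximum attained at least twice'', so your min/max should be flipped (harmless, as the argument is symmetric under a global sign change). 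Your weight normalisation (irreducibility plus weight $1$ of the generic fibre via propositions \ref{fibreXf} and \ref{forgetful}) is in fact more explicit than the paper's terse ``as all occurring weights are $1$''; it implicitly uses the lattice surjectivity from the second prefamily axiom, and you could shortcut it by citing lemma \ref{lambdasurjective}, which says directly that all maximal cells of a family have weight $1$. Finally, both you and the paper pass the deletion--contraction modification statement through the quotient by $L$ without comment, so your proof is at the same level of rigour as the original on that point.
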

\begin{proof}
Let $K_{n+1}$ be the complete graph on the vertex set $\{0,1,\ldots,n\}$. It suffices to prove that $\mnp^{\ft}$ is isomorphic to $\trop(M(K_{n+1})\setminus (0,n))/L$ and that $\Delta_{\mnp}$ is isomorphic to $\trop(M(K_{n+1})/ (0,n))/L$, where $L=\R\cdot (1,\ldots, 1)$ and $(0,n)$ denotes the edge between $0$ and $n$. We consider the injective linear map
\begin{eqnarray*}
f:  \R^{\binom{n+1}{2}-1}&\rightarrow &\R^{\binom{n}{2}}\times\R^{\binom{n}{2}} \\   (x_{i,j})_{0\leq i<j\leq n: (i,j)\neq (0,n)} &\mapsto & ((x_{i,j})_{0\leq i<j\leq n-1},(x_{i,j})_{1\leq i<j\leq n}) .
\end{eqnarray*}
Let $\pi_0, \pi_n:\R^{\binom{n}{2}}\rightarrow\R^{\binom{n-1}{2}}$ be the projections that forget all coordinates $x_{0,i}$ and $x_{i,n}$ respectively; in other words, they describe the forgetful maps $\ft_0,\ft_n$. Let $\pi_{(0,n)}:\R^{\binom{n+1}{2}}\rightarrow\R^{\binom{n+1}{2}-1}$ be the projection which forgets the coordinate $x_{0,n}$. With these notations we obviously have $f\circ\pi_{(0,n)}=(\pi_n,\pi_0)$. Thus we obtain
\[f_*\trop(M(K_{n+1})\setminus (0,n)) = f_*{\pi_{(0,n)}}_*\trop(K_{n+1}) = (\pi_n,\pi_0)_* \trop(K_{n+1}).\]
Therefore, we can conclude that
\[ |f_*\trop(M(K_{n+1})\setminus (0,n))|= \{ (x,y)\in\trop(K_n)\times\trop(K_n): \pi_0(x)=\pi_n(y)\}.\]
Here the first complete graph $K_n$ has vertex set $\{0,1,\ldots,n-1\}$, whereas the second has vertex set $\{1,\ldots,n\}$. As all occurring weights are $1$, it follows by theorem \ref{fibreproduct} that $f_*\trop(M(K_{n+1})\setminus (0,n))/L$ is isomorphic to $\mnp^{\ft}$.\\
In order to prove the second part we notice that $\trop(M(K_{n+1})/ (0,n))/L$ and $\Delta_{\mnp}$ are both matroid varieties modulo lineality spaces and have the same dimension. Therefore, it suffices to show that for every flat of $M(K_{n+1})/(0,n)$, $f(V_F)$ is in the diagonal of $\trop(K_n)\times\trop(K_n)$ after identifying the coordinates $x_{0,i}$ of the first $\R^{\binom{n}{2}}$ with the coordinates $x_{i,n}$ of the second to obtain the same set of coordinates in both factors. If $F$ is a flat of $M(K_{n+1})/(0,n)$, then $F\cup \{(0,n)\}$ is a flat in $K_{n+1}$; but this implies that $(0,i)\in F$ if and only if $(i,n)\in F$. Hence $f(V_F)$ lies in the diagonal.
\end{proof}

\section{The fibre morphism}\label{section_fibre_morphism}
\label{fibremorphism}

We now want to construct a morphism into $\mn$ for a given family $T \stackrel{g}{\to} B$ (we will omit the marking to make the notation more concise). It is actually already clear what this map should look like: It should map each $b$ in $B$ to the point in $\mn$ that represents the fibre over $b$. For the pull-back family $X^f$ defined above this gives us back the map $f$. For an arbitrary family however, it is not even clear that it is a morphism. In fact, we will only show that it is a so-called \emph{pseudo-morphism} and then use the fact that $B$ is smooth to deduce that it is a morphism.

\begin{definition}[The fibre morphism]
\label{deffibremorph}
For a family $T \stackrel{g}{\to} B$ we define a map 
$$d_g: B \to \R^{\binom{n}{2}}: b \mapsto (\dist_{k,l}(g^*(b)))_{k < l},$$
where the length of the path from leaf $k$ to leaf $l$ on the fibre is determined in the following way: The length of a bounded edge $E = \conv\{p,q\}$ is defined to be the positive real number $\alpha$ such that $q = p + \alpha \cdot v$, where $v$ is the primitive lattice vector generating that edge.

We define $\varphi_g := q_n \circ d_g: B \to \mn$, where $q_n:\R^{\binom{n}{2}}\rightarrow\R^{\binom{n}{2}}/\Im(\phi_n)$ is the quotient map and $\phi_n$ maps $x\in\R^n$ to $(x_i+x_j)_{i<j}$.
\end{definition}

As mentioned above, we will not be able to prove directly that $\varphi_g$ is a morphism. But we can show that, in addition to being piecewise linear, it respects the balancing equations of $B$. Let us make this precise:

\begin{definition}[Pseudo-morphism]
 A map $f: X \to Y$ of tropical cycles is called a \emph{pseudo-morphism} if there is a polyhedral structure $\curlyx$ of $X$  such that:
\begin{enumerate}
 \item $f_{\mid \tau}$ is integer affine linear for each $\tau \in \curlyx$
 \item $f$ respects the balancing equations of $X$, i.e.\ for each $\tau \in \curlyx^{(\dim X- 1)}$ if $\bar{f}$ denotes the induced piecewise affine linear map on $\Star_X(\tau)$ (cf.\ \cite[section 1.2.3]{disshannes}), we have $$\sum_{\sigma > \tau} \omega_X(\sigma) \bar{f}(u_{\sigma/\tau}) = 0 \in V / V_{f(\tau)}.$$ 
\end{enumerate}
As for a morphism, we denote by $\lambda_{f \mid \tau}$ the linear part of $f$ on $\tau$.
\end{definition}

\begin{remark}
 We can reformulate the second condition as follows: If we choose a $v_\sigma \in \sigma$ for each $\sigma > \tau$ and $p_0,...,p_d \in \tau$ a basis of $V_\tau$ such that $\overline{v_\sigma - p_0} = u_{\sigma/\tau}$ and $\sum_{\sigma > \tau} \omega_X(\sigma) (v_\sigma - p_0) = \sum_{i=1}^d \alpha_i (p_i - p_0)$ with $\alpha_1,...\alpha_d \in \R$, then 
$$\sum_{\sigma > \tau} \omega_X(\sigma)(f(v_\sigma) - f(p_0)) = \sum_{i=1}^d \alpha_i (f(p_i) - f(p_0)).$$
Note that it suffices to check this condition for a single choice of $v_\sigma, p_0,...p_d$, since any other choice would only differ by elements from $V_\tau$, on which $f$ is affine linear. It is also clear that $f$ satisfies the above properties on any refinement of $\curlyx$ if and only if it does so for $\curlyx$.
\end{remark}

\begin{proposition}\label{fibremorphism_prop_smoothpseudo}
Let $X$ be a smooth tropical variety, $Y$ any tropical cycle and $f: X \to Y$ a pseudo-morphism. Then $f$ is a morphism.
\begin{proof}
It suffices to prove that each piecewise linear pseudo-morphism $f:\trop(M)\rightarrow Y$ from a matroid variety to a fan cycle is a linear map because being a morphism is a local property and we can lift any pseudo-morphism $\trop(M)/L\rightarrow Y$ to a pseudo-morphism $\trop(M)\rightarrow Y$. By deleting parallel elements we can assume that one element subsets of the ground set $E(M)$ are flats of $M$. It is easy to see that $f$ must be a pseudo-morphism with respect to the fan structure $\mf(M)$. Now we show by induction on the rank of the flats that for all flats $F$ we have $f(V_F)=\sum_{i\in F} f(V_{\{i\}})$. As the vectors $V_{\{i\}}$ are linearly independent this implies that $f$ is linear. Let $F$ be a flat of rank $r$. We choose a chain of flats of the form $\F=(\emptyset\subsetneq F_1\subsetneq\ldots\subsetneq F_{r-2}\subsetneq F \subsetneq F_{r+1}\subsetneq\ldots\subsetneq F_{\rank(M)}=E(M))$, with $\rank(F_i)=i$. The fact that $f$ is a pseudo-morphism translates the balancing condition around the facet $\F$ in $\mf(M)$ into
\[ \sum_{F_{r-2}\subsetneq G \subsetneq F \text{ flat } } f(V_G) = f(V_F)+ (\betrag{\{G: F_{r-2}\subsetneq G \subsetneq F \text{ flat }\}}-1)\cdot f(V_{F_{r-2}}).\]
Now the induction hypothesis for the flats $G,F_{r-2}$ implies that, as required, $f(V_F)=\sum_{i\in F} f(V_{\{i\}})$.
\end{proof}
\end{proposition}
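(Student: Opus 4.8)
The plan is to exploit that being a morphism is a \emph{local} property: it suffices to show that around each point $p$ the induced piecewise linear map on stars is linear. Since $X$ is smooth, $\Star_X(p)$ is isomorphic to a matroid variety modulo lineality space $\trop(M)/L$, and $\Star_Y(f(p))$ is a fan cycle. As any pseudo-morphism on the quotient $\trop(M)/L$ lifts to a pseudo-morphism on $\trop(M)$, the entire problem reduces to the following assertion: \emph{every piecewise linear pseudo-morphism $f:\trop(M)\to Y$ from a matroid variety to a fan cycle is linear.} Here each cone-piece of $f$ is linear through the origin, so no affine constants intervene.

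On $\trop(M)$ I would work throughout with the fine subdivision $\mf(M)$, whose cones $\gnrt{\F}$ are generated by the vectors $V_F=-\sum_{i\in F}e_i$ indexed by the flats $F$ of $M$. After deleting parallel elements (which changes $\trop(M)$ only up to isomorphism) I may assume $M$ is simple, so that every singleton $\{i\}$ is a flat; then the $V_{\{i\}}$ are defined and span the ambient space. The goal is to prove, by induction on $\rank(F)$, the identity
\[
f(V_F)=\sum_{i\in F} f(V_{\{i\}})\qquad\text{for every flat }F.
\]
Once this holds for all flats, linearity follows at once: there is a unique linear map $g$ with $g(V_{\{i\}})=f(V_{\{i\}})$, and the identity forces $g$ to agree with $f$ on every generator $V_F$ of every cone of $\mf(M)$; since $f$ is linear on each such cone, $f=g$ throughout, as required.

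The heart of the argument — and the only place the \emph{pseudo}-morphism hypothesis is used — is the inductive step. Given a flat $F$ of rank $r$, I would select a chain $\F=(\emptyset\subsetneq F_1\subsetneq\ldots\subsetneq F_{r-2}\subsetneq F\subsetneq\ldots\subsetneq E(M))$ with $\rank(F_i)=i$ that skips exactly the rank $r-1$ just below $F$. This chain indexes a codimension $1$ cone of $\mf(M)$, and the maximal cones containing it correspond precisely to the rank-$(r-1)$ flats $G$ with $F_{r-2}\subsetneq G\subsetneq F$. The key combinatorial input is that these $G$ partition $F\setminus F_{r-2}$, giving the linear relation
\[
\sum_{G:\,F_{r-2}\subsetneq G\subsetneq F} V_G = V_F+\bigl(\betrag{\{G:F_{r-2}\subsetneq G\subsetneq F\}}-1\bigr)\cdot V_{F_{r-2}},
\]
which is exactly the balancing condition around the chosen codimension $1$ cone (all weights being $1$). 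The pseudo-morphism property transports this relation to the images,
\[
\sum_{G} f(V_G) = f(V_F)+\bigl(N-1\bigr)\cdot f(V_{F_{r-2}}),\qquad N:=\betrag{\{G:F_{r-2}\subsetneq G\subsetneq F\}},
\]
and since $\rank(G)=r-1$ and $\rank(F_{r-2})=r-2$ are both smaller than $r$, the induction hypothesis applies to each $G$ and to $F_{r-2}$; substituting and applying the same partition identity once more yields $f(V_F)=\sum_{i\in F}f(V_{\{i\}})$. I expect the main obstacle to be the bookkeeping here, namely correctly identifying the codimension $1$ cones of $\mf(M)$ with rank-skipping chains, verifying the partition property of the intermediate flats, and confirming that the ambient relation above is precisely the balancing equation whose preservation the pseudo-morphism axiom guarantees.
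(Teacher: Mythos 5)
Your proposal is correct and follows essentially the same route as the paper: the same local reduction and lift from $\trop(M)/L$ to $\trop(M)$, the same simplification by deleting parallel elements, and the same induction on $\rank(F)$ using the rank-skipping chain, whose balancing relation $\sum_G V_G = V_F + (N-1)V_{F_{r-2}}$ is transported by the pseudo-morphism axiom. The only point you leave implicit (which the paper dispatches with ``it is easy to see'') is that a pseudo-morphism with respect to some polyhedral structure is automatically one with respect to $\mf(M)$, i.e.\ linear on each of its cones; this follows since balancing across a two-sided interior wall of equal weights forces linearity across it.
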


\begin{proposition}\label{fibremorphism_prop_morphism}
 For any family $T \stackrel{g}{\to} B$, the map $\varphi_g: B \to \mn$ is a pseudo-morphism.
\end{proposition}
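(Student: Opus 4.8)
The plan is to verify the two defining conditions of a pseudo-morphism for $\varphi_g$ using a convenient polyhedral structure on $B$ and the combinatorial description of $d_g$ via the distance coordinates $\dist_{k,l}$. First I would fix compatible polyhedral structures $\curlyt$ of $T$ and $\curlyb$ of $B$ such that $g(\tau)\in\curlyb$ for every $\tau\in\curlyt$ (cf.\ lemma \ref{flatlemma}), refined so that the combinatorial type of the fibre $g^*(b)$ is constant on the relative interior of each cell of $\curlyb$. On such a cell the fibres share a fixed combinatorial type, so each distance coordinate $\dist_{k,l}(g^*(b))$ is a sum of the lengths of the bounded edges on the path from leaf $k$ to leaf $l$. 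The crucial observation is that each such edge length depends affine linearly on $b$: locally the fibre $g^*(b)=g^*(\varphi\cdot B)=g^*\varphi\cdot T$ is cut out by pulling back the cocycle defining the point $b$, and as $b$ varies within one maximal cell of $\curlyb$ the vertices of the fibre move integer affine linearly with $b$. Hence $d_g$, and therefore $\varphi_g=q_n\circ d_g$, is integer affine linear on each maximal cell, which gives condition (1). The second prefamily axiom (surjectivity of $\lambda_{g\mid\tau}$ on lattices) is what guarantees integrality here, as flagged in remark \ref{families_remark_second}.

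For condition (2) I would work around a fixed codimension-one cell $\tau\in\curlyb^{(\dim B-1)}$ and use the star $\Star_B(\tau)$, reducing to the fan situation. The aim is to show
\[\sum_{\sigma>\tau}\omega_B(\sigma)\,\overline{\varphi_g}(u_{\sigma/\tau})=0\in Q_n/\,(Q_n)_{\varphi_g(\tau)}.\]
Choosing representatives $v_\sigma\in\sigma$ and a basis $p_0,\dots,p_d\in\tau$ of $V_\tau$ as in the reformulation remark, the point is to understand how the fibres over the $v_\sigma$ degenerate to the (more special) fibre over $\tau$. As $b$ crosses from a maximal cell $\sigma$ into $\tau$, certain bounded edges of $g^*(b)$ shrink to length zero, so $d_g$ records exactly these vanishing lengths; the linear parts $\overline{\varphi_g}(u_{\sigma/\tau})$ are therefore controlled by how the combinatorial type of the fibre specialises along $u_{\sigma/\tau}$. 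The balancing condition of $B$ at $\tau$, together with the balancing condition satisfied by each individual fibre curve $g^*(b)$ inside $\mn$, should then force the weighted sum of these linear parts to lie in $V_{\varphi_g(\tau)}$.

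I expect the verification of condition (2) to be the main obstacle, and specifically the bookkeeping that translates the balancing of $B$ into balancing of the image in $Q_n$. The key technical ingredient is the interplay between the balancing condition on $T$ (via axiom (2) of a prefamily, ensuring $\lambda_{g\mid\tau}$ is surjective on lattices) and the fact that each distance function $\dist_{k,l}$ is itself a tropical min/max-type expression in the edge lengths, which interacts well with balancing. Concretely, I would express each $\dist_{k,l}\circ\,d_g$ near $\tau$ as a linear combination of the coordinate functions describing how vertices of the universal fibre move, pull the balancing relation $\sum_{\sigma>\tau}\omega_B(\sigma)u_{\sigma/\tau}=0$ in $V/V_\tau$ through this linear dependence, and check that the correction terms coming from the $\alpha_i(p_i-p_0)$ in the reformulation remark cancel precisely because they correspond to directions in $V_\tau$ along which $d_g$ is affine linear. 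The delicate part is ensuring integrality and the correct behaviour at the higher-valent vertices where several edges of the fibre meet, which is exactly where the local surjectivity of $g$ (definition \ref{family}) and the smoothness of the fibres (so that each $\Star$ is locally $L^m_1$) are used to pin down the combinatorics.
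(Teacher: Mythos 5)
Your treatment of condition (1) follows the same route as the paper (stratify $B$ so that the combinatorial type of the fibre is constant on relative interiors, then observe that each bounded edge length is affine in $b$), but note that the integrality is not automatic from the cocycle description of the fibre: ``the vertices move integer affine linearly with $b$'' is exactly what has to be proved, and the paper's proof of proposition \ref{dgaffinelinear} needs a genuine lattice argument --- the splitting $\Lambda_\sigma \cong \Lambda_\tau \times \gnrt{v}_\Z$ furnished by axiom (2), followed by a primitivity argument showing that endpoints of fibres over lattice-equivalent base points differ by a vector of $\Lambda_\sigma$. You correctly flag axiom (2) as the source of integrality, but you give no argument in its place.

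The genuine gap is in condition (2), and it is not just missing bookkeeping: the cancellation mechanism you propose is the wrong one and would fail if pursued. The balancing defect of $d_g$ at a codimension-one cell $\tau$, namely $\delta_{k,l}(\tau) = \sum_{\sigma>\tau}(\dist_{k,l}(v_\sigma)-\dist_{k,l}(p_0)) - \sum_j \alpha_j(\dist_{k,l}(p_j)-\dist_{k,l}(p_0))$, is in general a \emph{nonzero} vector of $\R^{\binom{n}{2}}$; it does not vanish through cancellations along $V_\tau$-directions, nor does it land in $V_{\varphi_g(\tau)}$ there. What the paper proves is that $(\delta_{k,l}(\tau))_{k<l} = \phi_n((\chi_1(r))_{r})$, i.e.\ the defect lies in $\Im(\phi_n)$ and so dies only upon passing to the quotient $Q_n$, with residual contributions $\chi_1(k)+\chi_1(l)$ coming from shifts of the two vertices adjacent to the leaves $k$ and $l$. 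Reaching this requires machinery absent from your sketch: (i) for each codimension-one cell $\rho$ of $T$ carrying a bounded fibre edge, the bijection $\{\rho'>\rho\}\to\{\sigma>\tau\}$ of lemma \ref{fibremorphism_lemma_bijection} and the identification $u_{\rho'/\rho} = q_{i,l}^\sigma - q_i$ of normal vectors of $T$ with differences of fibre vertices (lemma \ref{fibremorphism_lemma_normalvectors}); the balancing of the \emph{total space} $T$ at such $\rho$ (with weights $1$ by lemma \ref{lambdasurjective}) is what produces the relations of corollary \ref{fibremorphism_lemma_xisexist} defining the scalars $\xi_i,\chi_i$ up to the kernel direction $w_i$ --- this, not balancing of $B$ or of the fibres ``inside $\mn$'' (a conflation on your part: the fibres live in the ambient space of $T$, not in $\mn$), is the engine of the proof; (ii) the local $L_1^m$-structure of the fibre at each vertex, used to extract the coefficient of $w_i$ and obtain $d_{k,l}^i = \chi_i - \xi_i$ for the newly created edges (lemma \ref{fibremorphism_lemma_diffidentity}) --- you mention smoothness at higher-valent vertices, but without this identity there is no telescoping $\delta_{k,l}(\tau) = \chi_1(k)+\chi_1(l)$; and (iii) the observation of remark \ref{fibremorphism_remark_xi} that $\chi_1(k,l)$ depends only on $k$, which is what makes the final vector of defects an element of $\Im(\phi_n)$. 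In short: your setup and stratification are right, but the decisive idea --- that $\varphi_g$ is balanced only because the defect of $d_g$ is $\phi_n$ of the leaf-shift vector, hence zero in $Q_n$ --- is missing, and the substitute mechanism you offer is incorrect.
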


Before we give a proof of this proposition we use it to prove our main theorem.

\begin{theorem}
\label{maintheorem}
 For any smooth variety $B$, we have a bijection
\begin{align*}
 \left\{\substack{\textnormal{Families } (T \stackrel{g}{\to} B,r_i^\theta) \\ \textnormal{of $n$-marked tropical curves } \\ \textnormal{modulo equivalence}}\right\} &
      \stackrel{1:1}{\longleftrightarrow}
  \left\{\substack{\textnormal{Morphisms} \\ \\ f: B \to \mn}\right\}\\
 (T \stackrel{g}{\to} B,r_i^\theta) &\mapsto \varphi_g \\
 (B^f \stackrel{\pi_B}{\to} B, (\id \times (s_i^\alpha \circ f))) & \mapsfrom f ,
\end{align*}
where $\varphi_g:B\rightarrow\mn$ is the morphism constructed in definition \ref{deffibremorph}, $B^f$ is the tropical subvariety of $B\times\mnp$ introduced in definition \ref{Xf}, $\pi_B: B^f\rightarrow B$ is the projection to $B$, and $s_i^\alpha, i=1,\ldots,n$ is the tropical marking of the forgetful map described in proposition \ref{markingmn}.
\begin{proof}
 We have already shown in corollary \ref{families_cor_pullback} and proposition \ref{fibremorphism_prop_morphism} that these maps are well-defined. It is obvious that they are inverse to each other.
\end{proof}
\end{theorem}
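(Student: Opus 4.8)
The plan is to check that the two assignments are mutually inverse; their individual well-definedness is already in place, via corollary \ref{families_cor_pullback} (the backward map produces a genuine family) and propositions \ref{fibremorphism_prop_morphism} and \ref{fibremorphism_prop_smoothpseudo} (for smooth $B$, the pseudo-morphism $\varphi_g$ is an honest morphism). I would first record that the forward map descends to equivalence classes: if $(T \stackrel{g}{\to} B)$ and $(T' \stackrel{g'}{\to} B)$ are equivalent, then $g^*(b)$ and $g'^*(b)$ are isomorphic as $n$-marked tropical curves for every $b$, so their pairwise leaf-distance vectors coincide, giving $d_g = d_{g'}$ and hence $\varphi_g = \varphi_{g'}$.

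For the composite $f \mapsto (B^f \stackrel{\pi_B}{\to} B) \mapsto \varphi_{\pi_B}$, I would show equality with $f$ on the nose. By proposition \ref{fibreXf}, applied with $X = B$ and $X' = \mnp$, the fibre of $\pi_B$ over $b$ is $\{b\} \times \ft^*(f(b))$; by proposition \ref{forgetful} this is the smooth rational $n$-marked curve with $n$ leaves, and by proposition \ref{markingmn} its leaves are labelled exactly as in the curve represented by the point $f(b) \in \mn$. Since $\mn$ is, by the construction of \cite{GKM}, the image of an $n$-marked curve under the map recording its vector of pairwise leaf-distances, the fibre morphism reads off precisely $\varphi_{\pi_B}(b) = q_n(d_{\pi_B}(b)) = f(b)$ for every $b$. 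As two morphisms into $\mn$ that agree as set maps are equal, this yields $\varphi_{\pi_B} = f$.

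For the reverse composite $(T \stackrel{g}{\to} B) \mapsto \varphi_g \mapsto (B^{\varphi_g} \stackrel{\pi_B}{\to} B)$, I would show the result is equivalent to the original family. Again by propositions \ref{fibreXf} and \ref{forgetful}, the fibre of the new family over $b$ is the $n$-marked curve represented by the point $\varphi_g(b) \in \mn$. But $\varphi_g(b)$ is, by the very definition of the fibre morphism, the moduli point of the fibre $g^*(b)$; since two $n$-marked curves represented by the same point of $\mn$ are isomorphic, the fibres $g^*(b)$ and $(\pi_B)^*(b)$ are isomorphic as $n$-marked tropical curves. As this holds for all $b$, the two families are equivalent, which is exactly what is demanded on the left-hand side (where families are taken modulo equivalence).

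The individual steps are short, and the only place that warrants care is the bookkeeping of the markings, namely that in both composites the induced labelling of the leaves of each fibre matches the labelling used to define the distance vector $d_g$. The marking axioms of definition \ref{family}, together with the explicit sections $s_i^\alpha$ of proposition \ref{markingmn}, guarantee this compatibility, so I do not expect a genuine obstacle; the real content of the theorem is the combination of the fibre computation (proposition \ref{fibreXf}) with the fact that $\mn$ is a fine moduli space in the set-theoretic sense (propositions \ref{forgetful} and \ref{markingmn}).
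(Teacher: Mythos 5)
Your proposal is correct and takes essentially the same route as the paper: well-definedness is delegated to corollary \ref{families_cor_pullback} and propositions \ref{fibremorphism_prop_morphism} and \ref{fibremorphism_prop_smoothpseudo}, and the mutual inverseness, which the paper dismisses as ``obvious'', amounts to exactly the fibre computations via propositions \ref{fibreXf}, \ref{forgetful} and \ref{markingmn} that you spell out. Your additional details (descent of $\varphi_g$ to equivalence classes and the bookkeeping of the markings) are accurate expansions of what the paper leaves implicit, not a different argument.
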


\begin{corollary}
 The tropical variety $\mn$ is a fine moduli space for the contravariant functor $F$ from the category of smooth tropical varieties into the category of sets, defined by
\begin{align*}
F: \textnormal{Obj}((SmoothTrop)) &\to \textnormal{Obj}((Sets))\\
 B &\mapsto \left\{\substack{\textnormal{Families } (T \stackrel{g}{\to} B,r_i^\theta) \\ \textnormal{of $n$-marked tropical curves } \\ \textnormal{modulo equivalence}}\right\}\\
\textnormal{Mor}((SmoothTrop)) &\to \textnormal{Mor}((Sets))\\
(B \stackrel{f}{\to} B') &\mapsto f^*,
\end{align*}
where 
\begin{align*}
f^*: \{T' \to B'\} &\to \{T \to B\} \\
      (T' \to B') &\mapsto (B^{\varphi_{g'} \circ f} \to B)
\end{align*}
is the pull-back of families induced by composing $f$ with the fibre morphism and constructing the corresponding family.
\end{corollary}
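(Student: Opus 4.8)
The plan is to read the corollary as a functorial reformulation of Theorem \ref{maintheorem}, so that all of the substantive work is already done and only bookkeeping remains. The one identity I would isolate and reuse throughout is the following immediate consequence of Theorem \ref{maintheorem}: for any morphism $h:B\to\mn$ the fibre morphism of the induced pullback family $B^h\stackrel{\pi_B}{\to}B$ is again $h$, i.e.\ $\varphi_{\pi_B}=h$. This holds because $h\mapsto(B^h\to B)\mapsto\varphi_{\pi_B}$ is the composite of the two mutually inverse bijections of the main theorem.

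First I would verify that $F$ is a well-defined contravariant functor. That $F(f)=f^*$ is well-defined on equivalence classes is immediate, since the fibre morphism $\varphi_{g'}$ depends only on the isomorphism types of the fibres (it records their pairwise leaf distances) and is therefore constant on each equivalence class. Functoriality then follows by substitution. For identities, $F(\id_B)$ sends $(T\stackrel{g}{\to}B)$ to $(B^{\varphi_g}\to B)$, which is equivalent to $(T\stackrel{g}{\to}B)$ precisely because the two maps of the main theorem are inverse. For composition, given $f:B\to B'$, $h:B'\to B''$ and a family $(T''\stackrel{g''}{\to}B'')$, applying $F(h)$ produces the family ${B'}^{\varphi_{g''}\circ h}\to B'$ whose fibre morphism is $\varphi_{g''}\circ h$ by the isolated identity; applying $F(f)$ to it then gives the family $B^{\varphi_{g''}\circ(h\circ f)}\to B$ (using associativity $(\varphi_{g''}\circ h)\circ f=\varphi_{g''}\circ(h\circ f)$), which is exactly $F(h\circ f)$ evaluated at the same family. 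Hence $F(h\circ f)=F(f)\circ F(h)$.

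Next I would exhibit the natural transformation $\eta_B:F(B)\to\mathrm{Hom}(B,\mn)$ sending a family class to its fibre morphism $\varphi_g$; by the previous paragraph this is well-defined, and by Theorem \ref{maintheorem} each $\eta_B$ is a bijection. It remains to check naturality, i.e.\ that for every $f:B\to B'$ the square
\[
\begin{CD}
F(B') @>\eta_{B'}>> \mathrm{Hom}(B',\mn)\\
@VF(f)VV @VV(-\circ f)V\\
F(B) @>\eta_B>> \mathrm{Hom}(B,\mn)
\end{CD}
\]
commutes. Starting from a class $[T'\stackrel{g'}{\to}B']$, the upper-right composite returns $\varphi_{g'}\circ f$, while the lower-left composite returns $\eta_B(B^{\varphi_{g'}\circ f}\to B)=\varphi_{\pi_B}$, which equals $\varphi_{g'}\circ f$ by the isolated identity applied to $h=\varphi_{g'}\circ f$. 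Thus the square commutes, $\eta$ is a natural isomorphism, and $\mn$ represents $F$.

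I do not expect a genuine obstacle here: there is no estimate, construction, or limiting argument to carry out, since all of the geometry has been packaged into Theorem \ref{maintheorem}. The only real care needed is bookkeeping---keeping the contravariance of both functors straight and confirming that the single identity $\varphi_{\pi_B}=h$ simultaneously supplies functoriality of $F$ and naturality of $\eta$, which together are exactly what the definition of a fine moduli space demands.
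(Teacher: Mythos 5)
Your proposal is correct and takes the same route the paper implicitly does: the paper states this corollary without a separate proof, treating it as an immediate repackaging of Theorem \ref{maintheorem}, and your verification (well-definedness on equivalence classes, functoriality, and naturality, all reduced to the single identity $\varphi_{\pi_B}=h$ coming from the mutually inverse bijections of the main theorem) is exactly the bookkeeping that justifies this. Nothing is missing; in particular your observation that $\varphi_g$ depends only on the fibres' marked isomorphism types correctly handles the passage to equivalence classes.
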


The rest of this section is dedicated to proving proposition \ref{fibremorphism_prop_morphism}. For all the following proofs, we will assume that $\curlyt$ and $\curlyb$ are polyhedral structures of $T$ and $B$ satisfying $\curlyb = \{g(\sigma), \sigma \in \curlyt\}$. This is possible by \cite[lemma 1.3.4]{disshannes}.

\begin{lemma}\label{fibremorphism_lemma_homeo}
 Fibres over the relative interior of a cell $\tau$ in $\curlyb$ have the same combinatorial type. More precisely: For each $\tau \in \curlyb, b \in \tau, b' \in \relint \tau$, there exists a piecewise linear, continuous and surjective map
$$t_{b',b}: g^*(b') \to g^*(b)$$
for which the following holds:
\begin{enumerate}
 \item If $b,b' \in \relint(\tau), t_{b',b}$ is a homeomorphism
 \item If $l_i(b), l_i(b')$ denote the $i$-th leafs of the respective fibre, then $$t_{b',b}(l_i(b')) = l_i(b).$$
 \item On each edge $e$ of $g^*(b')$, $t_{b',b}$ is affine linear and $e$ is either mapped bijectively onto an edge with the same primitive direction vector or to a single vertex. In particular, vertices are mapped to vertices.
 \item If $e_1,e_2$ are two different edges of $g^*(b')$, then 
$$\betrag{t_{b',b}(e_1) \cap t_{b',b}(e_2)} \leq 1.$$
 \item For each $\sigma \in \curlyt$ such that $g(\sigma) = \tau$, we have $$t_{b',b}(\betrag{ g^*(b')} \cap \sigma) \subseteq \sigma.$$
\end{enumerate}
\begin{proof}
 Let $\sigma \in \curlyt$ such that $g(\sigma) = \tau$ and denote by $C_b := \betrag{g^*(b)} \cap \sigma, C_{b'} := \betrag{g^*(b')} \cap \sigma$. Similarly, if $b_\lambda := b + \lambda(b'-b), \lambda \in [0,1]$, we denote by $C_{b_\lambda}$ its fibre in $\sigma$. 

If $\dim \sigma = \dim \tau$, then $f_{\mid \sigma}$ is injective and $C_b, C_{b'}$ are single points.

If $\dim \sigma = \dim \tau +1$, then $C_{b'}$ must be a line segment and $C_b$ is either a parallel line segment or a point. Furthermore, $C_b$ is unbounded, if and only if $C_{b'}$ is unbounded (in the sense that it intersects $\partial \sigma$ in only one point). Indeed, assume $C_{b'}$ to be unbounded. Then $C_{b'} = \{ x + \alpha \cdot v; \alpha \geq 0\}$ for some $x$ and $v$ in $\R^n$. Now let $p,q$ be distinct points in $C_b$. Then 
\begin{align*}
 \sigma &\ni (1-\lambda)p + \lambda(q+ \alpha v) \;\textnormal{ for all } \lambda \in [0,1], \alpha \geq 0\\
	&= ((1-\lambda)p + \lambda q) + \alpha \lambda v \in C_{b_\lambda}
\end{align*}
due to convexity of $\sigma$. Hence $C_{b_\lambda}$ is unbounded for all $\lambda > 0$ and since $g$ is a continuous map, $C_b$ must be unbounded as well. The other implication can be proven analogously.

This gives us a canonical affine linear map $t_{b',b}^\sigma: C_{b'} \to C_b$ on each cell $\sigma$ such that $g(\sigma) = \tau$. We can obviously glue these together to a piecewise affine linear map $t_{b',b}: g^*(b') \to g^*(b)$ (we will shorten this to $t$ here for simplicity).

If $b \in \relint (\tau)$ as well, we see that $g_{\mid \sigma}^{-1}(\relint (\tau)) \subseteq \relint (\sigma)$ for any $\sigma$ on which $g$ is not injective, so $C_b$, $C_{b'}$ are both line segments and $t$ becomes a homeomorphism. Obviously $t$ is affine linear on each edge of $g^*(b')$. Hence, if $t_{\mid e}$ is not injective for some edge $e$, it must be constant. Since $t_{\mid \sigma}$ preserves edge directions, so does $t_{\mid e}$. Furthermore, if any vertex $v$ were to be mapped onto the interior of an edge $e'$ of $g^*(b)$, then all edges adjacent to $v$ would have to be mapped to $e$ as well. But two different edges at $v$ have linearly independent direction vectors, so they must live in different cells of $\curlyt$. Hence their images can only intersect in at most one point.

Finally we see that a leaf can obviously only be mapped to a leaf with the same direction vector. Affine linearity of $s_i^\theta$ implies that they must be marked by the same map $s_i$.
\end{proof}
\end{lemma}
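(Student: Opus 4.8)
The plan is to build the map $t_{b',b}$ cell by cell and then glue. The key enabling fact is that $g$ is locally surjective, so by Lemma \ref{settheoreticfibre} the support $|g^*(b)|$ coincides with the set-theoretic fibre $g^{-1}\{b\}$; this lets me argue purely set-theoretically about the pieces $C_b := g^{-1}\{b\}\cap\sigma$ for each $\sigma\in\curlyt$ with $g(\sigma)=\tau$. First I would classify these pieces by dimension. Condition (2) of a prefamily forces $\lambda_{g|\sigma}$ to surject onto $V_\tau$, so for $b'\in\relint\tau$ the affine slice $C_{b'}=\sigma\cap g|_\sigma^{-1}(b')$ has dimension $\dim\sigma-\dim\tau$; since the fibre is a one-dimensional curve this is $0$ or $1$. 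Thus only two cases occur: if $\dim\sigma=\dim\tau$ then $g|_\sigma$ is injective and $C_b,C_{b'}$ are single points, while if $\dim\sigma=\dim\tau+1$ the kernel of $\lambda_{g|\sigma}$ is a line $\langle v\rangle$ and $C_{b'}$ is a segment or ray in direction $v$.

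Second, I would prove the ``same combinatorial type'' assertion by locating the vertices. If $w$ is an endpoint of $C_b$, it lies on a proper face $\rho<\sigma$; the cell $g(\rho)\in\curlyb$ then contains the point $b\in\relint\tau$, hence contains $\tau$, and a dimension count ($\dim\tau\le\dim g(\rho)\le\dim\rho\le\dim\sigma-1=\dim\tau$) forces $g(\rho)=\tau$, $\dim\rho=\dim\tau$, and $g|_\rho$ injective. So edges of $g^*(b)$ correspond exactly to the $(\dim\tau+1)$-cells over $\tau$ and vertices to the injective $\dim\tau$-cells over $\tau$, and this incidence data does not depend on the choice of $b'\in\relint\tau$. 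The bounded-versus-unbounded dichotomy for $C_{b'}$ and $C_b$ I would settle by the convexity argument already sketched in the setup: if $C_{b'}$ contains a ray $\{x+\alpha v:\alpha\ge 0\}$, then convexity of $\sigma$ propagates unboundedness through the intermediate fibres $C_{b_\lambda}$ over $b_\lambda=b+\lambda(b'-b)$, forcing $C_b$ to be unbounded as well, and symmetrically.

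Third, on each $\sigma$ I would define the canonical affine map $t^\sigma_{b',b}\colon C_{b'}\to C_b$ by matching endpoints: on a point-cell it is the evident map of points; on an edge-cell it sends each endpoint of $C_{b'}$ lying on a face $\rho$ to the corresponding endpoint of $C_b$ on the same $\rho$ (for a ray, translating along $v$, so that directions are preserved). Because these maps are determined by the shared faces, two cells $\sigma_1,\sigma_2$ meeting in a common face yield maps agreeing on $C_{b'}\cap\sigma_1\cap\sigma_2$, so the $t^\sigma$ glue to a continuous, piecewise affine, surjective $t_{b',b}$. Properties (3) and (5) are then immediate (each edge stays in its cell, and an edge collapses to a vertex precisely when $C_b$ degenerates to a point for $b\in\partial\tau$); property (1) follows since for $b\in\relint\tau$ no $C_b$ degenerates, so every $t^\sigma$ is an affine bijection; property (4) holds because distinct edges lie in distinct $(\dim\tau+1)$-cells whose intersection is a lower-dimensional face, meeting each image segment in at most one point; and property (2) follows because a leaf can only map to a leaf with the same direction vector, while the affine-linear markings $s_i$ pin down which leaf carries index $i$.

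The main obstacle I expect is the bookkeeping in the second and third steps: establishing rigorously that the endpoints of $C_b$ sit on injective $\dim\tau$-cells (so that the vertex set, and hence the combinatorial type, is genuinely constant over $\relint\tau$), and then verifying that the endpoint-matching maps are consistent across shared faces and degenerate uniformly as $b$ approaches $\partial\tau$. The short convexity argument for the bounded/unbounded dichotomy is the geometric heart that guarantees $t_{b',b}$ is everywhere defined and surjective, rather than only partially defined.
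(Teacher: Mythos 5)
Your proposal is correct and takes essentially the same route as the paper's own proof: the same cell-by-cell slices $C_b = \betrag{g^*(b)}\cap\sigma$ with the dimension dichotomy $\dim\sigma - \dim\tau \in \{0,1\}$, the same convexity argument for the bounded/unbounded question, the same canonical affine maps per cell glued along common faces, and the same arguments for properties (1)--(5) including direction preservation and the affine linearity of the markings $s_i$. Your extra details --- the dimension count showing that endpoints of $C_{b'}$ lie on faces $\rho$ with $g(\rho) = \tau$ and $g_{\mid\rho}$ injective, and the explicit endpoint-matching that makes the gluing consistent --- merely spell out steps the paper treats as obvious, so there is no substantive difference in method.
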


\begin{proposition}\label{dgaffinelinear}
  The map $d_g$ of definition \ref{deffibremorph} is integer affine linear on each $\tau\in\curlyb$.
\begin{proof}
We first show that $d_g$ is affine linear on each cell: Since $\tau\in\curlyb$ is closed and convex, it suffices to show that $d_g$ is affine linear on any line segment $\conv\{b,b'\} \subseteq \tau$, where $b \in \tau$ and $b' \in \relint (\tau)$.

 Let $\sigma \in \curlyt$ such that $g(\sigma) = \tau$ and $\dim \sigma = \dim \tau +1$ and both fibres $C_b, C_{b'}$ are bounded (see proof of \ref{fibremorphism_lemma_homeo}, obviously only these fibres are relevant for the distance map $d_g$). Then the map $d_g^\sigma: \conv\{b,b'\} \to \R$, which assigns to $b_\lambda := b + \lambda(b'-b) \in \conv\{b,b\}$ the length of its fibre in $\sigma$, is affine linear since $g_{\mid \sigma}^{-1}(\conv\{b,b'\})$ is a polyhedron.

Denote by $G_{b_\lambda}(k,l)$ the set of all cells in $\curlyt$ of dimension $(\dim \tau + 1)$ such that $g_{\mid \sigma}^{-1}(b_\lambda)$ is contained in the path from $k$ to $l$ in the curve $g^*(b_\lambda)$. Then we have
$$\dist_{k,l}(g^*(b_\lambda)) = \sum_{\sigma \in G_{b_\lambda}(k,l)} d_g^\sigma(b_\lambda).$$
Since we know that $d_g^\sigma$ is affine linear, it suffices to show that $G_{b_\lambda}(k,l) = G_{b_\rho}(k,l)$ for all $\lambda,\rho \in [0,1]$, which immediately follows from the fact that the map $t_{\lambda,\rho}$ identifies equally marked leaves and hence edges lying on the same path.

It remains to show that $d_g$ is an integer map: We want to show that for $b,b' \in \tau$ such that $b-b' \in \Lambda_\tau$, we have $d_g(b') - d_g(b) \in \Z^{\binom{n}{2}}$. Choose $\sigma$ such that the fibre of $b'$ in $\sigma$ is a bounded line segment. It is easy to see that we must have two endpoints $p,q$ of both fibres lying in the same face $\sigma' < \sigma$, and hence in the same hypersurface of $V_\sigma$ which is defined by an integral equation
$$h(x) = \alpha;\; h \in \Lambda_\sigma^\vee, \alpha \in \R.$$
By surjectivity of $\bar{\lambda}_{g \mid \sigma}: \Lambda_\sigma \to \Lambda_\tau$, we have
$$\Lambda_\sigma \cong \Lambda_\tau \times \gnrt{v}_\Z$$
for some primitive integral vector $v$ (which generates $\ker \lambda_{g\mid\sigma}$).

Under this isomorphism we write the coordinates of $p, q$ and $h$ as 
\begin{align*}
 p &= (p_1,\dots ,p_k,p_v) \\
 q &= (q_1,\dots ,q_k,q_v) \\
 h(x_1,\dots ,x_k,x_v) &= h_1x_1 + \dots + h_kx_k + h_v x_v,
\end{align*}
where $p_i-q_i \in \Z$ for $i = 1,\dots, k$, $h_j \in \Z$ for all $j$ and $h_v \neq 0$ (since otherwise $\lambda_g$ would not be injective on the corresponding hypersurface). Now the identity $h(p-q) = 0$ transforms into
\begin{align*}
 0 &= \sum_{i=1}^k (q_i - p_i) h_i + (q_v - p_v) h_v\\
   &= \underbrace{\sum_{i=1}^k (b'-b)_{i} h_i}_{\in \Z} + (q_v - p_v) \underbrace{h_v}_{\in \Z}.
\end{align*}
Hence $q_v - p_v \in \Q$ and $q-p \in \Lambda_\sigma \otimes_\Z \Q$.

So there exists a minimal $k \in \N$ such that $k \cdot (q-p) \in \Lambda_\sigma$. In particular, $k \cdot (q-p)$ is primitive. Assume $k > 1$. Then $\bar{\lambda}_g(k \cdot (q-p)) = k \cdot (b' - b)$. By surjectivity of $\bar{\lambda}_g$, there exists an $a \in \Lambda_{\sigma'}$ such that $\bar{\lambda}_g(a) = b'-b$. (Note that we cannot use lemma \ref{lambdasurjective} here since $\lambda_{\mid \sigma'}$ is injective.) This implies $\bar{\lambda}_g(k \cdot a) = \bar{\lambda}_g(k \cdot (q-p))$. Since $\bar{\lambda}_g$ is injective on $\Lambda_{\sigma'}$, we must have $k \cdot a = k \cdot (q-p)$, which is a contradiction since the latter is primitive. Hence $k = 1$ and $q-p \in \Lambda_\sigma$.

Finally we obtain 
$$\Lambda_\sigma \ni (q'-p') - (q-p) = (d_g^\sigma(b') - d_g^\sigma(b)) \cdot v.$$
Hence, since $v$ is primitive, $d_g^\sigma(b') - d_g^\sigma(b) \in \Z$ and the same follows for $d_g(b') - d_g(b)$.
\end{proof}
\end{proposition}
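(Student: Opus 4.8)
The plan is to separate the claim into affine linearity and integrality of the slopes, reducing both to the contribution of a single cell $\sigma$ of $\curlyt$ lying one dimension above $\tau$. First I would fix a segment $\conv\{b,b'\}$ with $b\in\tau$ and $b'\in\relint(\tau)$; by convexity of $\tau$ it suffices to treat such segments. Lemma \ref{fibremorphism_lemma_homeo} guarantees that all fibres over the segment share one combinatorial type and that the maps $t_{b',b}$ identify equally marked leaves, so the collection of edges lying on the path from leaf $k$ to leaf $l$ is the same for every fibre along the segment. Each such bounded edge is the fibre $g_{\mid\sigma}^{-1}(b)$ of a uniquely determined cell $\sigma\in\curlyt$ with $g(\sigma)=\tau$ and $\dim\sigma=\dim\tau+1$; writing $\ell_\sigma(b)$ for its length, I obtain
$$\dist_{k,l}(g^*(b)) = \sum_{\sigma}\ell_\sigma(b),$$
a sum over a fixed (path-independent of $b$) set of cells. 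This reduces everything to the single-cell length functions $\ell_\sigma$.

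For affine linearity I would observe that $g_{\mid\sigma}^{-1}(\conv\{b,b'\})$ is a polyhedron in $\sigma$, fibred over the segment by the affine map $g$ with one-dimensional fibres in the direction $v$ spanning $\ker\lambda_{g\mid\sigma}$. Because the combinatorial type is constant, the two endpoints of the fibre remain on two fixed proper faces $\sigma'<\sigma$; each endpoint is then cut out by $g(x)=b$ together with the fixed defining equations of its face and hence depends affine linearly on $b$, so their $v$-separation $\ell_\sigma(b_\lambda)$, where $b_\lambda=b+\lambda(b'-b)$, is affine linear in $\lambda$. Summing over the fixed path-cells gives affine linearity of $\dist_{k,l}$ on the segment, and, as this holds for all $b'\in\relint(\tau)$, on $\tau$ by continuity.

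The genuinely delicate point is integrality: I must show $\ell_\sigma(b)-\ell_\sigma(b')\in\Z$ whenever $b-b'\in\Lambda_\tau$. Here the second prefamily axiom is essential, as it makes the induced lattice map $\lambda_{g\mid\sigma}\colon\Lambda_\sigma\to\Lambda_\tau$ surjective, and since $\dim\sigma=\dim\tau+1$ its kernel is generated by a single primitive vector $v$. The plan is to track the two endpoints of the fibre as $b$ moves to $b'$: each endpoint lies throughout on a fixed facet $\sigma'$ given by an integral equation $h(x)=\alpha$ with $h\in\Lambda_\sigma^\vee$, and $h$ has nonzero $v$-component (otherwise $v\in V_{\sigma'}$ and $\lambda_{g\mid\sigma'}$ would fail to be injective). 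The displacement $w$ of such an endpoint satisfies $h(w)=0$, i.e.\ $w\in V_{\sigma'}$, and $\lambda_{g\mid\sigma}(w)=b'-b\in\Lambda_\tau$. Expanding in coordinates adapted to the splitting $\Lambda_\sigma\cong\Lambda_\tau\times\gnrt{v}_\Z$ shows a priori only that $w\in\Lambda_\sigma\otimes_\Z\Q$; to upgrade this to $w\in\Lambda_\sigma$ I would run a primitivity argument, using that $\lambda_{g\mid\sigma'}$ is injective on $\Lambda_{\sigma'}$ (since $v\notin V_{\sigma'}$) and that $b'-b$ already has a lattice preimage in $\Lambda_{\sigma'}$ by surjectivity, so any smaller denominator would contradict primitivity. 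Thus both endpoint displacements lie in $\Lambda_\sigma$, whence the difference of the two fibre vectors, being the difference of these displacements, lies in $\Lambda_\sigma$; as this difference equals $(\ell_\sigma(b')-\ell_\sigma(b))\cdot v$, primitivity of $v$ forces $\ell_\sigma(b')-\ell_\sigma(b)\in\Z$. I expect this lattice-theoretic integrality step, resting crucially on prefamily axiom (2), to be the main obstacle.
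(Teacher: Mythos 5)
Your proposal is correct and follows essentially the same route as the paper's proof: the same reduction to segments and to the per-cell length functions $d_g^\sigma$ with the path-cell set held constant via lemma \ref{fibremorphism_lemma_homeo}, affine linearity from the polyhedrality of $g_{\mid\sigma}^{-1}(\conv\{b,b'\})$, and for integrality the identical splitting $\Lambda_\sigma \cong \Lambda_\tau \times \gnrt{v}_\Z$ from prefamily axiom (2), the facet equation $h(x)=\alpha$ with $h_v \neq 0$, and the minimal-denominator primitivity argument using injectivity of $\lambda_{g\mid\sigma'}$ together with a lattice preimage of $b'-b$ in $\Lambda_{\sigma'}$. You correctly identified the lattice-theoretic step resting on axiom (2) as the crux, exactly as in the paper.
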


Before we can prove that $\varphi_g$ is a pseudo-morphism, we need to fix a few notations:

\begin{notation}\label{fibremorphism_notation}
\indent\par
\begin{itemize}
 \item Let $\tau\in\curlyb^{(\dim \curlyb-1)}$. Choose $p_0,p_1,\dots,p_d \in \relint (\tau)$ such that $\{p_i - p_0; i = 1,\dots ,d\}$ is a basis of $V_\tau$. Furthermore, for each $\sigma > \tau$, choose a point $v_\sigma \in \relint (\sigma)$ such that $v_\sigma - p_0$ is a representative of $u_{\sigma / \tau}$. We can assume that this is possible since there always exist  $v_\sigma \in \relint (\sigma), q_\sigma \in \Q$ such that $v_\sigma - p_0 = q_\sigma \cdot u_{\sigma / \tau}$ modulo $V_\tau$. We can then make our choice such that $q_\sigma = q_{\sigma'} =: q$ for all $\sigma, \sigma' > \tau$, so 
  $$\sum_{\sigma > \tau} u_{\sigma/\tau} = \frac{1}{q}\sum_{\sigma > \tau} (v_\sigma - p_0).$$
Hence the left hand side is in $V_\tau$ if and only if the right hand side is.

So we obtain that $$\sum_{\sigma > \tau} (v_\sigma - p_0) = \sum_{j=1}^d \alpha_j (p_j - p_0)$$
for some $\alpha_j \in \R$.
\item Lemma \ref{fibremorphism_lemma_homeo} justifies the following definitions: We fix $k,l \in [n]$.
\begin{itemize}
\item Denote by $q_1,\dots ,q_r \in T$ the vertices of the fibre $g^*(p_0)$ which lie on the path from $k$ to $l$.
\item The fibre of $p_j$ has the same combinatorial type as $g^*(p_0)$, so for $j = 1,\dots, d$, denote by $q_i^{(j)}, i = 1,\dots ,r$ the $i$-th vertex in the fibre of $p_j$.
\item Let $\sigma > \tau$. The preimage of $q_i$ under $t_{v_\sigma,p_0}$ contains a certain number of vertices lying on the path from $k$ to $l$, the first and last of which we denote by $q_{i,k}^\sigma$ and $q_{i,l}^\sigma$ respectively.
\item Let $w_i, i = 1,\dots ,r-1$ be the primitive direction vector of the bounded edge from $q_i$ to $q_{i+1}$. We define the lengths $e_i, e_i^{(j)}, e_i^\sigma > 0$ of the corresponding edges via:
  \begin{align*}
   q_{i+1} &= q_i + e_i \cdot w_i,\\
   q_{i+1}^{(j)} &= q_i^{(j)} + e_i^{(j)} \cdot w_i,\\
   q_{i+1,k}^\sigma &= q_{i,l}^\sigma + e_i^\sigma \cdot w_i.
  \end{align*}
\item In addition we fix $w_0 := -v_k, w_r := v_l$, where $v_k$ and $v_l$ are the primitive direction vectors of the leaves marked $k$ and $l$.
\item For $i = 1,\dots, r$, denote by $e_{i,t}^\sigma, t=1,\dots ,r_{i,\sigma}$ the length of the edges  on the path from $q_{i,k}^\sigma$ to $q_{i,l}^\sigma$.
\end{itemize}
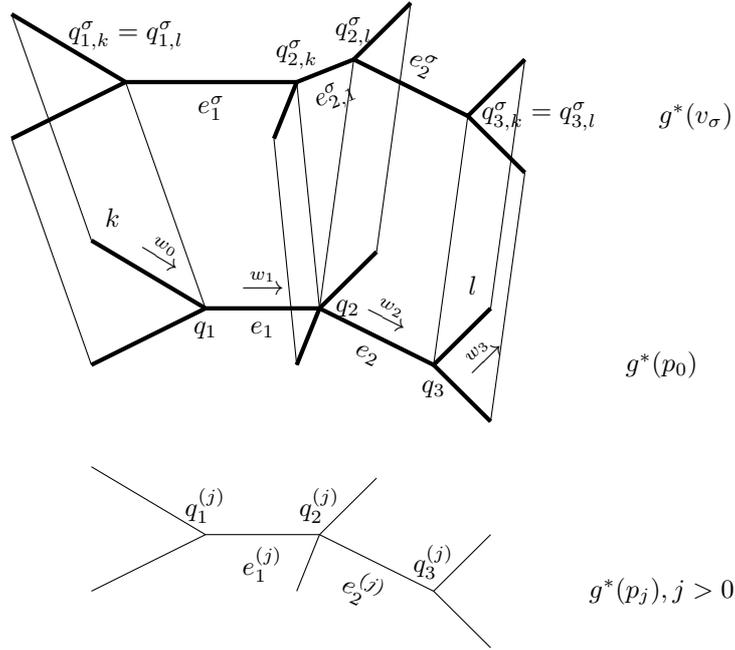
\begin{figure}[h]
 \centering
 \begin{tikzpicture}[scale = 1.5]
  \node (a0) at (0,0.6) {};
  \node (a1) at (0,-0.5) {};
  \node (q1) at (1,0) {};
  \node (q2) at (2,0) {};
  \node (a2) at (1.8,-0.5) {};
  \node (a3) at (2.5,0.5) {};
  \node (q3) at (3,-0.5) {};
  \node (a4) at (3.5,-1) {};
  \node (a5) at (3.5,0) {};

  \node (diff) at (0,-2) {};
  \node (disig) at (-0.2,2) {};
  \node (delta) at (0.5,0.2) {};
  \node (eps) at (-0.5,0) {};

  \node (tdiff) at (2,0) {};

  \node (sigmafibre) at ($(q3) + (disig) + (delta) + (tdiff)$) {$g^*(v_\sigma)$};
  \node (taufibre) at ($(q3) + (tdiff)$) {$g^*(p_0)$};
  \node (pifibre) at ($(q3) + (diff) + (tdiff)$) {$g^*(p_j), j > 0$};

 \begin{scope}[ultra thick]
  \draw (a0.center) node[above right = 1pt]{$k$} -- 
	(q1.center) 
	    node[below = 1pt]{$q_1$} 
	    node[midway, sloped, above = 1pt]{$\stackrel{w_0}{\longrightarrow}$}-- 
	(q2.center) 
	    node[midway, sloped, above = 1pt]{$\stackrel{w_1}{\longrightarrow}$}
	    node[midway, sloped, below = 1pt]{$e_1$} -- 
	(q3.center) node[midway, sloped,above = 1pt]{$\stackrel{w_2}{\longrightarrow}$} 
	    node[midway, sloped, below = 1pt]{$e_2$} -- 
	(a5.center) 
	    node[above left = 1pt]{$l$}
	    node[midway, sloped, below = 1pt]{$\stackrel{w_3}{\longrightarrow}$};
  \draw (a1.center) -- (q1.center);
  \draw (q2.center) node[right = 2pt]{$q_2$} -- (a2.center);
  \draw (q2.center) -- (a3.center);
  \draw (q3.center) node[below = 2pt]{$q_3$} -- (a4.center);

  \draw ($(a0) + (disig) + (eps)$.center)  -- 
	($(q1) + (disig) + (eps)$.center) 
	    node[above = 8pt]{$q_{1,k}^\sigma = q_{1,l}^\sigma$}-- 
	($(q2) + (disig)$.center) 
	    node[above = 1pt]{$q_{2,k}^\sigma$} 
	    node[midway,sloped,below = 1pt]{$e_1^\sigma$}-- 
	($(q2)+(disig) + (delta)$) 
	    node[above = 1pt]{$q_{2,l}^\sigma$}
	    node[midway, sloped, below = 1pt]{$e_{2,1}^\sigma$}-- 
	($(q3) + (disig) + (delta)$.center) 
	    node[right = 1pt]{$q_{3,k}^\sigma = q_{3,l}^\sigma$}
	    node[midway, sloped, above= 1pt]{$e_2^\sigma$}-- 
	($(a5)+(disig)+ (delta)$.center);
  \draw ($(a1) + (disig) + (eps)$.center) -- ($(q1) + (disig) + (eps)$.center);
  \draw ($(q2) + (disig)$.center) -- ($(a2) + (disig)$.center);
  \draw ($(q2) + (disig) + (delta)$.center) -- ($(a3) + (disig) + (delta)$.center);
  \draw ($(q3) + (disig) + (delta)$.center) -- ($(a4) + (disig) + (delta)$.center);
 \end{scope}
  \draw ($(a0) + (diff)$.center)  -- 
	($(q1) + (diff)$.center) 
	    node[above = 1pt]{$q_1^{(j)}$} -- 
	($(q2) + (diff)$.center) 
	    node[above = 1pt]{$q_2^{(j)}$} 
	    node[midway, sloped,below= 1pt]{$e_1^{(j)}$}-- 
	($(q3) + (diff)$.center) 
	    node[above = 1pt]{$q_3^{(j)}$} 
	    node[midway,sloped,below = 1pt]{$e_2^{(j)}$}-- 
	($(a5)+(diff)$.center);
  \draw ($(a1) + (diff)$.center) -- ($(q1) + (diff)$.center);
  \draw ($(q2) + (diff)$.center) -- ($(a2) + (diff)$.center);
  \draw ($(q2) + (diff)$.center) -- ($(a3) + (diff)$.center);
  \draw ($(q3) + (diff)$.center) -- ($(a4) + (diff)$.center);

  \draw (q1.center) -- ($(q1) + (disig) + (eps)$.center);
  \draw (q2.center) -- ($(q2) + (disig)$.center);
  \draw (q2.center) -- ($(q2) + (disig) + (delta)$.center);
  \draw (q3.center) -- ($(q3) + (disig) + (delta)$.center);
  \draw (a0.center) -- ($(a0) + (disig) + (eps)$.center);
  \draw (a1.center) -- ($(a1) + (disig) + (eps)$.center);
  \draw (a2.center) -- ($(a2) + (disig)$.center);
  \draw (a3.center) -- ($(a3) + (disig) + (delta)$.center);
  \draw (a4.center) -- ($(a4) + (disig) + (delta)$.center);
  \draw (a5.center) -- ($(a5) + (disig) + (delta)$.center);

 \end{tikzpicture}
 \caption{An illustration of the chosen notation}
\end{figure}

\item We define 
 \begin{align*}
  \Delta_{k,l}^i &:= \sum_{\sigma > \tau} (e_i^\sigma - e_i) - \sum_{j=1}^d \alpha_j (e_i^{(j)} - e_i);\; i = 1,\dots, r-1,\\
  d_{k,l}^i &:= \sum_{\sigma > \tau} \left(\sum_{t = 1}^{r_{i,\sigma}} e_{i,t}^\sigma\right);\; i=1,\dots, r.
 \end{align*}
Summing up over all length differences at each vertex and edge and exchanging sums gives us the following equation:
\begin{align}
 \delta_{k,l}(\tau) &:= \sum_{\sigma > \tau} (\dist_{k,l}(v_\sigma) - \dist_{k,l}(p_0)) - \sum_{j=1}^d \alpha_j (\dist_{k,l}(p_j) - \dist_{k,l}(p_0))  \notag\\ 
 & = \sum_{i=1}^{r-1} (d_{k,l}^i + \Delta_{k,l}^i) + d_{k,l}^r .\label{eq:fibremorphism_eq_diff}
\end{align}
\end{itemize}
\end{notation}

\begin{remark}
 To prove that $\varphi_g$ is a pseudo-morphism, we need to show that $(\delta_{k,l})_{k<l} \in \Im(\phi_n)$, i.e.\ it is $0$ in $\mn$. The idea for the proof is the following: A cell $\rho$ that maps non-injectively onto some $\tau \in \curlyb$ (and thus carries edges of the fibres of the $p_i$) is a codimension one cell in $T$.  We will show that the vertices of the fibres in the surrounding maximal cells can be used to express the balancing condition of $\rho$ (lemma \ref{fibremorphism_lemma_normalvectors}). However, $\dim \rho = \dim \tau +1$, so we have an additional generator $w_i$ of $V_{\rho}$ (that generates the kernel of $g_{\mid \rho}$). We will then show that the quantities $\Delta_{k,l}^i$ and $d_{k,l}^i$ we defined above can be expressed in terms of the coordinates of the balancing equation in this element $w_i$ (lemma \ref{fibremorphism_lemma_diffidentity}). These expressions will then yield $\delta_{k,l}$ as an alternating sum where everything except the $w_i$-coefficients of the vertices at the leaves $k$ and $l$ cancels out.
\end{remark}

\begin{lemma}\label{fibremorphism_lemma_bijection}
 Let $\rho \in \curlyt$ be a cell such that $g(\rho) = \tau$ and $g_{\mid \rho}$ is not injective. Then there is a bijection
$$\Pi: \{\rho' > \rho\} \to \{\sigma > \tau\};\; \rho' \mapsto g(\rho')$$
\begin{proof}
 For surjectivity of $\Pi$, let $\sigma > \tau$. Choose elements $p \in \relint(\tau), q \in \relint(\sigma)$. By lemma \ref{fibremorphism_lemma_homeo}, $t_{q,p}^{-1}(g^*(p) \cap \rho)$ is a line segment. Let $\rho'$ be any cell containing an infinite subset of this. In particular, $g(\rho') = \sigma$. Then we can use the last statement of \ref{fibremorphism_lemma_homeo} to see that we must have $\rho' > \rho$.

For injectivity, assume that $g(\rho_1') = g(\rho_2') = \sigma > \tau$ for two distinct $\rho_i' > \rho$. Then $t_{q,p}(\betrag{g^*(q)} \cap \rho_i') = \betrag{g^*(p)} \cap \rho$ for $i=1,2$, which is a contradiction to the fourth statement of \ref{fibremorphism_lemma_homeo}.
\end{proof} 
\end{lemma}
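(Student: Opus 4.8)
The plan is to transport the edge of the fibre sitting inside $\rho$ between fibres over $\tau$ and over the various maximal cells $\sigma > \tau$, using the degeneration maps $t_{q,p}$ supplied by lemma \ref{fibremorphism_lemma_homeo}. First I fix dimensions: since $g_{\mid\rho}$ is not injective its fibres are one-dimensional line segments (lemma \ref{fibremorphism_lemma_homeo}), so $\dim\rho = \dim\tau + 1 = \dim B$; as $\dim T = \dim B + 1$, the cell $\rho$ is codimension one in $T$ and $\tau$ is codimension one in $B$, whence $\{\rho' > \rho\}$ and $\{\sigma > \tau\}$ are both sets of maximal cells. For a maximal $\rho' > \rho$, local surjectivity of $g$ forces $\lambda_{g\mid\rho'}$ to surject onto the full star of $B$, so $g(\rho')$ is maximal; since it contains $g(\rho) = \tau$ it lies in $\{\sigma > \tau\}$, and $\Pi$ is well-defined. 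The technical identity used throughout is that for any maximal $\rho' > \rho$ one has $\rho' \cap g^{-1}(\tau) = \rho$: the left-hand side is the face of $\rho'$ lying over $\tau$, of dimension $\dim\rho' - 1 = \dim\rho$, it contains $\rho$, and a face contained in a face of the same dimension coincides with it.

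Now fix $p \in \relint\tau$ and, for a given $\sigma > \tau$, a point $q \in \relint\sigma$, and write $E := \betrag{g^*(p)} \cap \rho$. This is an edge of the fibre over $p$ (nonempty because $g_{\mid\rho}$ is non-injective), and by lemma \ref{fibremorphism_lemma_homeo} it lies in $\relint\rho$. For surjectivity I use that $t_{q,p}\colon g^*(q) \to g^*(p)$ is surjective, so some non-collapsed edge $e$ of $g^*(q)$ maps onto $E$; letting $\rho'$ be the maximal cell containing $e$ gives $g(\rho') = \sigma$. Part (5) of lemma \ref{fibremorphism_lemma_homeo} then forces $E = t_{q,p}(e) \subseteq \rho'$, and since $E \subseteq \relint\rho$ this places an interior point of $\rho$ into the face $\rho' \cap g^{-1}(\tau)$; by the identity above that face equals $\rho$, so $\rho' > \rho$ and $\Pi(\rho') = \sigma$.

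For injectivity I suppose $\rho_1' \neq \rho_2'$ are maximal cells above $\rho$ with $g(\rho_1') = g(\rho_2') = \sigma$, and set $e_i := \betrag{g^*(q)} \cap \rho_i'$, two distinct edges of $g^*(q)$. Combining $\rho_i' \cap g^{-1}(\tau) = \rho$ with part (5) shows each image $t_{q,p}(e_i)$ lands in $\betrag{g^*(p)} \cap \rho = E$; because $V_\rho \subseteq V_{\rho_i'}$ and both kernels are one-dimensional, $\ker(g_{\mid\rho}) = \ker(g_{\mid\rho_i'})$, so $e_i$ and $E$ share a primitive direction and part (3) guarantees that $t_{q,p}$ carries each $e_i$ bijectively onto $E$ rather than collapsing it. Then $t_{q,p}(e_1) \cap t_{q,p}(e_2) = E$ has infinitely many points, contradicting part (4).

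I expect the main obstacle to be exactly this bookkeeping of faces rather than any deeper idea: verifying the identity $\rho_i' \cap g^{-1}(\tau) = \rho$, checking that $E$ really lies in $\relint\rho$, and ruling out that the relevant edges collapse under $t_{q,p}$. Once these points are secured, parts (3), (4) and (5) of lemma \ref{fibremorphism_lemma_homeo} deliver both surjectivity and injectivity with no further work.
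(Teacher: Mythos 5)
Your proposal is correct and takes essentially the same route as the paper: both prove surjectivity by lifting the fibre edge $E=\betrag{g^*(p)}\cap\rho$ through $t_{q,p}$ and concluding $\rho'>\rho$ via part (5) of lemma \ref{fibremorphism_lemma_homeo}, and injectivity by showing two distinct cells over $\rho$ would give two distinct edges of $g^*(q)$ whose images both fill the segment $E$, contradicting part (4) --- your added bookkeeping (the identity $\rho'\cap g^{-1}(\tau)=\rho$) only makes explicit what the paper leaves implicit. Two harmless imprecisions: only the relative interior of $E$ lies in $\relint\rho$ (its endpoints sit on proper faces of $\rho$), and part (3) by itself does not rule out that $t_{q,p}$ collapses $e_i$ --- but non-collapse follows at once from your own identity, since the target piece $\betrag{g^*(p)}\cap\rho_i'$ equals the one-dimensional $E$.
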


\begin{lemma}\label{fibremorphism_lemma_normalvectors}
 Let $\rho \in \curlyt$ be a cell such that $q_i \in \rho$ and $\ker g_{\mid V_\rho} = \gnrt{w_i}$; i.e.\ $\rho$ contains (part of) the $i$-th edge. Then for any $\rho' > \rho$ we have $u_{\rho' / \rho} = q_{i,l}^\sigma - q_i$.

Similarly, if $\ker g_{\mid V_\rho} = \gnrt{w_{i-1}}$, then $u_{\rho'/\rho} = q_{i,k}^\sigma - q_i$.
\begin{proof}
We only consider the first case, since the second case is exactly analogous. By lemma \ref{fibremorphism_lemma_bijection}, there is a bijection
$$\Pi: \{\rho' > \rho\} \to \{\sigma > \tau\};\; \rho' \mapsto g(\rho').$$
Also, since $\bar{\lambda}_g$ is surjective, we have the following isomorphisms:
\begin{align*}
\Lambda_{\rho'} &\cong \Lambda_{g(\rho')} \times \gnrt{w_i} \textnormal{ for all } \rho'> \rho,\\
\Lambda_{\rho} &\cong \Lambda_{\tau} \times \gnrt{w_i} \\
\Longrightarrow\; \faktor{\Lambda_{\rho'}}{\Lambda_{\rho}} &\cong \faktor{\Lambda_{g(\rho')}}{\Lambda_\tau}.
\end{align*}
Since for any $\sigma > \tau$, $t_{v_\sigma,p_0}(q_{i,l}^\sigma) = q_i$ and the map preserves polyhedra, both vertices are contained in a common polyhedron which must be a face of $\rho' := \Pi^{-1}(\sigma)$. Hence $q_{i,l}^\sigma - q_i$ is a representative of $u_{\sigma' / \rho'} = (u_{\sigma/\tau},0) = (v_\sigma - p_0,0)$ using the isomorphism above.
\end{proof}
\end{lemma}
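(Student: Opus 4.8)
The plan is to transport the combinatorial picture through the bijection $\Pi$ of Lemma \ref{fibremorphism_lemma_bijection}. Fix $\rho' > \rho$ and set $\sigma := g(\rho') = \Pi(\rho') > \tau$. I will exhibit $q_{i,l}^\sigma - q_i$ as a representative of $u_{\rho'/\rho}$ by checking three things: that it is an honest lattice vector of $V_{\rho'}$, that its class modulo $V_\rho$ is primitive, and that it points from $\rho$ into $\rho'$. The mechanism tying these together is that $\bar{\lambda}_g$ sends $q_{i,l}^\sigma - q_i$ to $v_\sigma - p_0$, which represents $u_{\sigma/\tau}$ by the choice made in Notation \ref{fibremorphism_notation}.

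First I would record the lattice bookkeeping already sketched in the statement. Since $g(\rho) = \tau$ and $\ker g_{\mid V_\rho} = \gnrt{w_i}$ with $w_i$ a primitive edge direction, the second prefamily axiom gives a short exact sequence $0 \to \gnrt{w_i}_{\Z} \to \Lambda_\rho \to \Lambda_\tau \to 0$, which splits because $\Lambda_\tau$ is free; hence $\Lambda_\rho \cong \Lambda_\tau \times \gnrt{w_i}$. The identical argument for $\rho'$ (whose kernel is again $\gnrt{w_i}$, as $V_\rho \subseteq V_{\rho'}$ both surject onto the respective images with one-dimensional kernel) gives $\Lambda_{\rho'} \cong \Lambda_\sigma \times \gnrt{w_i}$. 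Comparing the two sequences through the inclusion $\Lambda_\rho \hookrightarrow \Lambda_{\rho'}$, which is the identity on the shared $\gnrt{w_i}$-factor, the snake lemma yields an isomorphism $\Lambda_{\rho'}/\Lambda_\rho \cong \Lambda_\sigma/\Lambda_\tau$ \emph{induced by} $\bar{\lambda}_g$. In particular $\bar{\lambda}_g$ matches primitive generators of the rank-one lattice $\Lambda_{\rho'}/\Lambda_\rho$ with primitive generators of $\Lambda_\sigma/\Lambda_\tau$.

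Next I would show that both $q_i$ and $q_{i,l}^\sigma$ lie in $\rho'$. For $q_i$ this is immediate since $q_i \in \rho \leq \rho'$. For $q_{i,l}^\sigma$ I would use that the bounded edge of $g^*(v_\sigma)$ in direction $w_i$ issuing from $q_{i,l}^\sigma$ is the non-contracted $i$-th edge, so by Lemma \ref{fibremorphism_lemma_homeo}(3) it is mapped by $t_{v_\sigma,p_0}$ bijectively and direction-preservingly onto the $i$-th edge of $g^*(p_0)$, with $q_{i,l}^\sigma \mapsto q_i$. By the construction of $\rho' = \Pi^{-1}(\sigma)$ in Lemma \ref{fibremorphism_lemma_bijection}, the germ of this edge at $q_{i,l}^\sigma$ lies in $\rho'$, so the endpoint $q_{i,l}^\sigma$ lies in the closed cell $\rho'$. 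Equivalently, taking $\eta$ to be the minimal cell containing $q_{i,l}^\sigma$ one has $g(\eta) = \sigma$, whence Lemma \ref{fibremorphism_lemma_homeo}(5) forces $q_i = t_{v_\sigma,p_0}(q_{i,l}^\sigma) \in \eta$ and places both vertices in a common face of $\rho'$. Either way $q_{i,l}^\sigma - q_i \in V_{\rho'}$ is a genuine lattice vector.

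Finally I would compute $\bar{\lambda}_g(q_{i,l}^\sigma - q_i) = g(q_{i,l}^\sigma) - g(q_i) = v_\sigma - p_0$, a representative of $u_{\sigma/\tau}$. Through the isomorphism of the second step this forces the class of $q_{i,l}^\sigma - q_i$ to be primitive in $\Lambda_{\rho'}/\Lambda_\rho$, so it represents $\pm u_{\rho'/\rho}$; the sign is pinned down by noting $g(q_{i,l}^\sigma) = v_\sigma \in \relint(\sigma)$ lies off $\tau$, so $q_{i,l}^\sigma \notin \rho$ and the vector points out of the face $\rho$ into $\rho'$, matching the outward primitive normal. This gives $u_{\rho'/\rho} = q_{i,l}^\sigma - q_i$. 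The case $\ker g_{\mid V_\rho} = \gnrt{w_{i-1}}$ is verbatim the same with the non-contracted $(i-1)$-th edge from $q_{i-1,l}^\sigma$ to $q_{i,k}^\sigma$ replacing the $i$-th edge and $q_{i,k}^\sigma$ replacing $q_{i,l}^\sigma$. I expect the one genuinely delicate point to be the membership $q_{i,l}^\sigma \in \rho'$ of the third paragraph: one must rule out that this interface vertex, sitting between the cluster contracting to $q_i$ and the surviving $i$-th edge, slips into a neighbouring maximal cell. It is precisely the orientation-preserving bijectivity of $t_{v_\sigma,p_0}$ on the non-contracted edge together with the cell-preservation property (5) of Lemma \ref{fibremorphism_lemma_homeo} that excludes this.
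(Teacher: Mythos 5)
Your proposal is correct and takes essentially the same route as the paper's proof: the bijection $\Pi$ from lemma \ref{fibremorphism_lemma_bijection}, the splittings $\Lambda_\rho \cong \Lambda_\tau \times \gnrt{w_i}$ and $\Lambda_{\rho'} \cong \Lambda_{g(\rho')} \times \gnrt{w_i}$ supplied by the second prefamily axiom, and the identification of the class of $q_{i,l}^\sigma - q_i$ in $\Lambda_{\rho'}/\Lambda_\rho$ with $u_{\sigma/\tau}$ via the map induced by $\bar{\lambda}_g$. The only difference is one of detail: you spell out the membership $q_{i,l}^\sigma \in \rho'$ (via the germ of the non-contracted edge, part (5) of lemma \ref{fibremorphism_lemma_homeo}, and uniqueness from $\Pi$) and the primitivity/sign bookkeeping, which the paper compresses into the remark that $t_{v_\sigma,p_0}$ preserves polyhedra.
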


\begin{corollary}\label{fibremorphism_lemma_xisexist}
 For each $k \neq l \in [n]$, each $i = 1,\dots,r$, there exist $\xi_i(k,l), \chi_i(k,l) \in \R$ such that
\begin{align}
 \sum_{j=1}^d \alpha_j (q_i^{(j)} - q_i) &= \sum_{\sigma > \tau} (q_{i,l}^\sigma - q_i) + \xi_i(k,l) \cdot w_i, \label{eq:fibremorphism_eq_xi} \\
 \sum_{j=1}^d \alpha_j (q_i^{(j)} - q_i) &= \sum_{\sigma > \tau} (q_{i,k}^\sigma - q_i) + \chi_i(k,l) \cdot w_{i-1}.\label{eq:fibremorphism_eq_chi}
\end{align}
\begin{proof}
As in lemma \ref{fibremorphism_lemma_normalvectors}, choose $\rho \in \curlyt$ such that $q_i \in \rho$ and $\ker g_{\mid V_\rho} = \gnrt{w_i}$. Then we know that 
$$\sum_{\sigma > \tau} (q_{i,l}^\sigma - q_i) \in V_\rho .$$
Furthermore, by lemma \ref{fibremorphism_lemma_homeo}, $q_i,q_i^{(1)},\dots,q_i^{(d)}$ are all contained in a common face of $\rho$, hence 
$$\sum_{j=1}^d \alpha_j(q_i^{(j)} - q_i) \in V_\rho$$
as well. Since both sums map to the same element $\sum_{\sigma>\tau} (v_\sigma - p_0) = \sum_{j=1}^d \alpha_j (p_j - p_0)$ under $g$, they can only differ by an element from $\ker g_{\mid V_{\rho'}} = \gnrt{w_i}$, which implies the first equation. The second equation follows analogously.
\end{proof}
\end{corollary}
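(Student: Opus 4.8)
The plan is to fix the index $i$ and the two leaves $k,l$, and to produce a single codimension one cell $\rho \in \curlyt$ that simultaneously controls all the vectors appearing in the first equation. Concretely, I would choose $\rho$ with $q_i \in \rho$ and $\ker g_{\mid V_\rho} = \gnrt{w_i}$, i.e.\ $\rho$ contains part of the $i$-th edge of the fibre over $\relint(\tau)$; such a cell exists because the $i$-th edge sweeps out a polyhedron of dimension $\dim\tau+1$ as its base point runs through $\relint(\tau)$. By the dimension count $\dim\rho=\dim\tau+1=\dim T-1$, so $\rho$ is a genuine codimension one cell, and lemma \ref{fibremorphism_lemma_bijection} supplies the bijection $\Pi\colon\{\rho'>\rho\}\to\{\sigma>\tau\}$. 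The whole statement then reduces to showing that the two vectors $\sum_{j=1}^d\alpha_j(q_i^{(j)}-q_i)$ and $\sum_{\sigma>\tau}(q_{i,l}^\sigma-q_i)$ both lie in $V_\rho$ and have the same image under $g$; their difference is then forced into $\ker g_{\mid V_\rho}=\gnrt{w_i}$, which is exactly the asserted multiple of $w_i$ (real, since $w_i$ is primitive and spans the one-dimensional kernel).

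For the left-hand vector I would invoke lemma \ref{fibremorphism_lemma_homeo}: the homeomorphisms $t_{p_j,p_0}$ identify the $i$-th vertices of the fibres over $p_0,p_1,\dots,p_d\in\relint(\tau)$, and by part (5) they keep each such vertex inside the cell carrying it. Hence $q_i,q_i^{(1)},\dots,q_i^{(d)}$ share a common face of $\rho$, so each $q_i^{(j)}-q_i\in V_\rho$ with $g(q_i^{(j)}-q_i)=p_j-p_0$; summing gives an element of $V_\rho$ mapping to $\sum_j\alpha_j(p_j-p_0)$. For the right-hand vector, lemma \ref{fibremorphism_lemma_normalvectors} identifies each $q_{i,l}^\sigma-q_i$ with a representative of the normal vector $u_{\rho'/\rho}$, where $\rho'=\Pi^{-1}(\sigma)$, and each maps under $g$ to $v_\sigma-p_0$; by the choice of the $v_\sigma$ in notation \ref{fibremorphism_notation} the total image is $\sum_{\sigma>\tau}(v_\sigma-p_0)=\sum_j\alpha_j(p_j-p_0)$, the same as before.

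The step I expect to be the crux is showing that $\sum_{\sigma>\tau}(q_{i,l}^\sigma-q_i)$ genuinely lies in $V_\rho$ and not merely in the ambient space, since individually each summand sits in the larger space $V_{\rho'}$. This is where the balancing condition of $T$ at the codimension one cell $\rho$ enters. Because the fibres are smooth curves with weight one edges and $g$ is lattice-surjective, the maximal cells $\rho'>\rho$ carry weight one, so balancing reads $\sum_{\rho'>\rho}u_{\rho'/\rho}=0$ in $V/V_\rho$; via lemma \ref{fibremorphism_lemma_normalvectors} this says precisely that $\sum_{\sigma>\tau}(q_{i,l}^\sigma-q_i)\in V_\rho$. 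Combining the two preceding observations, both vectors then lie in $V_\rho$ and agree modulo $\ker g_{\mid V_\rho}=\gnrt{w_i}$, which yields the required $\xi_i(k,l)\in\R$. The second equation follows verbatim after replacing $\rho$ by a cell with $\ker g_{\mid V_\rho}=\gnrt{w_{i-1}}$ (containing part of the $(i-1)$-th edge) and $q_{i,l}^\sigma$ by $q_{i,k}^\sigma$.
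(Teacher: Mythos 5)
Your proposal is correct and follows essentially the same route as the paper: choose a codimension one cell $\rho$ containing $q_i$ with $\ker g_{\mid V_\rho} = \gnrt{w_i}$, show both sums lie in $V_\rho$ (the left-hand one via lemma \ref{fibremorphism_lemma_homeo}, the right-hand one via lemma \ref{fibremorphism_lemma_normalvectors} together with the balancing condition at $\rho$), observe they have the same image $\sum_{\sigma>\tau}(v_\sigma-p_0)=\sum_j\alpha_j(p_j-p_0)$ under $g$, and conclude their difference lies in $\gnrt{w_i}$. Your version merely makes explicit what the paper leaves implicit in its ``we know that'' step, namely that the membership $\sum_{\sigma>\tau}(q_{i,l}^\sigma-q_i)\in V_\rho$ comes from balancing at $\rho$ with the trivial weights guaranteed by lemma \ref{lambdasurjective}.
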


\begin{remark}\label{fibremorphism_remark_xi}
Since $w_0 = v_k$ is the same for all $l$, it is clear from the equations themselves that $\chi_1(k,l) = \chi_1(k)$ actually only depends on $k$. Similarly, $\xi_r$ only depends on $l$ and if we reverse the path direction, we find that 
$$\chi_1(k) = \chi_1(k,l) = - \xi_r(l,k).$$
\end{remark}

\begin{lemma}\label{fibremorphism_lemma_diffidentity}
 For each $k \neq l \in [n]$ we have
\begin{align*}
 \Delta_{k,l}^i &= \xi_i - \chi_{i+1} \textnormal{ for all } i = 1,\dots, r-1, \\
 d_{k,l}^i &= \chi_i - \xi_i \textnormal{ for all } i = 1,\dots,r.
\end{align*}
\begin{proof}
 If we subtract equation \eqref{eq:fibremorphism_eq_xi} from \eqref{eq:fibremorphism_eq_chi} for $i+1$, we obtain
\begin{align*}
 &\sum_{j=1}^d \alpha_j (\underbrace{(q_{i+1}^{(j)} - q_i^{(j)}) - (q_{i+1} - q_i)}_{= (e_i^{(j)} - e_i) \cdot w_i}) \\
= &\sum_{\sigma > \tau} (\underbrace{(q_{i+1,k}^\sigma - q_{i,l}^\sigma) - (q_{i+1} - q_i)}_{= (e_i^\sigma - e_i) \cdot w_i}) + (\chi_{i+1} - \xi_i) \cdot w_i .
\end{align*}
Factoring out $w_i$ we obtain
$$0 = \Delta_{k,l}^i - \xi_i + \chi_{i+1}.$$
For the second equation let $i \in \{1,\dots, r\}$ be arbitrary. Since $g^*(p_0)$ is a smooth curve, it is locally at $q_i$ isomorphic to $L_1^{\textnormal{val}(q_i)}$. Denote by $z_1,\dots, z_s$ the direction vectors of the outgoing edges, w.l.o.g.\ $z_1 = - w_{i-1}, z_s = w_i$. Now each edge $E$ in the preimage of $q_i$ under $t_{v_\sigma,p_0}$ induces a partition of the set $\{1,\dots ,s\} = I_E \mathaccent\cdot\cup I_E^c$ such that $x,y \in \{1,\dots,s\}$ are contained in the same set if and only if the path from $z_x$ to $z_y$ does not pass through $E$ (i.e.\ we separate the $z_i$ ``on one side of $E$'' from the others). It is easy to see that, due to the balancing condition of the curve, the direction vector of $E$ must be
$$w_E = \pm \sum_{x \in I_E} z_x = \mp \sum_{y \in I_E^c} z_y,$$
depending on the choice of orientation. One can, for example, see this by induction on the number of edges. Now assume $E$ lies on the path from $k$ to $l$; i.e.\ in $t_{v_\sigma,p_0}^{-1}(q_i)$ it lies on the path from $q_{i,k}^\sigma$ to $q_{i,l}^\sigma$. Choose $I_E$ such that $1 \notin I_E \ni s$, i.e.\ $w_E$ points towards $l$. 
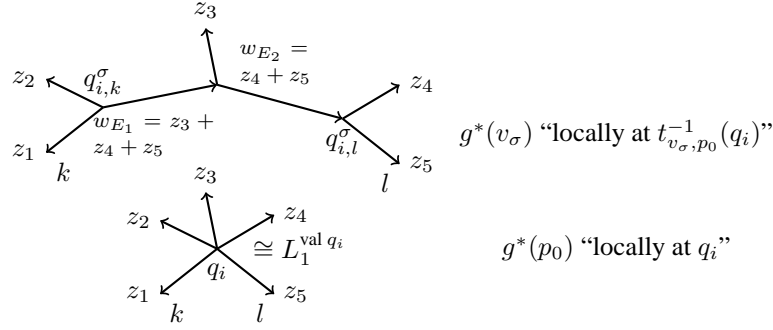
\begin{figure}[h]
 \centering
 \begin{tikzpicture}[scale = 0.75]
    \node (qi) at (0,0) {};
    \node (z1) at (-1,-0.8) {};
    \node (z2) at (-1,0.5) {};
    \node (z3) at (-0.2,1) {};
    \node (z4) at (1,0.6) {};
    \node (z5) at (1,-0.8) {};

    \node (qik) at (-2,2.5) {};
    \node (qim) at (0,2.9) {};
    \node (qil) at (2.2,2.3) {};

  \begin{scope}[thick]
    
    \foreach \x in {(z1.center),(z2.center),(z3.center),(z4.center),(z5.center)}
      \draw[->] (qi.center) -- \x;


    \draw[->] (qik.center) node[above]{$q_{i,k}^\sigma$} --
	  (qim.center) node[midway,below = 3pt,font = \footnotesize,
			    text width = 50pt]{$w_{E_1} = z_3 + z_4 + z_5$}; 
    \draw[->] (qim.center) -- 
	  (qil.center) node[below]{$q_{i,l}^\sigma$}
		       node[midway,above = 3pt,font = \footnotesize,
			  text width = 30pt]{$w_{E_2} = z_4 + z_5$};
    \draw[->] (qik.center) -- ($(qik) + (z1)$.center) node[left]{$z_1$} node[below right]{$k$};
    \draw[->] (qik.center) -- ($(qik) + (z2)$.center) node[left]{$z_2$};
    \draw[->] (qim.center) -- ($(qim) + (z3)$.center) node[above]{$z_3$};
    \draw[->] (qil.center) -- ($(qil) + (z4)$.center) node[right]{$z_4$};
    \draw[->] (qil.center) -- ($(qil) + (z5)$.center) node[right]{$z_5$} node[below left]{$l$};

  \end{scope}


  \draw (qi) node[below = 2pt] {$q_i$};
  \draw (qi) node[right = 10pt] {$\cong L_1^{\textnormal{val }q_i}$};
  \draw (7,0) node{$g^*(p_0)$ ``locally at $q_i$''};
  \draw (7,2) node{$g^*(v_\sigma)$ ``locally at $t_{v_\sigma,p_0}^{-1}(q_i)$''};
  \draw (z1) node[left]{$z_1$} node[below right]{$k$};
  \draw (z2) node[left]{$z_2$};
  \draw (z3) node[above]{$z_3$};
  \draw (z4) node[right]{$z_4$};
  \draw (z5) node[right]{$z_5$} node[below left]{$l$};

 \end{tikzpicture}
 \caption{The direction vector of an edge is determined by the $z_i$ lying ``behind'' it.}
\end{figure}
Denote by $E_1^\sigma,\dots,E_{r_{i,\sigma}}^\sigma$ the sequence of edges from $q_{i,k}^\sigma$ to $q_{i,l}^\sigma$. Subtracting equation \eqref{eq:fibremorphism_eq_xi} from \eqref{eq:fibremorphism_eq_chi} for the same $i$, we obtain
\begin{align*}
 0 =& \sum_{\sigma > \tau}(q_{i,l}^\sigma - q_{i,k}^\sigma) + \xi_i \cdot w_i - \chi_i \cdot w_{i-1} \\
=& \sum_{\sigma > \tau} \left(\sum_{t=1}^{r_{i,\sigma}} e_{i,t}^\sigma \cdot w_{E_t}\right) + \xi_i \cdot z_s + \chi_i \cdot z_1 \\
=& z_s \cdot \left(\sum_{\sigma > \tau} \left( \sum_{t=1}^r e_{i,t}^\sigma \right)\right) 
  + \underbrace{\sum_{\sigma > \tau} \left(\sum_{t=1}^r e_{i,t}^\sigma \left(\sum_{x \in I_{E_t}\setminus\{s\}} z_x\right) \right)}_{:= R, \textnormal{ contains neither } z_1 \textnormal{ nor } z_s}\\
  &+ \xi_i \cdot z_s + \chi_i \cdot z_1 \\
=& z_s \cdot (d_{k,l}^i + \xi_i) - \chi_i \left( \sum_{x \neq 1} z_x \right) + R .
\end{align*}
Since $z_1$ does no longer appear in this equation and $\{z_x, x \neq 1\}$ is linearly independent by smoothness, the coefficient of $z_s$ must be 0:
$$0 = d_{k,l}^i + \xi_i - \chi_i.$$
\end{proof}
\end{lemma}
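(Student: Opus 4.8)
The plan is to derive both identities purely from the two relations \eqref{eq:fibremorphism_eq_xi} and \eqref{eq:fibremorphism_eq_chi} of corollary \ref{fibremorphism_lemma_xisexist} by forming suitable differences and then comparing coefficients along the edge directions $w_i$. The telescoping relations between consecutive vertices recorded in notation \ref{fibremorphism_notation} will let me reduce each vector identity to a scalar one.

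First I would establish $\Delta_{k,l}^i = \xi_i - \chi_{i+1}$ by subtracting equation \eqref{eq:fibremorphism_eq_xi} for index $i$ from equation \eqref{eq:fibremorphism_eq_chi} for index $i+1$ (observing that $w_{(i+1)-1} = w_i$). Using $q_{i+1}^{(j)} = q_i^{(j)} + e_i^{(j)} w_i$, $q_{i+1} = q_i + e_i w_i$ and $q_{i+1,k}^\sigma = q_{i,l}^\sigma + e_i^\sigma w_i$, both sides of the resulting equation collapse into scalar multiples of the single vector $w_i$. Since $w_i \neq 0$ I may divide it out, which yields $\sum_{\sigma>\tau}(e_i^\sigma - e_i) - \sum_{j} \alpha_j(e_i^{(j)} - e_i) = \xi_i - \chi_{i+1}$; the left-hand side is exactly $\Delta_{k,l}^i$ by definition. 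This step is purely formal once the telescoping substitutions are inserted.

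The identity $d_{k,l}^i = \chi_i - \xi_i$ carries the real content. Here I would subtract \eqref{eq:fibremorphism_eq_xi} from \eqref{eq:fibremorphism_eq_chi} for the \emph{same} index $i$, so that the left-hand sides cancel completely and I am left with $0 = \sum_{\sigma>\tau}(q_{i,l}^\sigma - q_{i,k}^\sigma) + \xi_i w_i - \chi_i w_{i-1}$. The vector $q_{i,l}^\sigma - q_{i,k}^\sigma$ is the sum $\sum_t e_{i,t}^\sigma w_{E_t^\sigma}$ of the edge vectors resolving the vertex $q_i$ in the fibre over $v_\sigma$. The crucial geometric input is that, since $g^*(p_0)$ is smooth and hence locally isomorphic to $L_1^{\textnormal{val}(q_i)}$ at $q_i$ with local edge directions $z_1,\dots,z_s$, the direction of each resolved edge $E$ equals the signed sum $\sum_{x \in I_E} z_x$ of the local directions lying ``behind'' it; this I would prove by induction on the number of edges using the balancing condition of the curve. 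Orienting each edge towards the leaf $l$, so that $s \in I_E$ and $1 \notin I_E$ with $z_1 = -w_{i-1}$ and $z_s = w_i$, the coefficient of $z_s$ in $\sum_{\sigma>\tau}(q_{i,l}^\sigma - q_{i,k}^\sigma)$ equals precisely $d_{k,l}^i$, while the remaining terms all lie in the span of $\{z_x : x \neq 1, s\}$.

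Finally I would substitute $w_i = z_s$ and $w_{i-1} = -z_1$ into the identity above and use the balancing relation $z_1 = -\sum_{x \neq 1} z_x$ to eliminate $z_1$. The whole expression then takes the form $z_s(d_{k,l}^i + \xi_i - \chi_i)$ plus a vector contained in the span of $\{z_x : x \neq 1, s\}$. Because $g^*(p_0)$ is smooth, the set $\{z_x : x \neq 1\}$ is linearly independent, so the coefficient of $z_s$ must vanish, giving $d_{k,l}^i = \chi_i - \xi_i$. I expect the main obstacle to be this second identity, and within it the claim that a resolved edge points in the direction of the signed sum of the leaf directions on one side of it: this is precisely where the smoothness and balancing of the fibre are genuinely used, whereas the first identity and all the length bookkeeping are routine.
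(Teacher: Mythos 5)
Your proposal is correct and follows the paper's own proof essentially step for step: the first identity by subtracting \eqref{eq:fibremorphism_eq_xi} from \eqref{eq:fibremorphism_eq_chi} at index $i+1$ and factoring out $w_i$ after the telescoping substitutions, and the second by the same-index difference combined with the balancing-based decomposition $w_{E}=\sum_{x\in I_{E}}z_x$ of the resolved edge directions and the linear independence of $\{z_x : x\neq 1\}$ coming from smoothness. There is nothing to add or correct.
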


\begin{proof}[Proof of proposition \ref{fibremorphism_prop_morphism}]
 By equation \eqref{eq:fibremorphism_eq_diff} and lemma \ref{fibremorphism_lemma_diffidentity} we have 
\begin{align*}
 \delta_{k,l}(\tau) &= \sum_{i=1}^{r-1} (d_{k,l}^i + \Delta_{k,l}^i) + d_{k,l}^r \\
		    &= \chi_1(k,l) - \xi_r(k,l) \\
		    &\stackrel{\ref{fibremorphism_remark_xi}}{=} \chi_1(k,l) + \chi_1(l,k) \\
		    &\stackrel{\ref{fibremorphism_remark_xi}}{=} \chi_1(k) + \chi_1(l).
\end{align*}
Hence $$(\delta_{k,l}(\tau))_{k<l} = \phi_n((\chi_1(r))_{r=1,\dots,n}).$$
\end{proof}

Before concluding this section we want to see that the second condition in our definition of prefamilies is really only necessary for cells on which $g$ is not injective (cf. remark \ref{families_remark_second}). Therefore, we first notice that the proofs of lemma \ref{fibremorphism_lemma_homeo} and lemma \ref{fibremorphism_lemma_bijection} do not use the second prefamily axiom.

\begin{lemma}
\label{lambdasurjective}
Let $B$ be a smooth variety and let $g:T\rightarrow B$ be a locally surjective morphism of tropical varieties all of whose fibres are smooth rational curves with $n$ unbounded edges. Let $\tau \in \curlyt$ be a cell on which $g$ is not injective. Then $\lambda_{g\mid\tau}:\Lambda_{\tau}\rightarrow\Lambda_{g(\tau)}$ is surjective. Moreover, all maximal cells in $\curlyt$ have trivial weight $1$.
\end{lemma}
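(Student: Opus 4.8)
The plan is to prove both statements simultaneously via a local computation at a generic point of a maximal cell, and then to descend to $\tau$ by a dimension count. Throughout I would fix polyhedral structures $\curlyt,\curlyb$ of $T,B$ with $\curlyb=\{g(\sigma):\sigma\in\curlyt\}$.

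\emph{Local picture on a maximal cell.} First I would pick a maximal cell $\sigma\geq\tau$ of $T$ and a point $p\in\relint(\sigma)$. Since $|\Star_T(p)|=V_\sigma$ is a linear space, its image under the linear map $\lambda_{g,p}$ is the linear space $V_{g(\sigma)}$; local surjectivity forces this to equal $|\Star_B(g(p))|$, so the latter is linear and hence $g(\sigma)$ is a maximal cell of $B$. Thus $\lambda_{g\mid\sigma}\colon V_\sigma\to V_{g(\sigma)}$ is a surjective integer-linear map with one-dimensional kernel $\R w$, where $w\in\Lambda$ is the primitive direction vector of the fibre edge of $g^*(g(p))$ through $p$.

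\emph{The weight computation (main obstacle).} The technical heart is to compute the weight of this fibre edge. By the locality of the pull-back of a point (cf.\ the proof of Lemma \ref{settheoreticfibre}), near $p$ the fibre $g^*(g(p))$ equals $\lambda_{g,p}^*(0)$, computed on the smooth star $\Star_B(g(p))=V_{g(\sigma)}$, which carries weight $1$ because $B$ is smooth. Writing the origin of $V_{g(\sigma)}\cong\R^m$ as $\max\{0,y_1\}\cdots\max\{0,y_m\}\cdot V_{g(\sigma)}$ and choosing Smith-normal-form bases of $\Lambda_\sigma$ and $\Lambda_{g(\sigma)}$ with $\lambda_{g\mid\sigma}(e_i)=d_i f_i$ for $i\leq m$ and $\lambda_{g\mid\sigma}(e_{m+1})=0$ (so $e_{m+1}$ spans $\R w$), the pull-back becomes $\prod_{i=1}^m\max\{0,d_ix_i\}\cdot(\omega_{\curlyt}(\sigma)\,V_\sigma)=\big(\textstyle\prod_i d_i\big)\,\omega_{\curlyt}(\sigma)\cdot\R w$. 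Hence the fibre edge has weight $d\cdot\omega_{\curlyt}(\sigma)$ with $d:=[\Lambda_{g(\sigma)}:\lambda_{g\mid\sigma}(\Lambda_\sigma)]$. As $g^*(g(p))$ is a smooth curve all of whose edges have weight $1$, we obtain $d\cdot\omega_{\curlyt}(\sigma)=1$; since both factors are positive integers, $\omega_{\curlyt}(\sigma)=1$ --- which is the second assertion --- and $d=1$, i.e.\ $\lambda_{g\mid\sigma}(\Lambda_\sigma)=\Lambda_{g(\sigma)}$.

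\emph{Descent to $\tau$.} Finally I would transfer surjectivity from $\sigma$ to $\tau$. As $g$ is not injective on $\tau$ and the fibre in $\tau$ is a positive-dimensional subset of a curve, $\dim\ker\lambda_{g\mid\tau}=1$; since $\ker\lambda_{g\mid\tau}=V_\tau\cap\R w$, this forces $\R w\subseteq V_\tau$ and $\dim V_\tau=\dim V_{g(\tau)}+1$. Because also $\ker\lambda_{g\mid\sigma}=\R w\subseteq V_\tau\subseteq\lambda_{g\mid\sigma}^{-1}(V_{g(\tau)})$, comparing dimensions yields $V_\tau=\lambda_{g\mid\sigma}^{-1}(V_{g(\tau)})$. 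Now any $u\in\Lambda_{g(\tau)}\subseteq\Lambda_{g(\sigma)}=\lambda_{g\mid\sigma}(\Lambda_\sigma)$ admits a preimage $a\in\Lambda_\sigma$; then $a\in\lambda_{g\mid\sigma}^{-1}(V_{g(\tau)})=V_\tau$, so $a\in V_\tau\cap\Lambda=\Lambda_\tau$ and $\lambda_{g\mid\tau}(a)=u$. Thus $\lambda_{g\mid\tau}$ is surjective, and the whole lemma follows once the weight identity of the middle step is established.
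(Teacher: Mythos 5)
Your proof is correct, but it takes a genuinely different route from the paper. The paper first reduces to connected $B$, invokes irreducibility of $B$ to combine with the bijection of lemma \ref{fibremorphism_lemma_bijection} and obtain a single global constant $\lambda$ with $\omega_{\curlyt}(\sigma)=\lambda\cdot\omega_{\curlyb}(g(\sigma))$, introduces scaling factors $\lambda_\tau$ for the images of normal vectors (using that $B$ is locally a matroid variety), and then pins down $\lambda=\lambda_\beta=1$ by comparing the explicit weight formulas in the intersection products $\varphi_1\cdots\varphi_{\dim B}\cdot B=\{0\}$ and $g^*\varphi_1\cdots g^*\varphi_{\dim B}\cdot T$ at a fibre edge; lattice surjectivity on non-injective cells is extracted from the normal-vector identity $g(v_{\sigma/\tau})=v_{g(\sigma)/g(\tau)}$. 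You instead argue purely locally at each maximal cell: local surjectivity forces $g(\sigma)$ to be maximal with $\Star_B(g(p))$ a weight-$1$ linear space, and your Smith-normal-form computation exhibits the fibre-edge multiplicity as $d\cdot\omega_{\curlyt}(\sigma)$ with $d=[\Lambda_{g(\sigma)}:\lambda_{g\mid\sigma}(\Lambda_\sigma)]$, so the weight-$1$ property of smooth curves kills both factors at once; surjectivity on an arbitrary non-injective cell $\tau$ then follows by your elementary lattice descent ($\dim\ker\lambda_{g\mid\tau}=1$ via lemma \ref{settheoreticfibre}, then $V_\tau=\lambda_{g\mid\sigma}^{-1}(V_{g(\tau)})$ by dimension count). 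Your approach buys independence from the connectedness/irreducibility reduction and from lemma \ref{fibremorphism_lemma_bijection}, and it makes transparent that the lattice index is exactly the fibre-edge multiplicity; the paper's approach buys the stronger normal-vector statement along the way and reuses machinery already in place (lemma \ref{max}, the decomposition from the proof of lemma \ref{settheoreticfibre}). Both arguments rest on the same core facts, which you correctly identify and which the paper itself uses in proposition \ref{weight1}: locality of the pull-back of a point and the triviality of all weights on smooth varieties and smooth curves.
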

\begin{proof}
%
%

We assume without loss of generality that $B$ is connected. As this implies that $B$ is irreducible (cf.\ \cite{francoisrau}*{lemma 2.4}) the bijection of lemma \ref{fibremorphism_lemma_bijection} implies that there is an integer $\lambda$ such that 
$\omega_{\curlyt}(\sigma)=\lambda\cdot\omega_{\curlyb}(g(\sigma))$ for all maximal cells in $\sigma\in\curlyt$. We thus need to show that $\lambda=1$ and that $g(v_{\sigma/\tau})=v_{g(\sigma)/g(\tau)}$ if $g$ is not injective on $\tau$, i.e.\ $g$ maps normal vectors to normal vectors. It is clear that $g(v_{\sigma/\tau})$ is a multiple of $v_{g(\sigma)/g(\tau)}$; as $B$ is a matroid variety, it follows that $g(v_{\sigma/\tau})=\lambda_{\tau}\cdot v_{g(\sigma)/g(\tau)}$ for some $\lambda_{\tau}\in\Z_{>0}$ which does not depend on $\sigma$. Let $\varphi_1\ldots,\varphi_{\dim(B)}$ be rational functions with $\varphi_1\cdots\varphi_{\dim(B)}\cdot B=\{0\}$ (cf.\ proof of lemma \ref{settheoreticfibre}).
Comparing the weight formulas for intersection products of $\omega_{\varphi_1\cdots\varphi_{\dim(B)}\cdot B}(\{0\})$ and $\omega_{g^*\varphi_1\cdots g^*\varphi_{\dim(B)}\cdot T}(\tau)$ for an edge $\tau\in\curlyt$, we see that $\lambda=1$ and $\lambda_{\beta}=1$ for all cells $\beta\geq\tau$.
\end{proof}

\section{Equivalence of families}\label{section_equivalence}

In the classical case, two families $T \stackrel{g}{\to} B, T' \stackrel{g'}{\to} B$ are equivalent if there is an isomorphism $\psi: T \to T'$ that commutes with the morphisms and markings. Such an isomorphism hence automatically induces isomorphisms between the fibres $g^*(p)$ and $g'^*(p)$ of a point $p$ in $B$.  

Recall that we call two families equivalent if their fibres over each point agree. We would like to show the existence of such an isomorphism $\psi:T\rightarrow T'$ for any two equivalent families. In fact, requiring $\psi$ to identify the fibres already uniquely fixes the map $\psi$, so for any two equivalent families of $n$-marked tropical curves we obtain a bijective map $T \to T'$ that commutes with $g,g'$ and the markings by identifying the fibres over each point $p$ (which are isomorphic by definition). We would like to see if this map is in fact a morphism. Again, we will only be able to show that it is a pseudo-morphism and since in general we can not assume $T$ to be smooth, we cannot give a stronger statement.

\begin{definition}
 Let $T \stackrel{g}{\to} B, T' \stackrel{g'}{\to} B$ be two equivalent families of $n$-marked tropical curves. Now for each point $p$ in $B$ there is a unique isomorphism of tropical curves $$\psi_p: g^*(p) \to g'^*(p)$$
(i.e.\ it identifies equally marked leaves and is linear of slope 1 on each edge). We define a map 
\begin{align*}
 \psi: T &\to T' \\
       t &\mapsto \psi_{g(t)}(t).
\end{align*}
\end{definition}

\begin{theorem}\label{equivalencethm}
 The map $\psi$ is a bijective pseudo-morphism whose inverse is also a pseudo-morphism. In particular, if $T$ or $T'$ is smooth, $\psi$ is an isomorphism.
\begin{proof}
 Since the construction of $\psi$ is symmetric, it is clear that the inverse of $\psi$ is a pseudo-morphism if $\psi$ itself is one. Also, by proposition \ref{fibremorphism_prop_smoothpseudo}, it is an isomorphism if any of $T$ or $T'$ is smooth.

First, we prove that $\psi$ is piecewise integer affine linear: Let $\tau\in\curlyt$ and choose $t\in \tau, t' \in \relint (\tau)$. Again, it suffices to show that $\psi$ is affine linear on the line segment $\conv \{t,t'\}$.

By lemma \ref{fibremorphism_lemma_homeo}, $t$ and $t'$ lie on edges of the corresponding fibres which have the same direction vector $w$. Select vertices $p,p'$ of these edges such that $t = p + \alpha \cdot w, t' = p' + \alpha' \cdot w$ for $\alpha, \alpha' \geq 0$.

Denote by $q := \psi(p), q' := \psi(p')$ and let $\xi$ be the direction vector of the corresponding edge in $T'$. Hence 
\begin{align*}
 \psi(t) = \psi(p + \alpha \cdot w) &= q + \alpha \cdot \xi \\
 \psi(t') = \psi(p' + \alpha' \cdot w) &= q' + \alpha' \cdot \xi
\end{align*}
and using the fact that any convex combination of $p$ and $p'$ must by \ref{fibremorphism_lemma_homeo} again be a vertex, it follows that
\begin{align*}
 \psi(t + \gamma(t' - t)) &= \psi((p + \gamma(p'-p)) + w \cdot (\alpha + \gamma(\alpha' - \alpha))) \\
			  &= (q + \gamma(q' - q)) + \xi \cdot (\alpha + \gamma (\alpha' - \alpha)) \\
			  &= \psi(t) + \gamma(\psi(t') - \psi(t))
\end{align*}
for any $\gamma \in [0,1]$. Hence $\psi$ is affine linear. Using the fact that it has slope 1 on each edge of a fibre and that $g' \circ \psi = g$, it is easy to see that it respects the lattice.

It remains to see that $\psi$ is a pseudo-morphism, so let $\tau$ be a codimension one cell of $T$. We distinguish two cases: 
\begin{itemize}
 \item $g_{\mid \tau}$ is injective: Then $g(\tau)$ is a maximal cell of $B$, so the adjacent maximal cells $\sigma > \tau$ are also mapped to $g(\tau)$. So if we take a point $p \in \relint(\tau)$, the normal vectors $v_{\sigma/\tau} - p$ correspond to normal vectors of the edges of the fibre $g^*(g(p))$ adjacent to $p$ (after proper refinement). Since the fibre is smooth, these add up to 0 and by definition of $\psi$, so do their images $\psi(v_{\sigma/\tau}) - \psi(p)$.
 \item $g_{\mid \tau}$ is not injective: Hence the fibre in $\tau$ over a generic point $p_0 \in g(\tau)$ is contained in the $m$-th edge on the path from some leaf $k$ to some leaf $l$ (it doesn't really matter, which one). Choose $p_0,\dots,p_d,v_\sigma$ in $g(\tau)$ and its adjacent cells $g(\sigma), \sigma > \tau$ as defined in \ref{fibremorphism_notation}. We now use the shorthand notation $q_0, q_j, q_\sigma$ for the $m$-th vertex point of the fibres of $p_0, p_j$ and $v_\sigma$. Now lemma \ref{fibremorphism_lemma_normalvectors} tells us that $q_\sigma - q_0$ is actually a normal vector of $\sigma$ with respect to $\tau$ and that its balancing equation reads
$$\;\;\;\;\;\sum_{\sigma > \tau} (q_\sigma - q_0) = \sum_{j=1}^d \alpha_j (q_j - q_0) - \xi_m^T(k,l)\cdot w_m.$$
Now the image of $q_0$ under $\psi$ is by definition the $m$-th vertex of the fibre $g'^*(p_0)$, so we also get
$$\;\;\;\;\;\;\;\;\;\;\;\;\;\;\;\sum_{\sigma > \tau} (\psi(q_\sigma) - \psi(q_0)) = \sum_{j=1}^d \alpha_j (\psi(q_j) - \psi(q_0)) - \xi_m^{T'}(k,l) \cdot \psi(w_m).$$
Hence, to prove that $\psi$ is a pseudo-morphism, it remains to show that $\xi_m^{T'}(k,l) = \xi_m^{T}(k,l)$.

By the proof of proposition \ref{fibremorphism_prop_morphism}, we know that 
$$\delta_{k,l}(\tau) = \phi_n((\chi_1^T(k))_{k=1,\dots,n}) = \phi_n((\chi_1^{T'}(k))_{k=1,\dots,n}). $$
Since the left side is independent on the choice of family by definition (it is defined only in terms of lengths of fibres) and $\Phi_n$ is injective, we must have $\chi_1^T(k) = \chi_1^{T'}(k)$ for any $k$. Using the fact that $d_{k,l}^i$ and $\Delta_{k,l}^i$ are also independent of the choice of family and applying lemma \ref{fibremorphism_lemma_diffidentity} inductively, we finally see that 
$$\chi_i^T(k,l) = \chi_i^{T'}(k,l) \textnormal{ and } \xi_i^T(k,l) = \xi_i^{T'}(k,l)$$
for any possible $i,k,l$.
\end{itemize}

\end{proof}

\end{theorem}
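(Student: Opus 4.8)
The plan is to dispatch the three assertions in turn, concentrating almost all the effort on a single hard point about the family-independence of the auxiliary quantities from Notation~\ref{fibremorphism_notation}. Bijectivity of $\psi$ is immediate, since each $\psi_p$ is a bijection and the relation $g' \circ \psi = g$ forces $\psi$ to act fibrewise. The construction of $\psi$ is entirely symmetric in $T$ and $T'$, so once $\psi$ is shown to be a pseudo-morphism the identical argument with the roles of the two families interchanged shows $\psi^{-1}$ is one as well; the final clause then follows from Proposition~\ref{fibremorphism_prop_smoothpseudo}, since a pseudo-morphism out of a smooth variety is automatically a morphism and a bijective morphism with morphism inverse is an isomorphism.

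First I would establish that $\psi$ is piecewise integer affine linear. Fixing a cell $\tau \in \curlyt$ and points $t \in \tau$, $t' \in \relint(\tau)$, Lemma~\ref{fibremorphism_lemma_homeo} guarantees that $t$ and $t'$ lie on edges of their respective fibres sharing a common primitive direction $w$. Writing $t = p + \alpha w$ and $t' = p' + \alpha' w$ with $p,p'$ vertices and setting $q := \psi(p)$, $q' := \psi(p')$, the slope-$1$ property of each $\psi_p$ on edges gives $\psi(t) = q + \alpha\xi$ and $\psi(t') = q' + \alpha'\xi$ for the corresponding edge direction $\xi$ in $T'$. Expanding $\psi(t + \gamma(t'-t))$ and using that any convex combination of $p$ and $p'$ is again a vertex (again Lemma~\ref{fibremorphism_lemma_homeo}) shows $\psi$ is affine linear on $\conv\{t,t'\}$; integrality then follows from $g' \circ \psi = g$ together with the slope-$1$ condition.

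The harder half is the balancing condition at a codimension-one cell $\tau$ of $T$, which I would split into two cases. If $g_{\mid\tau}$ is injective, then $g(\tau)$ is maximal in $B$, the adjacent normal vectors $v_{\sigma/\tau}$ correspond to the edge directions of a smooth fibre meeting at a single point, these balance by smoothness, and their images under the fibrewise isomorphism balance too. If $g_{\mid\tau}$ is not injective, the fibre over a generic point lies inside some $m$-th edge, and Lemma~\ref{fibremorphism_lemma_normalvectors} rewrites the balancing condition of $\tau$ in terms of the $m$-th vertex points $q_0, q_j, q_\sigma$ plus a correction term $-\xi_m^T(k,l)\,w_m$. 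Applying $\psi$ carries this to the analogous equation in $T'$ with correction term $-\xi_m^{T'}(k,l)\,\psi(w_m)$, so the pseudo-morphism property reduces entirely to the identity $\xi_m^T(k,l) = \xi_m^{T'}(k,l)$.

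This identity is where I expect the main obstacle to sit. The key leverage is that $\delta_{k,l}(\tau)$, $d_{k,l}^i$ and $\Delta_{k,l}^i$ from Notation~\ref{fibremorphism_notation} are defined purely through edge lengths of the fibres and hence coincide for the two equivalent families. By the computation in the proof of Proposition~\ref{fibremorphism_prop_morphism} one has $\delta_{k,l}(\tau) = \phi_n\big((\chi_1^T(k))_k\big) = \phi_n\big((\chi_1^{T'}(k))_k\big)$, and injectivity of $\phi_n$ forces $\chi_1^T(k) = \chi_1^{T'}(k)$ for every $k$. Substituting this into the recursive relations $\Delta_{k,l}^i = \xi_i - \chi_{i+1}$ and $d_{k,l}^i = \chi_i - \xi_i$ of Lemma~\ref{fibremorphism_lemma_diffidentity} and inducting along the path propagates the equality to all $\chi_i^T = \chi_i^{T'}$ and $\xi_i^T = \xi_i^{T'}$, yielding in particular $\xi_m^T(k,l) = \xi_m^{T'}(k,l)$ and finishing the proof.
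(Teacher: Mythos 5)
Your proposal is correct and follows essentially the same route as the paper's own proof: the same symmetry reduction and appeal to proposition \ref{fibremorphism_prop_smoothpseudo}, the same piecewise integer affine linearity argument via lemma \ref{fibremorphism_lemma_homeo}, the same two-case balancing analysis at codimension-one cells, and the same reduction of the key identity $\xi_m^T(k,l) = \xi_m^{T'}(k,l)$ to the family-independence of $\delta_{k,l}(\tau)$, $d_{k,l}^i$, $\Delta_{k,l}^i$, injectivity of $\phi_n$, and induction along the path using lemma \ref{fibremorphism_lemma_diffidentity}. Your closing remark that a bijective morphism with morphism inverse is an isomorphism glosses over the weight-respecting condition exactly as the paper does, which is harmless here since all maximal cells of a family carry trivial weight $1$ by lemma \ref{lambdasurjective}.
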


\begin{acknowledgement}
Georges Fran\c{c}ois is supported by the Fonds national de la Recherche (FNR), Luxembourg.

Simon Hampe is supported by the Deutsche Forschungsgemeinschaft grant GA 636 / 4-1
\end{acknowledgement}

\begin{bibdiv}
\begin{biblist}

\bib{ak}{article}{
	eprint = {\arxiv{math/0311370v2}},
	author = {Ardila, Frederico},
	author = {Klivans, Caroline J.},
        title = {The Bergman complex of a matroid and phylogenetic trees},
	journal = {J.\ Comb.\ Theory, Ser.\ B},
	volume = {96},
	pages = {38--49},
	year = {2006},
}

\bib{AR}{article}{
  author={Allermann, Lars},
  author={Rau, Johannes},
  title={First steps in tropical intersection theory},
  journal={Math.\ Z.},
  volume={264},
  number={3},
  pages={633--670},
  year={2010},
  eprint={\arxiv {0709.3705v3}},
}

\bib{francois}{article}{
  author={Fran\c{c}ois, Georges},
  title={Cocycles on tropical varieties via piecewise polynomials},
  eprint={\arxiv{1102.4783v2}},
}

\bib{francoisrau}{article}{
  author={Fran\c{c}ois, Georges},
  author={Rau, Johannes},
  title={The diagonal of tropical matroid varieties and cycle intersections},
  eprint={\arxiv{1012.3260v1}},
}

\bib{mapoly}{article}{
	eprint = {\arxiv{math/0411260}},
	author = {Feichtner, Eva Maria},
	author = {Sturmfels, Bernd},
        title = {Matroid polytopes, nested sets and Bergman fans},
	journal = {Port.\ Math.\ (N.S.)},
	volume = {62},
	pages = {437--468},
	year = {2005},
}

\bib{GKM}{article}{
  author={Gathmann, Andreas},
  author={Kerber, Michael},
  author={Markwig, Hannah},
  title={Tropical fans and the moduli spaces of tropical curves},
  journal={Compos.\ Math.},
  volume={145},
  number={1},
  pages={173--195},
  year={2009},
  eprint={\arxiv{0708.2268}},
}

\bib{kapranov}{article}{
  author={Kapranov, Mikhail},
  title={Chow quotients of Grassmannians I},
  journal={Adv.\ Soviet Math.},
  volume={16},
  pages={29--110},
  year={1993},
  eprint={\arxiv{alg-geom/9210002v1}},
}

\bib{psiclasses}{article}{
	eprint = {\arxiv{0709.3953v2}},
	author = {Kerber,Michael},
        author = {Markwig, Hannah},
	title = {Intersecting Psi-classes on tropical $\mathcal{M}_{0,n}$},
	journal = {Int.\ Math.\ Res.\ Notices},
	volume = {2009},
	number ={2},
	pages = {221--240},
	year = {2009},
}

\bib{KV}{book}{
  author={Kock, Joachim},
  author={Vainsencher, Israel},
  title={An Invitation to Quantum Cohomology},
  publisher={Progress in Mathematics 249},
  address={Birkh\"auser Boston},
  pages={159 p.},
  date={2007},
}

\bib{mikh}{article}{
	eprint = {\arxiv{math/0601041v2}},
	author = {Mikhalkin, Grigory},
        title = {Tropical geometry and its applications},
	journal = {Proceedings of the ICM, Madrid, Spain},
	pages = {827--852},
	year = {2006},
}

\bib{disshannes}{thesis}{
  author={Rau, Johannes},
  title={Tropical intersection theory and gravitational descendants},
  type={PhD thesis},
  organization={Technische Universit\"{a}t Kaiserslautern},
  date={2009},
  eprint={\href {http://kluedo.ub.uni-kl.de/volltexte/2009/2370/}{http://kluedo.ub.uni-kl.de/volltexte/2009/2370}},
}

\bib{shaw}{article}{
	eprint = {\arxiv{1010.3967v1}},
	author = {Shaw, Kristin},
        title = {A tropical intersection product in matroidal fans},
}

\bib{troplin}{article}{
	eprint = {\arxiv{math/0410455}},
	author = {Speyer, David},
        title = {Tropical linear spaces},
	journal = {SIAM J.\ Discrete Math.},
	volume = {22},
	pages = {1527--1558},
	year = {2008},
}

\bib{stusolve}{article}{
	author = {Sturmfels, Bernd},
        title = {Solving systems of polynomial equations},
	journal = {CBMS Regional Conferences Series in Mathematics, vol.\ 97, Published for the Conference Board of the Mathematical Sciences, Washington, DC},
	year = {2002},
}

\bib{SS04}{article}{
	eprint = {\arxiv{math/0304218v3}},
	author = {Speyer, David},
        author = {Sturmfels, Bernd},
	title = {The tropical Grassmannian},
	journal = {Adv.\ Geom. },
	pages = {389--411},
	volume = {4},
	year = {2004},
}

\bib{tevelev}{article}{
	eprint = {\arxiv{math/0412329v3}},
	author = {Tevelev, Jenia},
   title = {Compactifications of subvarieties of tori},
	journal = {Amer.\ J.\ Math.},
	pages = {1087--1104},
	volume = {129},
	year = {2007},
}

\end{biblist}
\end{bibdiv}
\end{document}